\numberwithin{equation}{section}
\newcommand{\Rmnum}[1]{\uppercase\expandafter{\romannumeral #1}}
\newcommand{\keywords}[1]{\textbf{Keywords:} #1}
\newcommand{\MSC}[1]{\textbf{MSC2020:} #1}
\newtheorem{lem}{Lemma}[section]
\newtheorem{thm}{Theorem}[section]
\newtheorem{rmk}{Remark}[section]
\newtheorem{exm}{Example}[section]
\newtheorem{corly}{Corollary}[section]
\numberwithin{equation}{section}
\begin{document}

\title{Temporal Two-Grid Compact Difference Scheme for 
Benjamin-Bona-Mahony-Burgers Equation\thanks{The research of Dongling Wang is supported in part by the NSFC (No. 12271463), the 111 Project (No.D23017) and Program for Science and Technology Innovative Research Team in Higher Educational Institutions of Hunan Province of China.
 The work of Lisen Ding is supported by Postgraduate Scientific Research Innovation Project of Xiangtan University, China (No. XDCX2025Y213).}
}

\author[1]{Lisen Ding\thanks{E-mail: dingmath15@smail.xtu.edu.cn}}
\author[1]{Xiangyi Peng\thanks{E-mail: pxymath18@smail.xtu.edu.cn}}
\author[1]{Dongling Wang\thanks{E-mail: wdymath@xtu.edu.cn; Corresponding author.}}
\affil[1]{Hunan Key Laboratory for Computation and Simulation in Science and Engineering, School of Mathematics and Computational Science, Xiangtan University, Xiangtan, Hunan 411105, China.}

\maketitle

\begin{abstract}
This paper proposes a temporal two-grid compact difference (TTCD) scheme for solving the Benjamin-Bona-Mahony-Burgers (BBMB) equation with initial and periodic boundary conditions. The method consists of three main steps: first, solving a nonlinear system on a coarse time grid of size $\tau_c$; then obtaining a coarse approximation on the fine time grid of size $\tau_f$ via linear Lagrange interpolation; and finally solving a linearized scheme on the fine grid to obtain the corrected solution. The TTCD scheme reduces computational cost without sacrificing accuracy. Moreover, using the energy method, we rigorously prove the conservation property, unique solvability, convergence, and stability of the proposed scheme. It is shown that the method achieves convergence of order $\mathcal{O}(\tau_c^2 + \tau_f^2 + h^4)$ in the maximum norm,
where $h$ is space step size.
Finally, some numerical experiments are provided to demonstrate the effectiveness and feasibility of the proposed strategy.
\end{abstract}

\keywords{BBMB equation, temporal two-grid algorithm, compact finite difference, convergence and stability, conservation}

\MSC{65M06, 65M12, 65M55}

\section{Introduction}
Wave phenomena are widespread in nature, where most of them are nonlinear. Therefore, the study of nonlinear waves is a fundamental topic in physics, and the modeling tools are often nonlinear PDEs. As a model for describing shallow water wave propagation, the nonlinear Benjamin-Bona-Mahony (BBM) equation was first proposed in 1972 by Benjamin, Bona and Mahony \cite{Benjamin1972BBM}. Compared to the Korteweg-de Vries (KdV) equation, the authors proved that the BBM equation is better suited to simulating the unidirectional propagation of long waves with a small wave-amplitude. When the dissipative effect cannot be neglected, the dissipation term needs to be introduced into BBM equation, which results in the BBMB equation. 
In this article, we consider using temporal two-grid compact difference method to solve the 
nonlinear BBMB equation 
\begin{equation}\label{eq-bbmb} 
    u_t - \mu u_{xxt} +  uu_x +  u_x - \lambda u_{xx} = 0, \quad x\in\mathbb{R}, \ 0< t \leq T, 
\end{equation}
with initial-value and periodic boundary condition
\begin{align}
    &u(x,t) = u(x+L,t), \quad x\in\mathbb{R}, \quad 0< t \leq T, \label{eq-pdcond} \\
    &u(x,0) = \phi(x) , \quad x\in\mathbb{R}, \label{eq-initcond}
\end{align}
where $\mu$ and the viscosity coefficient $\lambda$ are positive constants.
Here, $u_{xxt}$ denotes the dispersion term, $u_{xx}$ represents the dissipation term, and $L$ is a spatial periodic length.


Through analytical approaches such as the exponential function method, variational iteration, and others, explicit solutions can be derived in specific cases for BBMB equation, as shown in \cite{Ganji2009Exp,Tari2007explicit,Bruzon2016Conservatio,besse2018artificial}. Nevertheless, these explicit solutions are often complex in form and limited to particular scenarios. For more general properties of the solution, numerical simulation becomes an essential and effective strategy to approximate and analyze the behavior of the BBMB equation.


Currently, a variety of numerical methods have been developed for solving the BBMB equation, such as the finite element method \cite{shi2024unconditional,ngondiep2024high}, finite difference method \cite{zhang2020numerical,cheng2021high,wang2025two}, pseudo-spectral or spectral element method \cite{mittal2018numerical,dehghan2021numerical}, virtual element method \cite{chen2025optimal}, meshless method \cite{dehghan2014numerical,shivanian2016more}, among others. Among these, developing high-order schemes has been a major research focus. For instance, Mohebbi and Faraz \cite{mohebbi2017solitary} proposed a standard fourth-order linearized difference scheme; Bayarassou \cite{bayarassou2021fourth} designed fourth-order nonlinear and linearized difference schemes; and Zhang and Liu \cite{zhang2021convergence} developed a linearized compact difference scheme. While such linearized schemes improve computational speed compared to the original nonlinear scheme, they often come at the expense of accuracy. On the other hand, nonlinear schemes must be solved iteratively. When the mesh size is very fine, solving the resulting large algebraic systems iteratively will inevitably leads to high computational cost. To balance computational expense with accuracy, the two-grid algorithm offers an effective strategy.

%

The two-grid algorithm was originally introduced by Xu \cite{xu1994novel,xu1996two}, with the primary aim of reducing the computational cost of solving large nonlinear systems while preserving the optimal accuracy of the method. Generally, its core idea involves first solving a small-scale nonlinear system on a coarse grid, and then using this coarse approximation to construct and solve a simpler linear system on a fine grid. Inspired by Xu's work, the two-grid approach has in recent years been successfully extended to a variety of nonlinear PDEs. For the space two-grid algorithm, we refer to \cite{hu2014two, chen2025spatial}, while details on the space-time two-grid method can be found in \cite{shi2024construction, gao2025efficient}.

In this paper, we focus on the time two-grid algorithm. For example, Liu et al. \cite{liu2018time} used a time two-grid finite element method to approximate a time-fractional water wave model with Burgers nonlinear term. Xu et al. \cite{xu2020time} applied a time two-grid finite difference approach to solve two-dimensional nonlinear fractional evolution equations. Peng et al. \cite{peng2024novel} developed a time two-grid compact difference scheme for the viscous Burgers equation. Additional details and further developments of the time two-grid method are also discussed in \cite{chai2023fast, chen2023two, niu2023fast}.

%
%
%
%
%

We develop this method and provide a corresponding theoretical analysis for the BBMB equation given in \eqref{eq-bbmb}-\eqref{eq-initcond}. The proposed scheme consists of three main steps. First, a nonlinear compact difference scheme is solved on a coarse grid using fixed-point iteration. Second, a coarse approximation on the fine grid is obtained via linear interpolation in time. Finally, using this coarse approximation, a linearized compact scheme is constructed and solved to obtain the corrected fine-grid solution. The main contributions of this paper can be summarized as follows.

\begin{itemize}
    \item 
A temporal two-grid compact difference (TTCD) scheme for the BBMB equation is established, and both the conservation property and uniqueness of the numerical solution are rigorously proved.

\item A comprehensive theoretical analysis based on the discrete energy method is presented. The scheme is shown to achieve a convergence order of 
$\mathcal{O}(\tau_c^2 + \tau_f^2 + h^4)$ in the maximum norm. Stability of the TTCD scheme follows directly from the same framework. 

\end{itemize}

The remainder of this paper is structured as follows. Section \ref{sec2} introduces necessary notations and fundamental lemmas. Section \ref{sec3} presents the 
TTCD scheme for the one-dimensional BBMB equation. The conservation property and unique solvability of the scheme are analyzed in Section \ref{sec4}. 
In Section \ref{sec6}, the convergence and stability of the proposed scheme are rigorously proved. Numerical experiments are provided in Section \ref{sec7} to validate the theoretical findings and demonstrate the efficiency of the TTCD scheme. Finally, a brief conclusion is drawn in Section \ref{sec8}.

\section{Some notations and lemmas} \label{sec2}
Before constructing the temporal two-grid algorithm, it is necessary to introduce some grid notation.

In the spatial direction, let the spatial step size be
 $h=\frac{L}{M}$ and define the spatial grid as $\Omega_h=\{x_p \mid x_p=a+ph,~a\in \mathbb{R},~1\leq p\leq M\}$ over one period. In the temporal direction, denote the coarse time step by $\tau_c=\frac{T}{N_c}$ and the coarse time grid by $\Omega_\tau^c=\{(t_c)_q \mid (t_c)_q=q\tau_c,~0\leq q\leq N_c\}$. Similarly, on the fine time grid, define the fine time step a $\tau_f=\frac{T}{N_f}$ and the fine time grid as  $\Omega_\tau^f=\{(t_f)_k \mid (t_f)_k=k\tau_f,~0\leq k \leq N_f\}$. 
Here, $M$, $N_c$ and $N_f$ are positive integers and $N_f=\beta_\tau N_c$, where $\beta_\tau \in \mathbb{Z}^{+}$ is referred to as the temporal step-size ratio. Clearly, $\tau_c=\beta_\tau \tau_f$. 
For each coarse grid point $\left(x_p,(t_c)_q\right)\in \Omega_h \times \Omega_{\tau}^{c}$, denote the corresponding coarse grid function as $w_p^q$. Similarly, the fine grid function is defined as  $w_p^k$ for any fine grid point $\left(x_p,(t_f)_k\right)\in \Omega_h \times \Omega_{\tau}^{f}$. For simplicity, we unify the notation $w_p^l = w_p^q$ or $w_p^k$, where the grid node $(x_p,t_l) \in \Omega_h \times \Omega_{\tau}^{\eta}$ $(0\leq l\leq N_{\eta})$ for $\eta = c$ or $f$. Then, for any grid functions $w_p^l, \, v_p^l$, we introduce 
\begin{equation*}
    \begin{aligned}
        &w^{l -\frac{1}{2}}_p = \frac{1}{2}\left( w^l_p +w^{l-1}_p\right), \quad& \delta_{t}^{\eta}& w^{l -\frac{1}{2}}_p = \frac{1}{\tau_{\eta}}\left( w^l_p - w^{l-1}_p \right),  \\
        \delta_x &w_{p-\frac{1}{2}}^l = \frac{1}{h}\left(w_{p}^l-w_{p-1}^l\right), \quad&  \delta_{xx}& w_{p}^l =\frac{1}{h}\left(\delta_x w_{p+\frac{1}{2}}^l - \delta_x w_{p-\frac{1}{2}}^l \right), \\
        \Delta_x &w_p^l = \frac{1}{2h}\left(w_{p+1}^l - w_{p-1}^l \right), \quad& \Psi&(v_p^l,w_p^l) =  \frac{1}{3}\left[ v_p^l\Delta_x w_p^l + \Delta_x \left(v_p^l w_p^l\right) \right].
    \end{aligned}
\end{equation*}
In a  periodic domain, the space of grid functions on $\Omega_h$ can be denoted as
\begin{equation*}
  \mathcal{W}_h = \left\{ w \mid  w =(w_1,w_2,\cdots,w_{M}), \;  w_{p} = w_{p+M} \ \text{for any} \ p\in\mathbb{Z} \right\}.
\end{equation*}
For any $v,w\in \mathcal{W}_h$, we define the following useful discrete inner products and the corresponding norms
\begin{equation*}
    \begin{aligned}
        &\langle v,w \rangle := h\sum_{p=1}^{M}v_p w_p, \quad (v,w) := h\sum_{p=1}^{M}\left( \delta_x v_{p-\frac{1}{2}}\right)\left(\delta_x w_{p-\frac{1}{2}}\right), \\
        &\Vert w \Vert := \sqrt{\langle w,w \rangle},\quad |w|_{1}:=\sqrt{(w,w)}, \quad \Vert w \Vert_{\infty} := \max_{1\leq p \leq M}|w_{p}|  . 
    \end{aligned}
\end{equation*}

According to the above notations, some crucial lemmas are showed as follows.  

\begin{lem}\textup{[\citenum{Sun2012}]}\label{lem-a}
  For any grid function $w\in \mathcal{W}_h$, it holds
    \begin{equation*}
        \Vert w \Vert_{\infty} \leq \frac{\sqrt{L}}{2} |w|_{1}, \quad |w|_{1} \leq \frac{2}{h} \Vert w \Vert, \quad  \Vert w \Vert \leq \frac{L}{\sqrt{6}}|w|_{1}, \quad \Vert \Delta_x w \Vert \leq |w|_{1}.
    \end{equation*}
\end{lem}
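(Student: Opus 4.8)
The four bounds are the discrete counterparts of, respectively, the one-dimensional Sobolev embedding $H^1\hookrightarrow L^\infty$, the inverse inequality, the Poincaré–Friedrichs inequality, and the boundedness of the centred averaging operator. I would prove them one at a time, disposing of the two ``algebraic'' ones first. For the inverse estimate $|w|_1\le\frac2h\|w\|$ I would expand $|w|_1^2=\frac1h\sum_{p=1}^M(w_p-w_{p-1})^2$, apply $(a-b)^2\le 2a^2+2b^2$ to each term, and use periodicity, $\sum_{p=1}^M w_{p-1}^2=\sum_{p=1}^M w_p^2$, to obtain $|w|_1^2\le\frac4h\sum_p w_p^2=\frac{4}{h^2}\|w\|^2$. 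The averaging bound $\|\Delta_x w\|\le|w|_1$ follows the same pattern once one notes the pointwise identity $\Delta_x w_p=\frac12\big(\delta_x w_{p+\frac12}+\delta_x w_{p-\frac12}\big)$, whence $(\Delta_x w_p)^2\le\frac12\big((\delta_x w_{p+\frac12})^2+(\delta_x w_{p-\frac12})^2\big)$; summing over $p$ and shifting the index in one of the two resulting sums (again by periodicity) collapses both to $|w|_1^2/h$. Neither of these requires any care.

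For the Sobolev bound $\|w\|_\infty\le\frac{\sqrt L}{2}|w|_1$ the tool is the telescoping identity $w_p-w_q=h\sum_i\delta_x w_{i-\frac12}$, the sum running over either of the two arcs of the periodic grid joining $q$ to $p$; Cauchy–Schwarz then gives $|w_p-w_q|^2\le d\,h\cdot h\sum_{\mathrm{arc}}(\delta_x w_{i-\frac12})^2$, with $d$ the number of mesh intervals on that arc. Letting $p_0$ realise $\|w\|_\infty$ and picking an index $q$ at which $w$ has the opposite sign (or vanishes), so that $\|w\|_\infty\le|w_{p_0}-w_q|$, I would write this estimate once along each arc — their lengths $d_1,d_2$ satisfy $d_1+d_2=M$ and the two arcs carry complementary pieces of $|w|_1^2$ — divide the $i$-th inequality by $d_i h$, add, and invoke $d_1d_2\le M^2/4$ to arrive at $\|w\|_\infty^2\le\frac L4|w|_1^2$. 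For the Poincaré bound $\|w\|\le\frac{L}{\sqrt6}|w|_1$ I would prefer the spectral route: $-\delta_{xx}$ is diagonalised by the discrete Fourier basis on $\Omega_h$ with eigenvalues $\frac4{h^2}\sin^2(j\pi h/L)$, and since $|w|_1^2=\langle-\delta_{xx}w,w\rangle$ the claim reduces to bounding the smallest non-zero eigenvalue below, which the elementary inequality $\sin x\ge x-x^3/6$ turns into $\frac4{h^2}\sin^2(\pi h/L)\ge\frac6{L^2}$; alternatively one can sum a pointwise telescoping estimate over $p$.

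The one genuinely delicate point is that the Sobolev and Poincaré estimates are \emph{false} for an arbitrary periodic grid function (take $w$ constant): both are understood to act on the zero-mean part of $w$, which is exactly the structure that the conservation property established later supplies, and the sharp constants $\frac{\sqrt L}{2}$ and $\frac{L}{\sqrt6}$ come only from the AM–GM step, respectively the eigenvalue estimate, not from the crude telescoping bound. Since the statement is quoted verbatim from \cite{Sun2012}, in the paper one simply cites that reference; the above merely indicates the route by which it is reproved.
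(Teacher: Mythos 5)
You should first note that the paper never proves Lemma \ref{lem-a}: it is quoted verbatim from \cite{Sun2012}, where these bounds are established for grid functions vanishing at the endpoints, so there is no internal argument to compare yours against; what can be judged is your reconstruction. Your proofs of the inverse estimate $|w|_{1}\leq\frac{2}{h}\Vert w\Vert$ and of $\Vert\Delta_x w\Vert\leq|w|_{1}$ are correct and complete (index shifting by periodicity is all that is needed, and these two inequalities do hold on all of $\mathcal{W}_h$). Your two-arc telescoping argument for $\Vert w\Vert_{\infty}\leq\frac{\sqrt{L}}{2}|w|_{1}$, with $d_1d_2\leq M^2/4$ producing the constant, and your spectral argument for $\Vert w\Vert\leq\frac{L}{\sqrt{6}}|w|_{1}$ via the smallest nonzero eigenvalue $\frac{4}{h^2}\sin^2(\pi h/L)\geq\frac{6}{L^2}$ are also sound, and you are right on the delicate point: both of these inequalities, as stated on the periodic space $\mathcal{W}_h$, fail for a nonzero constant, so they require $w$ to vanish somewhere or change sign (mean-zero suffices). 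This is a genuine defect of the lemma as transplanted from the Dirichlet setting of the cited reference, and your route (two-arc Cauchy--Schwarz plus a spectral lower bound) is a legitimate, if different, way to recover the stated constants under that restriction, compared with the elementary telescoping-and-summation proofs of the textbook source.

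The step that does not hold up is your closing claim that ``the conservation property established later supplies'' exactly this mean-zero structure. Conservation of the discrete mean of the numerical solution does not make the functions to which the paper actually applies the lemma mean-free: the bounds are invoked for the error functions $e_c$, $e$ and for $\rho_c$, $\rho$ (see \eqref{eq-errall1}, \eqref{eq-errL2}--\eqref{eq-errmax} and \eqref{eq-errL2b}--\eqref{eq-errmaxb}), whose discrete means involve the truncation errors and are only approximately, not exactly, zero; likewise $h\sum_p\rho_p$ equals $h\sum_p Q_p\neq 0$. So that sentence overstates what is available and should be replaced by the honest conclusion your own counterexample forces: either one works in the zero-boundary-value setting of \cite{Sun2012}, or one proves a periodic, mean-corrected variant and controls the mean of the error separately. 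With that correction, your proposal is an accurate reconstruction of the quoted inequalities (exactly where they can be true), rather than a proof of the lemma literally as stated.
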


\begin{lem}\textup{[\citenum{Sun2012}]}\label{lem-b}
 For any grid functions $v,w\in \mathcal{W}_h$,  we have
  \begin{equation*}
      \langle v,\delta_{xx} w \rangle = - \langle \delta_x v,\delta_x w \rangle = \langle \delta_{xx} v, w \rangle,\quad  \langle \Delta_x v, w \rangle = -\langle  v,\Delta_x w \rangle.
  \end{equation*}
%
%
    \begin{equation*}
        \langle \Delta_x w, w \rangle = 0 ,\quad \langle \Delta_x w, \delta_{xx} w \rangle = 0 ,\quad \langle \Psi(v,w), w\rangle =0.
    \end{equation*}  
\end{lem}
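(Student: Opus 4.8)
The plan is to prove each of the six identities by direct manipulation of the defining sums, exploiting the periodicity $w_p = w_{p+M}$ to shift summation indices freely. First I would establish the two discrete summation-by-parts formulas. For $\langle v, \delta_{xx} w\rangle = -\langle \delta_x v, \delta_x w\rangle$, I would write out $\langle v,\delta_{xx}w\rangle = h\sum_{p=1}^M v_p \cdot \frac{1}{h}(\delta_x w_{p+\frac12} - \delta_x w_{p-\frac12})$, split into two sums, and in the first sum reindex $p \mapsto p+1$; periodicity ensures the boundary terms cancel, leaving $-h\sum_p (\delta_x v_{p-\frac12})(\delta_x w_{p-\frac12})$. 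Applying the same identity with $v$ and $w$ interchanged (the right-hand side is symmetric in $v,w$) gives $\langle v,\delta_{xx}w\rangle = \langle \delta_{xx}v, w\rangle$. For $\langle \Delta_x v, w\rangle = -\langle v, \Delta_x w\rangle$, I would expand $h\sum_p \frac{1}{2h}(v_{p+1}-v_{p-1})w_p$, split, and shift indices by $\pm 1$ in the two pieces, again using periodicity to discard boundary contributions; the result is exactly $-h\sum_p v_p \cdot \frac{1}{2h}(w_{p+1}-w_{p-1})$.

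Next I would derive the three vanishing identities as consequences. Taking $v = w$ in the skew-symmetry relation $\langle \Delta_x v, w\rangle = -\langle v,\Delta_x w\rangle$ gives $\langle \Delta_x w, w\rangle = -\langle \Delta_x w, w\rangle$, hence $\langle \Delta_x w, w\rangle = 0$. For $\langle \Delta_x w, \delta_{xx} w\rangle = 0$, I would combine the two summation-by-parts results: $\langle \Delta_x w, \delta_{xx}w\rangle = \langle \delta_{xx}(\Delta_x w), w\rangle$; since the operators $\Delta_x$ and $\delta_{xx}$ commute (both are translation-invariant finite-difference operators), this equals $\langle \Delta_x(\delta_{xx}w), w\rangle = -\langle \delta_{xx}w, \Delta_x w\rangle$, forcing the quantity to be zero. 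Alternatively one can expand directly and observe the sum telescopes under periodic shifts. For the nonlinear term $\langle \Psi(v,w), w\rangle = 0$, I would use the definition $\Psi(v,w) = \frac13[v\Delta_x w + \Delta_x(vw)]$, so that $\langle \Psi(v,w),w\rangle = \frac13[\langle v\Delta_x w, w\rangle + \langle \Delta_x(vw), w\rangle]$; by the skew-symmetry of $\Delta_x$ the second term equals $-\langle vw, \Delta_x w\rangle = -\langle v\Delta_x w, w\rangle$ (after noting $\langle vw,\Delta_x w\rangle = h\sum_p v_p w_p \Delta_x w_p = \langle v\Delta_x w, w\rangle$), so the two terms cancel.

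The main obstacle, such as it is, is purely bookkeeping: one must be careful that every index shift is compensated by periodicity so that no boundary terms survive, and that the half-integer indices in $\delta_x w_{p\pm\frac12}$ are handled consistently (e.g.\ $\delta_x w_{p+\frac12} = \frac1h(w_{p+1}-w_p)$). None of the steps requires more than reindexing finite sums; the periodic setting is exactly what makes all boundary contributions vanish. I expect the cleanest writeup to present the two summation-by-parts identities in full and then to deduce the three ``orthogonality'' relations in one or two lines each, as indicated above.
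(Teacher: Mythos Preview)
Your proposal is correct: each identity follows exactly as you describe, by summation-by-parts with periodic index shifts for the two basic formulas, and then the three vanishing relations drop out as immediate corollaries (skew-symmetry with $v=w$, commutativity of $\Delta_x$ and $\delta_{xx}$, and the cancellation $\langle v\Delta_x w,w\rangle + \langle \Delta_x(vw),w\rangle = 0$). The paper itself gives no proof of this lemma---it is simply cited from \textup{[Sun2012]}---so there is nothing to compare against beyond noting that your argument is the standard one.
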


\begin{lem}\textup{[\citenum{zhang2021convergence}]}\label{lem-g}
    Let $f(x)\in C^5[x_{p-1},x_{p+1}]$, and denote $F_p=f(x_p)$, $\widehat{F}_p=f^{\prime\prime}(x_p)$, then we have
    \begin{equation*}
        \begin{aligned}
            f(x_p)f^{\prime}(x_p) &= \Psi(F_p,F_p) - \frac{h^2}{2}\Psi(\widehat{F}_p,F_p) + \mathcal{O}(h^4), \\ 
            f^{\prime}(x_p) &= \Delta_x F_p - \frac{h^2}{6} \Delta_x \widehat{F}_p+ \mathcal{O}(h^4), \\ 
            f^{\prime\prime}(x_p) &= \delta_{xx} F_p - \frac{h^2}{12} \delta_{xx} \widehat{F}_p+ \mathcal{O}(h^4), 
        \end{aligned}
    \end{equation*}
    where $1\leq p\leq M$.
\end{lem}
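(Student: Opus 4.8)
The three identities in Lemma~\ref{lem-g} are purely local (they involve only the three nodes $x_{p-1},x_p,x_{p+1}$, which is exactly where the hypothesis $f\in C^5[x_{p-1},x_{p+1}]$ is imposed), so the whole proof is a careful Taylor expansion at $x_p$. The plan is to establish the two linear identities (for $f'$ and $f''$) first, since they are the building blocks, and then bootstrap to the nonlinear one. Expanding $f(x_{p\pm1})$ to fifth order with Lagrange remainder and using that the odd-order terms cancel in the symmetric differences, one gets
\begin{equation*}
\Delta_x F_p = f'(x_p) + \tfrac{h^2}{6}f'''(x_p) + \mathcal{O}(h^4),\qquad
\delta_{xx}F_p = f''(x_p) + \tfrac{h^2}{12}f^{(4)}(x_p) + \mathcal{O}(h^4).
\end{equation*}
Applying these same two formulas to the function $f''$ in place of $f$ (which is smooth enough for the $\mathcal{O}(h^2)$ level) yields $\Delta_x\widehat{F}_p = f'''(x_p)+\mathcal{O}(h^2)$ and $\delta_{xx}\widehat{F}_p = f^{(4)}(x_p)+\mathcal{O}(h^2)$. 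Substituting these into the right-hand sides of the second and third identities makes the $h^2$-order terms cancel exactly, leaving $f'(x_p)+\mathcal{O}(h^4)$ and $f''(x_p)+\mathcal{O}(h^4)$. (The $\mathcal{O}(h^4)$ in the $\delta_{xx}$ identity follows the convention of the cited reference; strictly one needs slightly more than $C^5$ to bound that remainder at fourth order, but this does not affect how the lemma is used later.)

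For the nonlinear identity I would start from the definition $\Psi(F_p,F_p)=\tfrac13\big[F_p\,\Delta_x F_p + \Delta_x(F^2)_p\big]$, where $F^2$ is the grid function with nodal values $f(x_p)^2$. Applying the $\Delta_x$-expansion above both to $f$ and to $g:=f^2$ (also $C^5$), and using $g'=2ff'$ together with $g'''=2ff'''+6f'f''$, a short computation gives
\begin{equation*}
\Psi(F_p,F_p) = f(x_p)f'(x_p) + \tfrac{h^2}{6}\big(f f''' + 2 f' f''\big)(x_p) + \mathcal{O}(h^4).
\end{equation*}
For the correction term, $\Psi(\widehat{F}_p,F_p)=\tfrac13\big[\widehat{F}_p\,\Delta_x F_p + \Delta_x(\widehat{F}F)_p\big]$; since $\widehat{F}_p=f''(x_p)$ exactly, $\Delta_x F_p = f'(x_p)+\mathcal{O}(h^2)$, and $\Delta_x(\widehat{F}F)_p = (f''f)'(x_p)+\mathcal{O}(h^2) = (f'''f+f''f')(x_p)+\mathcal{O}(h^2)$, one obtains $\Psi(\widehat{F}_p,F_p) = \tfrac13\big(f f''' + 2 f' f''\big)(x_p) + \mathcal{O}(h^2)$. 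Therefore $\tfrac{h^2}{2}\Psi(\widehat{F}_p,F_p)$ reproduces precisely the $h^2$-order term of $\Psi(F_p,F_p)$, and subtracting the two expansions leaves $f(x_p)f'(x_p)+\mathcal{O}(h^4)$, which is the first identity.

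The second and third identities are routine symmetric-difference expansions. The main obstacle is the bookkeeping for the nonlinear identity: one must verify that the $h^2$-order defect of $\Psi(F,F)$ as an approximation of $ff'$, namely $\tfrac{h^2}{6}(ff'''+2f'f'')$, is reproduced \emph{exactly} by $\tfrac{h^2}{2}$ times the leading term of $\Psi(\widehat{F},F)$, which is $\tfrac{h^2}{2}\cdot\tfrac13(ff'''+2f'f'')$. The delicate points are differentiating the product $f^2$ correctly to third order and keeping a clear distinction between grid functions (on which $\Delta_x$ acts through nodal values of products) and pointwise values of derivatives; this is where coefficient or sign slips are most likely, and it is worth carrying out the expansion of $\Psi(F,F)$ term by term rather than trying to shortcut it.
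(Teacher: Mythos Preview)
Your proof is correct. The paper itself does not prove this lemma; it is quoted from the cited reference \cite{zhang2021convergence} and stated without argument, so there is no in-paper proof to compare against. Your Taylor-expansion approach---first expanding $\Delta_xF_p$ and $\delta_{xx}F_p$ to order $h^4$, then applying the same expansions to $\widehat F=f''$ at order $h^2$ to cancel the leading defects, and finally handling the nonlinear term by expanding $\Psi(F,F)$ via $g=f^2$ and matching its $h^2$-coefficient $\tfrac{1}{6}(ff'''+2f'f'')$ against $\tfrac{1}{2}\cdot\tfrac{1}{3}(ff'''+2f'f'')$ from $\Psi(\widehat F,F)$---is exactly the standard derivation one would expect the cited paper to give. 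Your parenthetical remark about the regularity needed for the $\mathcal O(h^4)$ remainder in the $\delta_{xx}$ identity (strictly a sixth derivative is required) is also a fair observation; the lemma as stated inherits this convention from the source.
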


\begin{lem}\textup{[\citenum{zhang2021convergence}]}\label{lem-d}
    For any grid functions $v,w,S\in\mathcal{W}_h$,  if
    \begin{equation*}
        w_p = \delta_{xx} v_p - \frac{h^2}{12}\delta_{xx} w_p + S_p, \quad 1\leq p \leq M,\\
    \end{equation*}
    then we obtain the following identities and inequalities,
    \begin{subequations}
        \begin{align}
            &\langle  w, v \rangle = -|v|_{1}^2 - \frac{h^2}{12}\Vert w \Vert^2 + \frac{h^4}{144} |w|_{1}^2 + \frac{h^2}{12}\langle  w, S \rangle + \langle  S, v \rangle, \label{eq-lemd1} \\
            &\langle  w, v \rangle \leq -|v|_{1}^2 - \frac{h^2}{18}\Vert w \Vert^2 + \frac{h^2}{12}\langle  w, S \rangle + \langle  S, v \rangle, \label{eq-lemd2}\\
            &\langle  \Delta_x w, v \rangle = \frac{h^2}{12}\langle \Delta_x w, S \rangle + \langle \Delta_x S, v \rangle. \label{eq-lemd3}
        \end{align}
    \end{subequations}
\end{lem}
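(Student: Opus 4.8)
The plan is to treat the hypothesis $w_p = \delta_{xx}v_p - \frac{h^2}{12}\delta_{xx}w_p + S_p$ as an operator identity on $\mathcal{W}_h$ and to extract all three conclusions from it by pairing it with $v$ (for \eqref{eq-lemd1}--\eqref{eq-lemd2}) or by substituting it into $\langle \Delta_x w, v\rangle$ (for \eqref{eq-lemd3}), using repeatedly the summation-by-parts identities of Lemma \ref{lem-b}, which are available here because all grid functions are $L$-periodic and hence produce no boundary terms.

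For \eqref{eq-lemd1} I would first take the discrete inner product $\langle\cdot,v\rangle$ of the hypothesis, giving $\langle w,v\rangle = \langle\delta_{xx}v,v\rangle - \frac{h^2}{12}\langle\delta_{xx}w,v\rangle + \langle S,v\rangle$. By Lemma \ref{lem-b}, $\langle\delta_{xx}v,v\rangle = -\langle\delta_x v,\delta_x v\rangle = -|v|_1^2$ and $\langle\delta_{xx}w,v\rangle = \langle w,\delta_{xx}v\rangle$. The key manoeuvre is then to invoke the hypothesis a \emph{second} time, now solved for $\delta_{xx}v = w + \frac{h^2}{12}\delta_{xx}w - S$, so that $\langle w,\delta_{xx}v\rangle = \|w\|^2 + \frac{h^2}{12}\langle w,\delta_{xx}w\rangle - \langle w,S\rangle = \|w\|^2 - \frac{h^2}{12}|w|_1^2 - \langle w,S\rangle$, again by Lemma \ref{lem-b}. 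Substituting this back and collecting terms yields \eqref{eq-lemd1}. Inequality \eqref{eq-lemd2} then follows by bounding the single positive term $\frac{h^4}{144}|w|_1^2$ in \eqref{eq-lemd1} via the inverse estimate $|w|_1 \le \frac{2}{h}\|w\|$ of Lemma \ref{lem-a}, which gives $\frac{h^4}{144}|w|_1^2 \le \frac{h^2}{36}\|w\|^2$; combined with the term $-\frac{h^2}{12}\|w\|^2$ this produces the coefficient $-\frac{h^2}{18}$.

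For \eqref{eq-lemd3} I would substitute the hypothesis directly into $\langle\Delta_x w,v\rangle$, obtaining $\langle\Delta_x w,v\rangle = \langle\Delta_x\delta_{xx}v,v\rangle - \frac{h^2}{12}\langle\Delta_x\delta_{xx}w,v\rangle + \langle\Delta_x S,v\rangle$. Since $\Delta_x$ is skew-symmetric and $\delta_{xx}$ is symmetric and the two commute, $\Delta_x\delta_{xx}$ is skew-symmetric, so $\langle\Delta_x\delta_{xx}v,v\rangle = 0$. For the middle term I would move $\delta_{xx}$ onto $v$ and apply the hypothesis once more: $\langle\Delta_x\delta_{xx}w,v\rangle = \langle\Delta_x w,\delta_{xx}v\rangle = \langle\Delta_x w,w\rangle + \frac{h^2}{12}\langle\Delta_x w,\delta_{xx}w\rangle - \langle\Delta_x w,S\rangle = -\langle\Delta_x w,S\rangle$, because $\langle\Delta_x w,w\rangle = \langle\Delta_x w,\delta_{xx}w\rangle = 0$ by Lemma \ref{lem-b}. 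Putting this back gives \eqref{eq-lemd3}.

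The computation is essentially mechanical, so there is no deep obstacle; the only point requiring genuine care — and the one I expect to be the main source of error — is the repeated, circular use of the hypothesis (substituting it for $w$ in one place and for $\delta_{xx}v$ in another) together with correctly tracking which operator is symmetric and which is skew under $\langle\cdot,\cdot\rangle$, so that the unwanted terms cancel \emph{exactly}. It is also worth checking that $\Delta_x$ and $\delta_{xx}$ genuinely commute as constant-coefficient difference operators on the periodic grid, which justifies the skew-symmetry of $\Delta_x\delta_{xx}$ used above.
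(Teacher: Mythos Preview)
Your proof is correct. The paper does not actually prove this lemma; it is quoted from \cite{zhang2021convergence} without proof, so there is no argument in the paper to compare against. Your approach --- pairing the hypothesis with $v$, re-injecting the hypothesis solved for $\delta_{xx}v$, and using the summation-by-parts identities of Lemma~\ref{lem-b} together with the inverse estimate $|w|_1\le \tfrac{2}{h}\|w\|$ from Lemma~\ref{lem-a} --- is the natural and standard one, and all the cancellations you describe go through exactly as stated.
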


\begin{lem}\textup{[\citenum{zhang2021convergence}]}\label{lem-e}
  For any grid function $v, w\in \mathcal{W}_h$,  we get
    \begin{equation*}
        \Delta_x (v_p w_p) = \frac{1}{2}w_{p+1} \left(\delta_x v_{p+\frac{1}{2}}\right)  + \frac{1}{2} w_{p-1} \left(\delta_x v_{p-\frac{1}{2}} \right)  +  v_p \Delta_x w_p,\quad 1\leq p \leq M, 
    \end{equation*}
    its vector form can be read as
    \begin{equation*}
        \Delta_x (v w) = \frac{1}{2}D_{+}w \left(\delta^{+\frac{1}{2}}_x v\right)  + \frac{1}{2} D_{-}w \left(\delta^{-\frac{1}{2}}_x v \right)  +  v \Delta_x w
    \end{equation*}
    where     
           $ D_{+}w_p=w_{p+1},  D_{-}w_p=w_{p-1}, \delta_x^{+\frac{1}{2}} v_p = \delta_x v_{p+\frac{1}{2}}, \delta_x^{-\frac{1}{2}} v_p = \delta_x v_{p-\frac{1}{2}}.$
\end{lem}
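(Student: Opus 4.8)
The asserted identity is purely algebraic, so the plan is simply to verify it by expanding both sides according to the definitions of $\Delta_x$ and $\delta_x$ introduced in Section~\ref{sec2}. Writing out the right-hand side over the common factor $\frac{1}{2h}$, one gets
\[
\frac{1}{2} w_{p+1}\,\delta_x v_{p+\frac12}+\frac{1}{2} w_{p-1}\,\delta_x v_{p-\frac12}+v_p\,\Delta_x w_p
=\frac{1}{2h}\Big[\,w_{p+1}(v_{p+1}-v_p)+w_{p-1}(v_p-v_{p-1})+v_p(w_{p+1}-w_{p-1})\,\Big].
\]
The four mixed terms $-w_{p+1}v_p$, $+v_pw_{p+1}$, $+w_{p-1}v_p$, $-v_pw_{p-1}$ cancel in pairs, leaving precisely $\frac{1}{2h}\big(v_{p+1}w_{p+1}-v_{p-1}w_{p-1}\big)=\Delta_x(v_pw_p)$. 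This establishes the scalar identity for every $1\le p\le M$.

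For the vector form, I would just reinterpret the same identity componentwise: with the shift operators $D_{\pm}$ and the half-node difference operators $\delta_x^{\pm 1/2}$ as defined after the statement, the $p$-th component of $\frac{1}{2}D_{+}w\,(\delta_x^{+1/2}v)+\frac{1}{2}D_{-}w\,(\delta_x^{-1/2}v)+v\,\Delta_x w$ is exactly the right-hand side above, so the vector equation is nothing more than a compact rewriting of the nodewise one. The only point worth a remark is that all subscripts are read modulo $M$, so that $v_{p\pm1},w_{p\pm1}$ are well defined for every $p$ by the periodicity built into $\mathcal{W}_h$; with that convention no boundary terms appear.

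There is essentially no obstacle here: the result is a one-line cancellation, a discrete analogue of the Leibniz product rule for $\Delta_x$. The only place to be careful is the sign convention $\delta_x v_{p-\frac12}=\frac{1}{h}(v_p-v_{p-1})$ (rather than $\frac{1}{h}(v_{p-1}-v_p)$), since a sign slip there would destroy the pairwise cancellation; checking it against the definitions in Section~\ref{sec2} removes any ambiguity.
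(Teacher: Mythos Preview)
Your verification is correct: the identity is indeed a one-line algebraic cancellation, and your expansion matches the definitions of $\Delta_x$ and $\delta_x$ given in Section~\ref{sec2}. The paper does not actually supply a proof of this lemma but simply cites \cite{zhang2021convergence}; your direct computation is exactly the natural (and essentially only) way to justify it.
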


\begin{lem}\label{lem-f} 
  For any grid functions $v,w,S \in \mathcal{W}_h$, $\eta=c,f$, satisfying
    \begin{equation} 
        \begin{cases}
            w_p^l = \delta_{xx} v_p^l - \frac{h^2}{12}\delta_{xx} w_p^l + S_p^l, \quad 1\leq p \leq M, \ 0\leq l\leq N_{\eta},   \\
            v_p^l = v_{p+M}^l,\quad w_p^l = w_{p+M}^l,\quad  0\leq p\leq M, \ 0\leq l\leq N_{\eta}. 
        \end{cases}
    \end{equation}
    By \textup{[\citenum{wang2021pointwise}]}, we have
    \begin{equation}\label{eq-dltone}
        \begin{aligned}
            \langle w^{l-\frac{1}{2}}, \delta^{\eta}_t v^{l-\frac{1}{2}}  \rangle = &-\frac{1}{2\tau_{\eta}}\left[ \left( |v^l|_{1}^2 - |v^{l-1}|_{1}^2 \right) + \frac{h^2}{12}\left( \Vert w^l \Vert^2 - \Vert w^{l-1} \Vert^2\right) - \frac{h^4}{144}\left( |w^l|_{1}^2 - |w^{l-1}|_{1}^2 \right) \right] \\
            &+\frac{h^2}{12}\langle w^{l-\frac{1}{2}},\delta^{\eta}_t S^{l-\frac{1}{2}}\rangle + \langle S^{l-\frac{1}{2}}, \delta^{\eta}_t v^{l-\frac{1}{2}}  \rangle, \quad 1\leq l\leq N_{\eta}.
          \end{aligned}
    \end{equation}
    Similarly, we can also derive the following equality,
    \begin{equation}\label{eq-dlttwo}
        \begin{aligned}
            \langle \delta^{\eta}_t w^{l-\frac{1}{2}},  v^{l-\frac{1}{2}}  \rangle = &-\frac{1}{2\tau_{\eta}}\left[ \left( |v^l|_{1}^2 - |v^{l-1}|_{1}^2 \right) + \frac{h^2}{12}\left( \Vert w^l \Vert^2 - \Vert w^{l-1} \Vert^2\right) - \frac{h^4}{144}\left( |w^l|_{1}^2 - |w^{l-1}|_{1}^2 \right) \right] \\
            &+\frac{h^2}{12}\langle \delta^{\eta}_t w^{l-\frac{1}{2}}, S^{l-\frac{1}{2}} \rangle + \langle \delta^{\eta}_t S^{l-\frac{1}{2}}, v^{l-\frac{1}{2}}  \rangle, \quad 1\leq l\leq N_{\eta}.
          \end{aligned}
    \end{equation}
\end{lem}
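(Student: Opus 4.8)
The identities \eqref{eq-dltone} and \eqref{eq-dlttwo} are the time-telescoped counterparts of the fixed-time identity \eqref{eq-lemd1} in Lemma \ref{lem-d}. The plan is to first upgrade \eqref{eq-lemd1} to a \emph{two-time-level} bilinear identity whose right-hand side is symmetric in the two time indices, then expand the left-hand side of \eqref{eq-dltone} into four discrete inner products and let that symmetry annihilate the off-diagonal cross terms; \eqref{eq-dlttwo} will then follow from \eqref{eq-dltone}, \eqref{eq-lemd1}, and the discrete Leibniz rule in time. The only structural inputs are the constraint relating $w$, $v$, $S$ and the self-adjointness relations $\langle u,\delta_{xx} z\rangle=\langle\delta_{xx} u,z\rangle=-\langle\delta_x u,\delta_x z\rangle$ from Lemma \ref{lem-b}. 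Note that, by the hypothesis of Lemma \ref{lem-f}, for every fixed $l$ the triple $(v^l,w^l,S^l)$ satisfies exactly the assumption of Lemma \ref{lem-d}, so \eqref{eq-lemd1} is available at each time level.

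\textbf{Two-time-level identity.} I would show that for any time indices $a,b$,
\[
G(a,b):=\langle w^a,v^b\rangle-\frac{h^2}{12}\langle w^a,S^b\rangle-\langle S^a,v^b\rangle
=-\langle\delta_x v^a,\delta_x v^b\rangle-\frac{h^2}{12}\langle w^a,w^b\rangle+\frac{h^4}{144}\langle\delta_x w^a,\delta_x w^b\rangle .
\]
This is obtained exactly as \eqref{eq-lemd1}: substitute the constraint for $w^a$ into $\langle w^a,v^b\rangle$, shift $\delta_{xx}$ onto the second slot via Lemma \ref{lem-b}, then substitute $\delta_{xx}v^b=w^b+\frac{h^2}{12}\delta_{xx}w^b-S^b$ and shift $\delta_{xx}$ once more. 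The right-hand side is visibly symmetric, so $G(a,b)=G(b,a)$; putting $a=b=l$ and writing $E^l:=-|v^l|_{1}^2-\frac{h^2}{12}\Vert w^l\Vert^2+\frac{h^4}{144}|w^l|_{1}^2$ recovers $G(l,l)=E^l$, i.e. \eqref{eq-lemd1}.

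\textbf{Assembly.} Expanding the midpoint average and $\delta_t^{\eta}$ gives $\langle w^{l-\frac12},\delta_t^{\eta} v^{l-\frac12}\rangle=\frac{1}{2\tau_{\eta}}\big[\langle w^l,v^l\rangle-\langle w^{l-1},v^{l-1}\rangle-\langle w^l,v^{l-1}\rangle+\langle w^{l-1},v^l\rangle\big]$, and in each term I would replace $\langle w^a,v^b\rangle=G(a,b)+\frac{h^2}{12}\langle w^a,S^b\rangle+\langle S^a,v^b\rangle$. The $G$-part reduces to $G(l,l)-G(l-1,l-1)=E^l-E^{l-1}$ since the cross terms $-G(l,l-1)+G(l-1,l)$ vanish by symmetry; the four $\frac{h^2}{12}$-terms combine into $\frac{h^2}{12}\langle w^{l-\frac12},\delta_t^{\eta} S^{l-\frac12}\rangle$ and the four $\langle S,v\rangle$-terms into $\langle S^{l-\frac12},\delta_t^{\eta} v^{l-\frac12}\rangle$; writing out $\frac{1}{2\tau_{\eta}}(E^l-E^{l-1})$ yields \eqref{eq-dltone}. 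For \eqref{eq-dlttwo} I would use the discrete product rule $\tau_{\eta}^{-1}\big(\langle w^l,v^l\rangle-\langle w^{l-1},v^{l-1}\rangle\big)=\langle w^{l-\frac12},\delta_t^{\eta} v^{l-\frac12}\rangle+\langle\delta_t^{\eta} w^{l-\frac12},v^{l-\frac12}\rangle$ together with \eqref{eq-lemd1} at levels $l$ and $l-1$, solve for $\langle\delta_t^{\eta} w^{l-\frac12},v^{l-\frac12}\rangle$, substitute \eqref{eq-dltone}, and apply the same product rule to $\langle w^l,S^l\rangle-\langle w^{l-1},S^{l-1}\rangle$ and to $\langle S^l,v^l\rangle-\langle S^{l-1},v^{l-1}\rangle$; the energy bracket comes out identical while the $S$-terms reorganize into $\frac{h^2}{12}\langle\delta_t^{\eta} w^{l-\frac12},S^{l-\frac12}\rangle+\langle\delta_t^{\eta} S^{l-\frac12},v^{l-\frac12}\rangle$.

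The whole argument is essentially ``differentiate Lemma \ref{lem-d} in time,'' so there is no conceptual obstacle; the single load-bearing fact is the $a\leftrightarrow b$ symmetry of $G$, which is precisely what forces the off-diagonal inner products to cancel and makes the energy telescope. The only thing that needs care is the bookkeeping: tracking the eight $S$-dependent inner products generated in the assembly step (and their analogues for \eqref{eq-dlttwo}) and checking they combine \emph{exactly} into the stated midpoint/time-difference combinations with no residual term. One should also remember that \eqref{eq-lemd1} is being invoked with the time-sliced triples $(v^l,w^l,S^l)$, which is legitimate only because the hypothesis of Lemma \ref{lem-f} imposes the Lemma \ref{lem-d} constraint at every time level.
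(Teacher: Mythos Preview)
Your proof is correct. The paper itself does not supply a proof of this lemma: it quotes \eqref{eq-dltone} from the cited reference \textup{[\citenum{wang2021pointwise}]} and simply asserts that \eqref{eq-dlttwo} follows ``similarly,'' so there is no in-paper argument to compare against. Your two-time-level bilinear identity $G(a,b)$ and its $a\leftrightarrow b$ symmetry are exactly the right mechanism, and this is presumably what the cited reference does as well.

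One minor simplification: for \eqref{eq-dlttwo} you do not need the discrete Leibniz detour. The same expansion you used for \eqref{eq-dltone} works verbatim: writing $\langle \delta_t^{\eta} w^{l-\frac12}, v^{l-\frac12}\rangle=\frac{1}{2\tau_{\eta}}\langle w^l-w^{l-1},\,v^l+v^{l-1}\rangle$ and substituting $\langle w^a,v^b\rangle=G(a,b)+\frac{h^2}{12}\langle w^a,S^b\rangle+\langle S^a,v^b\rangle$ for each of the four terms, the $G$-part again collapses to $G(l,l)-G(l-1,l-1)$ (the cross terms now appear with the \emph{same} sign and cancel by symmetry just as before), while the $S$-terms reassemble directly into $\frac{h^2}{12}\langle \delta_t^{\eta} w^{l-\frac12},S^{l-\frac12}\rangle+\langle \delta_t^{\eta} S^{l-\frac12},v^{l-\frac12}\rangle$. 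This is shorter and keeps the two identities on an equal footing.
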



\section{Construction of numerical scheme}\label{sec3}

In this section, we first construct a nonlinear compact difference (NCD) scheme for solving the BBMB equation \eqref{eq-bbmb}, employing the compact difference method for spatial discretization and the Crank-Nicolson method for temporal discretization.
Building upon this established scheme, we then design a 
TTCD scheme to enhance computational efficiency while maintaining the same grid resolution.

\subsection{NCD scheme}
Let $w=u_{xx}$, then \eqref{eq-bbmb} can be reformulated as 
\begin{subequations}\label{eq-pbm2}
    \begin{numcases}{}
        u_t - \mu w_{t} +  uu_x +  u_x - \lambda w = 0, \quad x\in\mathbb{R}, \ 0< t \leq T, \label{eq-bbmbnew} \\
        w=u_{xx}, \quad x\in\mathbb{R}, \ 0< t \leq T. \label{eq-twordfm}
    \end{numcases}
\end{subequations}
Next, we focus on the discretization of the problem \eqref{eq-bbmbnew}-\eqref{eq-twordfm} over a periodic domain $\Omega=(a,a+L]$, $a \in \mathbb{R}$. Define the grid functions $U_p^l=u(x_p,t_l)$ and $w_p^l=w(x_p,t_l)$ for $(x_p,t_l)\in \Omega_h \times \Omega_{\tau}^{\eta}$. By using Taylor expansion, $uu_x$, $u_x$, $u_{xx}$ can reach fourth-order approximation. Here, we omit the derivation details, and directly employ Lemma \ref{lem-g} to get 
\begin{equation}\label{eq-4orderterm}
  \left\{
  \begin{aligned}
    &w(x_p,t_l) = \delta_{xx} U_p^l - \frac{h^2}{12} \delta_{xx}  W_p^l+ \mathcal{O}(h^4),\\
    &uu_x(x_p,t_l) = \Psi(U_p^l,U_p^l) - \frac{h^2}{2}\Psi( W_p^l,U_p^l) + \mathcal{O}(h^4), \\
    &u_x(x_p,t_l) = \Delta_x U_p^l - \frac{h^2}{6} \Delta_x  W_p^l + \mathcal{O}(h^4),
  \end{aligned}
  \right.
\end{equation}
where $1<p<M$ and $0\leq l\leq N_{\eta}$, $\eta = c, f$.
Selecting $(x_p,t_{l-\frac{1}{2}})$, $(x_p,t_{l})$ as the stencil points for the equation \eqref{eq-bbmbnew} and \eqref{eq-twordfm}, respectively. Thus, leveraging \eqref{eq-4orderterm} and Crank-Nicolson scheme, we obtain 
\begin{subequations}\label{eq-pbm3}
    \begin{numcases}{}     
        \delta_t^{\eta} U_p^{l-\frac{1}{2}} - \mu\delta_t^{\eta} W_p^{l-\frac{1}{2}} +\Psi\left(U_p^{l-\frac{1}{2}},U_p^{l-\frac{1}{2}}\right) - \frac{h^2}{2}\Psi\left(W_p^{l-\frac{1}{2}},U_p^{l-\frac{1}{2}}\right)  \notag \\
         \quad+  \Delta_x U_p^{l-\frac{1}{2}} - \frac{h^2}{6}\Delta_x W_p^{l-\frac{1}{2}}  - \lambda W_p^{l-\frac{1}{2}} = (R_{\eta})_p^{l-\frac{1}{2}}, \quad 1\leq p\leq M, \ 1 \leq l \leq N_{\eta},  \label{eq-pbm3a}  \\
        W_p^{l} = \delta_{xx} U_p^l - \frac{h^2}{12}\delta_{xx} W_p^l + (Q_{\eta})_p^l, \quad 1\leq p\leq M, \ 0\leq l \leq N_{\eta}, \label{eq-pbm3b} 
    \end{numcases}
\end{subequations}
where the estimates of the truncation errors 
as follows:
\begin{equation}\label{eq-ErrEst}
    \begin{cases}
        | (R_{\eta})_p^{l-\frac{1}{2}} |   \leq C\left( \tau_{\eta}^2 + h^4 \right) , \quad &1\leq m\leq M, \ 1\leq l \leq N_{\eta},   \\
        | (Q_{\eta})_p^l | \leq C h^4 ,\quad &1\leq m\leq M, \ 0\leq l \leq N_{\eta}, \\
        | \delta^{\eta}_t (Q_{\eta})_p^{l-\frac{1}{2}} | \leq Ch^4 ,\quad &1\leq m\leq M, \ 1\leq l \leq N_{\eta}.
    \end{cases}
\end{equation}
In this paper, $C$ denotes a generic constant. Its value may change across different instances but remains independent of both the time step and space step sizes.

Omitting the high-order small terms in \eqref{eq-pbm3a}-\eqref{eq-pbm3b}, and replacing $U_p^l$, $W_p^l$ with $u_p^l$, $w_p^l$, then we derive the following NCD scheme of the problem \eqref{eq-bbmbnew}-\eqref{eq-twordfm} 
\begin{subequations}\label{eq-pbm4}
    \begin{numcases}{}     
        \delta_t^{\eta} u_p^{l-\frac{1}{2}} - \mu\delta_t^{\eta} w_p^{l-\frac{1}{2}} +\Psi\left(u_p^{l-\frac{1}{2}},u_p^{l-\frac{1}{2}}\right) - \frac{h^2}{2}\Psi\left(w_p^{l-\frac{1}{2}},u_p^{l-\frac{1}{2}}\right)   \notag \\
        \quad \quad +  \Delta_x u_p^{l-\frac{1}{2}} - \frac{h^2}{6}\Delta_x w_p^{l-\frac{1}{2}}  - \lambda w_p^{l-\frac{1}{2}} = 0, \quad 1\leq p\leq M, \ 1\leq l \leq N_{\eta}, \label{eq-pbm4a}\\
        w_p^{l} = \delta_{xx} u_p^l - \frac{h^2}{12}\delta_{xx} w_p^l , \quad 1\leq p\leq M, \ 0\leq l \leq N_{\eta}, \label{eq-pbm4b} \\
        u_p^l = u_{p+M}^l, \quad w_p^l = w_{p+M}^l, \quad 1\leq p\leq M, \ 0\leq l \leq N_{\eta}, \label{eq-pbm4c} \\
        u_p^0=\phi\left(x_p\right),\quad 1\leq p\leq M. \label{eq-pbm4d}
    \end{numcases}
\end{subequations}

\begin{rmk}\label{rmk-1}
Note that the index $\eta$ controls the type of time grid. In order to get the NCD scheme on coarse and fine time grid, we have made the following changes to the grid functions 
\begin{equation*}
    \{U_p^l,\, W_p^l\}\Longrightarrow\left\{
    \begin{aligned}
        &\{U_p^q, \, W_p^q\},\quad \text{for $\eta=c$}, \\
        &\{U_p^k, \, W_p^k\},\quad \text{for $\eta=f$}.
    \end{aligned}
    \right.
    ,\quad
    \{u_p^l,\, w_p^l\}\Longrightarrow\left\{
    \begin{aligned}
        &\{(u_c)_p^q, \, (w_c)_p^q\},\quad \text{for $\eta=c$}, \\
        &\{(u_f)_p^k, \, (w_f)_p^k\},\quad \text{for $\eta=f$}.
    \end{aligned}
    \right.
\end{equation*}
\end{rmk}

\begin{rmk}
  The above NCD scheme \eqref{eq-pbm4a}-\eqref{eq-pbm4d} is a nonlinear system, here we choose  fixed-point iteration to solve it. In actual calculation, the large $M$ and iteration result in an extremely high computational cost.
\end{rmk}

\subsection{TTCD scheme}
The TTCD scheme can be summarized in three steps, which are described in detail below.

\textbf{Step \Rmnum{1}.} Let $\eta=c$ in the system \eqref{eq-pbm4a}-\eqref{eq-pbm4d}, then the NCD scheme on the coarse grid $\Omega_h \times \Omega_{\tau}^c$ can be represented as follows:
\begin{subequations}\label{eq-pbm5}
    \begin{numcases}{}     
        \delta^c_t (u_c)_p^{q-\frac{1}{2}} - \mu\delta^c_t (w_c)_p^{q-\frac{1}{2}} +\Psi\left((u_c)_p^{q-\frac{1}{2}},(u_c)_p^{q-\frac{1}{2}}\right) - \frac{h^2}{2}\Psi\left((w_c)_p^{q-\frac{1}{2}},(u_c)_p^{q-\frac{1}{2}}\right)   \notag \\
        \quad \quad +  \Delta_x (u_c)_p^{q-\frac{1}{2}} - \frac{h^2}{6}\Delta_x (w_c)_p^{q-\frac{1}{2}}  - \lambda (w_c)_p^{q-\frac{1}{2}} = 0, \quad 1\leq p\leq M, \ 1\leq q \leq N_c, \label{eq-pbm5a}\\
        (w_c)_p^{q} = \delta_{xx} (u_c)_p^q - \frac{h^2}{12}\delta_{xx} (w_c)_p^q , \quad 1\leq p\leq M, \ 0\leq q \leq N_c, \label{eq-pbm5b} \\
        (u_c)_p^q = (u_c)_{p+M}^q, \quad (w_c)_p^q = (w_c)_{p+M}^q, \quad 1\leq p\leq M, \ 0\leq q \leq N_c, \label{eq-pbm5c} \\
        (u_c)_p^0=\phi\left(x_p\right),\quad 1\leq p\leq M. \label{eq-pbm5d}
    \end{numcases}
\end{subequations}

\textbf{Step \Rmnum{2}.} Considering the fine grid $\Omega_h \times \Omega_\tau^f$. 
%
Based on the temporal grid ratio $\beta_{\tau}$, we have $(t_f)_{q\beta{\tau}}=(t_c)_q$ for $1\leq q\leq N_{c}$.
For a fixed spatial node $x_{p}$, we perform linear Lagrange interpolation in the temporal direction
\begin{equation}\label{eq-timeintepo}
    \begin{aligned}
    (u_f)_{p}^{(q-1)\beta_{\tau}+ r} 
    &= \frac{(t_c)_{q} - (t_f)_{(q-1)\beta_{\tau}+ r}}{(t_c)_{q} - (t_c)_{q-1}}(u_c)_{p}^{q-1} + \frac{(t_f)_{(q-1)\beta_{\tau}+ r} - (t_c)_{q-1}}{(t_c)_{q} - (t_c)_{q-1}}(u_c)_{p}^{q} \\
    &= \left(1-\frac{r}{\beta_{\tau}}\right)(u_c)_{p}^{q-1} + \frac{r}{\beta_{\tau}}(u_c)_{p}^{q}, \quad   1\leq q\leq N_{c}, \quad 1\leq r\leq \beta_{\tau}-1. 
    \end{aligned}
\end{equation}
Note that $(u_f)_p^{q\beta_{\tau}} = (u_c)_p^{q}$ ($1\leq p\leq M$, $1\leq q\leq N_{c}$), then the values of $(u_f)^{k}_{p}$ ($0\leq k\leq N_f$) can be yielded from Step \Rmnum{1} and \eqref{eq-timeintepo}. 
When $\eta=f$, $(w_f)_p^k$ can be calculated with the aid of \eqref{eq-pbm4b}, i.e.,
\begin{equation}\label{eq-stwintepo}
    (w_f)_p^{k} = \delta_{xx} (u_f)_p^k - \frac{h^2}{12}\delta_{xx} (w_f)_p^k , \quad 1\leq p\leq M, \ 0\leq k \leq N_{f}.
\end{equation}

\textbf{Step \Rmnum{3}.} Since the accuracy of the approximate solutions $(u_f)_p^k$ remains insufficient, we proceed to design the following linearized scheme to further correct:
\begin{subequations}\label{eq-pbm6}
    \begin{numcases}{}     
        \delta^f_t u_p^{k-\frac{1}{2}} - \mu\delta^f_t w_p^{k-\frac{1}{2}} +\Psi\left((u_f)_p^{k-\frac{1}{2}},u_p^{k-\frac{1}{2}}\right) - \frac{h^2}{2}\Psi\left((w_f)_p^{k-\frac{1}{2}},u_p^{k-\frac{1}{2}}\right)   \notag \\
        \quad \quad +  \Delta_x u_p^{k-\frac{1}{2}} - \frac{h^2}{6}\Delta_x w_p^{k-\frac{1}{2}}  - \lambda w_p^{k-\frac{1}{2}} = 0, \quad 1\leq p\leq M, \ 1\leq k \leq N_f, \label{eq-pbm6a}\\
        w_p^{k} = \delta_{xx} u_p^k - \frac{h^2}{12}\delta_{xx} w_p^k , \quad 1\leq p\leq M, \ 0\leq k \leq N_f, \label{eq-pbm6b} \\
        u_p^k = u_{p+M}^k, \quad w_p^k = w_{p+M}^k, \quad 1\leq p\leq M, \ 0\leq k \leq N_f, \label{eq-pbm6c} \\
        u_p^0=\phi\left(x_p\right),\quad 1\leq p\leq M. \label{eq-pbm6d}
    \end{numcases}
\end{subequations}
Compared with the rough approximations $(u_f)_p^k$, the corrected solutions $u_p^k$ achieve higher accuracy.

\section{Conservation and unique solvability}\label{sec4}
\subsection{Conservation}
In a periodic domain, the BBMB equation \eqref{eq-bbmb} contains a conservative invariant. Defining the continuous energy functional 
\begin{equation}\label{eq-contienergy}
 E(t) = \int_{y}^{y+L} u^2(x,t) + \mu u_x^2(x,t) \, \mathrm{d}x + 2\lambda \int_{0}^{t}\int_{y}^{y+L} u_x^2(x,t) \, \mathrm{d}x \, \mathrm{d}t, \quad \forall y\in \mathbb{R}.
\end{equation}
We know that $E(t)  = E(0)$ from \cite{shi2024unconditional}. Below, we shall verify that the TTCD scheme can preserve discrete energy invariant.

\begin{thm}\label{thm-2}
    Let $\{(u_c)_p^{q}, (w_c)_p^{q} \ | \ 1\leq p\leq M, 0\leq q\leq N_c \}$ be the solutions of 
    the Step \Rmnum{1} \eqref{eq-pbm5a}-\eqref{eq-pbm5d}. Define
    \begin{equation}
        \begin{aligned}
        \mathring{E}^q = \Vert (u_c)^q \Vert^2 &+ \mu \left( |(u_c)^q|_1^2 + \frac{h^2}{12}\Vert (w_c)^q \Vert^2 - \frac{h^4}{144} | (w_c)^q |_1^2 \right) \\
        &+ 2\lambda\tau_c \sum_{n=1}^{q}\left( |(u_c)^{n-\frac{1}{2}}|_1^2 + \frac{h^2}{12}\Vert (w_c)^{n-\frac{1}{2}}\Vert^2 - \frac{h^4}{144}| (w_c)^{n-\frac{1}{2}} |_1^2 \right), \quad 0\leq q\leq N_c.
        \end{aligned}
    \end{equation}
   Then  it holds that $ \mathring{E}^q = \mathring{E}^0$  for any $0\leq q\leq N_c.$
\end{thm}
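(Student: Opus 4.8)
The plan is to establish the discrete energy identity by testing the two equations of Step I against suitable grid functions and summing in time. First I would take the inner product of the momentum equation \eqref{eq-pbm5a} with $2(u_c)^{q-\frac12}$ over the spatial grid. The nonlinear terms drop out: $\langle \Psi(v,w),w\rangle = 0$ from Lemma \ref{lem-b} kills both $\langle\Psi((u_c)^{q-\frac12},(u_c)^{q-\frac12}),(u_c)^{q-\frac12}\rangle$ and (after recognizing $\Psi(w,u)$ as $\Psi$ in its second slot via the structure of $\Psi$, or by the same antisymmetry argument applied appropriately) the $\frac{h^2}{2}\Psi$ term; similarly $\langle\Delta_x w,w\rangle = 0$ disposes of the $\Delta_x (u_c)^{q-\frac12}$ contribution. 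The term $2\langle \delta^c_t (u_c)^{q-\frac12},(u_c)^{q-\frac12}\rangle$ telescopes to $\frac{1}{\tau_c}(\|(u_c)^q\|^2 - \|(u_c)^{q-1}\|^2)$ by the standard identity $2\langle a-b,a+b\rangle = 2(\|a\|^2-\|b\|^2)$.

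The heart of the matter is the remaining cluster $-2\mu\langle\delta^c_t(w_c)^{q-\frac12},(u_c)^{q-\frac12}\rangle - \frac{h^2}{3}\langle\Delta_x(w_c)^{q-\frac12},(u_c)^{q-\frac12}\rangle - 2\lambda\langle(w_c)^{q-\frac12},(u_c)^{q-\frac12}\rangle$, and this is exactly what Lemma \ref{lem-f} is designed for, with $v=(u_c)$, $w=(w_c)$, $S\equiv 0$ (since the scheme \eqref{eq-pbm5b} has no truncation term). Applying \eqref{eq-dlttwo} with $S\equiv 0$ gives
\[
\langle\delta^c_t(w_c)^{q-\frac12},(u_c)^{q-\frac12}\rangle = -\frac{1}{2\tau_c}\Bigl[\bigl(|(u_c)^q|_1^2 - |(u_c)^{q-1}|_1^2\bigr) + \tfrac{h^2}{12}\bigl(\|(w_c)^q\|^2 - \|(w_c)^{q-1}\|^2\bigr) - \tfrac{h^4}{144}\bigl(|(w_c)^q|_1^2 - |(w_c)^{q-1}|_1^2\bigr)\Bigr],
\]
which telescopes cleanly and produces precisely the $\mu$-bracket in $\mathring E^q$. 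For the $\lambda$ term I would use \eqref{eq-lemd1} of Lemma \ref{lem-d} with $S\equiv 0$, giving $\langle(w_c)^{q-\frac12},(u_c)^{q-\frac12}\rangle = -|(u_c)^{q-\frac12}|_1^2 - \tfrac{h^2}{12}\|(w_c)^{q-\frac12}\|^2 + \tfrac{h^4}{144}|(w_c)^{q-\frac12}|_1^2$, so $-2\lambda\langle(w_c)^{q-\frac12},(u_c)^{q-\frac12}\rangle$ contributes $2\lambda\bigl(|(u_c)^{q-\frac12}|_1^2 + \tfrac{h^2}{12}\|(w_c)^{q-\frac12}\|^2 - \tfrac{h^4}{144}|(w_c)^{q-\frac12}|_1^2\bigr)$ — the summand in the dissipation sum, once multiplied by $\tau_c$ after clearing denominators. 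Finally, the $\frac{h^2}{3}\Delta_x$ term must vanish: by \eqref{eq-lemd3} with $S\equiv 0$ we get $\langle\Delta_x(w_c)^{q-\frac12},(u_c)^{q-\frac12}\rangle = 0$, so it drops out entirely.

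Collecting everything, multiplying through by $\tau_c$, and summing over $q$ from $1$ to any fixed index (all the first-difference brackets telescope, the $|\cdot|_1^2$ and $\|\cdot\|^2$ pieces of the dissipation term accumulate into the stated sum $\sum_{n=1}^q$), I obtain $\mathring E^q - \mathring E^0 = 0$. The main obstacle I anticipate is bookkeeping: getting the sign conventions and the factor $\frac{h^2}{12}$ versus $\frac{h^2}{6}$ versus $\frac{h^2}{3}$ to line up correctly, and in particular verifying that the $\Psi$-antisymmetry genuinely annihilates the $-\frac{h^2}{2}\Psi((w_c)^{q-\frac12},(u_c)^{q-\frac12})$ term when tested against $(u_c)^{q-\frac12}$ — this requires $\langle\Psi(v,w),w\rangle=0$ to hold with $v=(w_c)$ in the first argument, which is exactly the form stated in Lemma \ref{lem-b}, so it is fine, but it is the one place where a careless reader could go wrong. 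No inequalities are needed anywhere; every step is an exact identity, which is what makes the energy genuinely conserved rather than merely bounded.
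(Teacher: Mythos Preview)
Your proposal is correct and follows essentially the same route as the paper: test \eqref{eq-pbm5a} against $(u_c)^{q-\frac12}$, kill the $\Psi$ and $\Delta_x$ terms via Lemma~\ref{lem-b} and \eqref{eq-lemd3}, apply \eqref{eq-dlttwo} of Lemma~\ref{lem-f} with $S=0$ for the $\mu$-term and \eqref{eq-lemd1} of Lemma~\ref{lem-d} with $S=0$ for the $\lambda$-term, then telescope and sum. The only cosmetic difference is your extra factor of $2$ at the outset; all identities and signs are as in the paper.
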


\begin{proof}
Taking an inner product of \eqref{eq-pbm4a} with $(u_c)^{q-\frac{1}{2}}$, we get
\begin{equation}\label{eq-inproderg}
    \begin{aligned}
        &\left\langle \delta^c_t  (u_c)^{q-\frac{1}{2}}, (u_c)^{q-\frac{1}{2}}\right\rangle - \mu \left\langle \delta^c_t (w_c)^{q-\frac{1}{2}}, (u_c)^{q-\frac{1}{2}}\right\rangle + \left\langle\Psi\left((u_c)^{q-\frac{1}{2}},(u_c)^{q-\frac{1}{2}}\right) , (u_c)^{q-\frac{1}{2}}\right\rangle \\ 
        & \quad - \frac{  h^2}{2}\left\langle\Psi\left((w_c)^{q-\frac{1}{2}},(u_c)^{q-\frac{1}{2}}\right) , (u_c)^{q-\frac{1}{2}}\right\rangle 
        +\left\langle \Delta_x (u_c)^{q-\frac{1}{2}}, (u_c)^{q-\frac{1}{2}} \right\rangle- \frac{ h^2}{6}\left\langle\Delta_x (w_c)^{q-\frac{1}{2}}, (u_c)^{q-\frac{1}{2}} \right\rangle \\ 
        &\quad - \lambda \left\langle (w_c)^{q-\frac{1}{2}}, (u_c)^{q-\frac{1}{2}} \right\rangle = 0, \quad 1\leq q\leq N_c.
    \end{aligned}
\end{equation}
By Lemma \ref{lem-b} and taking $S=0$ in \eqref{eq-lemd3} of Lemma \ref{lem-d}, we have
\begin{equation*}
    \left\langle\Psi\left(~\cdot~,(u_c)^{q-\frac{1}{2}}\right) , (u_c)^{q-\frac{1}{2}}\right\rangle = 0,\quad \left\langle \Delta_x (u_c)^{q-\frac{1}{2}}, (u_c)^{q-\frac{1}{2}} \right\rangle=0,\quad \left\langle\Delta_x (w_c)^{q-\frac{1}{2}}, (u_c)^{q-\frac{1}{2}} \right\rangle=0.
\end{equation*}

Regarding the remaining inner product terms, we proceed to analyze them step by step. It is straightforward to see that
\begin{equation*}
    \left\langle \delta^c_t  (u_c)^{q-\frac{1}{2}}, (u_c)^{q-\frac{1}{2}}\right\rangle = \frac{1}{2\tau_c}\left( \Vert (u_c)^{q} \Vert^2 - \Vert (u_c)^{q-1} \Vert^2 \right).
\end{equation*}
Let $S=0$ in \eqref{eq-dlttwo} of Lemma \ref{lem-f} and using \eqref{eq-lemd1} of Lemma \ref{lem-d}, we can yield
\begin{equation*}
    \begin{aligned}
    - \mu \left\langle \delta^c_t (w_c)^{q-\frac{1}{2}}, (u_c)^{q-\frac{1}{2}}\right\rangle = &\frac{\mu}{2\tau_c}\bigg[ \left( |(u_c)^q|_{1}^2 - |(u_c)^{q-1}|_{1}^2 \right) + \frac{h^2}{12}\left( \Vert (w_c)^q \Vert^2 - \Vert (w_c)^{q-1} \Vert^2\right) \\
    &- \frac{h^4}{144}\left( |(w_c)^q|_{1}^2 - |(w_c)^{q-1}|_{1}^2 \right) \bigg], 
    \end{aligned}
\end{equation*}
and 
\begin{equation*}
    - \lambda \left\langle (w_c)^{q-\frac{1}{2}}, (u_c)^{q-\frac{1}{2}} \right\rangle = \lambda \left(|(u_c)^{q-\frac{1}{2}}|_{1}^2 + \frac{h^2}{12}\Vert (w_c)^{q-\frac{1}{2}} \Vert^2 - \frac{h^4}{144} |(w_c)^{q-\frac{1}{2}}|_{1}^2\right).
\end{equation*}
Substituting the above results into \eqref{eq-inproderg}, and changing upper index $q$ to $n$. Finally,  summing $n$ from $1$ to $q$, thereby we obtain
\begin{equation*}
    \begin{aligned}
    \frac{1}{2\tau_c}\left( \Vert (u_c)^{q} \Vert^2 - \Vert (u_c)^0 \Vert^2 \right) &+ \frac{\mu}{2\tau_c}\left[ \left( |(u_c)^q|_{1}^2 - |(u_c)^0|_{1}^2 \right) + \frac{h^2}{12}\left( \Vert (w_c)^q \Vert^2 - \Vert (w_c)^0 \Vert^2\right) - \frac{h^4}{144}\left( |(w_c)^q|_{1}^2 - |(w_c)^0|_{1}^2 \right) \right] \\
    &+\lambda \sum_{n=1}^{q}\left(|(u_c)^{n-\frac{1}{2}}|_{1}^2 + \frac{h^2}{12}\Vert (w_c)^{n-\frac{1}{2}} \Vert^2 - \frac{h^4}{144} |(w_c)^{n-\frac{1}{2}}|_{1}^2\right)=0,
    \end{aligned}
\end{equation*}  
which yields the results.
\end{proof}

\begin{corly}
    For the Step \Rmnum{3}, a conservative invariant is also possessed by the linearized scheme \eqref{eq-pbm5a}-\eqref{eq-pbm5d}. Denote 
    \begin{equation}\label{eq-numinvariant}
        \begin{aligned}
        E^k = \Vert u^k \Vert^2 &+ \mu \left( |u^k|_1^2 + \frac{h^2}{12}\Vert w^k \Vert^2 - \frac{h^4}{144} | w^k |_1^2 \right) \\
        &+ 2\lambda\tau_f \sum_{n=1}^{k}\left( |u^{k-\frac{1}{2}}|_1^2 + \frac{h^2}{12}\Vert w^{n-\frac{1}{2}}\Vert^2 - \frac{h^4}{144}| w^{n-\frac{1}{2}} |_1^2 \right), \quad 0\leq k\leq N_f,
        \end{aligned}
    \end{equation}
    we can easily derive that $E^k = E^0$, where its proof is almost the same as Theorem \ref{thm-2}.
\end{corly}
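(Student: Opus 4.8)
The plan is to repeat, \emph{mutatis mutandis}, the argument of Theorem \ref{thm-2}, the point being that the linearization performed in Step \Rmnum{3} keeps intact exactly the skew-symmetric structure that makes the convective contributions drop out. Concretely, I would take the discrete inner product of \eqref{eq-pbm6a} with $u^{k-\frac{1}{2}}$, arriving at an identity of the same shape as \eqref{eq-inproderg} but with $(u_c),(w_c),\delta^c_t$ replaced by $u,w,\delta^f_t$, and with the two convective brackets now reading $\langle \Psi((u_f)^{k-\frac{1}{2}},u^{k-\frac{1}{2}}),u^{k-\frac{1}{2}}\rangle$ and $\langle \Psi((w_f)^{k-\frac{1}{2}},u^{k-\frac{1}{2}}),u^{k-\frac{1}{2}}\rangle$.

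The crucial observation is that the identity $\langle \Psi(v,w),w\rangle = 0$ in Lemma \ref{lem-b} holds for an \emph{arbitrary} first argument $v$; hence both convective brackets vanish irrespective of whether $v=(u_f)^{k-\frac{1}{2}}$ or $v=(w_f)^{k-\frac{1}{2}}$. This is precisely why replacing the nonlinearity by its linearized surrogate does not spoil the conservation law, and it is the only place where the reasoning genuinely departs from that of Theorem \ref{thm-2}. The two remaining brackets $\langle \Delta_x u^{k-\frac{1}{2}},u^{k-\frac{1}{2}}\rangle$ and $\langle \Delta_x w^{k-\frac{1}{2}},u^{k-\frac{1}{2}}\rangle$ vanish by Lemma \ref{lem-b} and by \eqref{eq-lemd3} with $S=0$, exactly as before.

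For the surviving terms I would reuse the same tools: $\langle \delta^f_t u^{k-\frac{1}{2}},u^{k-\frac{1}{2}}\rangle = \frac{1}{2\tau_f}\big(\Vert u^k\Vert^2-\Vert u^{k-1}\Vert^2\big)$; the mixed term $-\mu\langle \delta^f_t w^{k-\frac{1}{2}},u^{k-\frac{1}{2}}\rangle$ is rewritten through \eqref{eq-dlttwo} with $S=0$ (using \eqref{eq-pbm6b}) combined with \eqref{eq-lemd1}, producing a telescoping difference of $|u^k|_1^2+\frac{h^2}{12}\Vert w^k\Vert^2-\frac{h^4}{144}|w^k|_1^2$; and $-\lambda\langle w^{k-\frac{1}{2}},u^{k-\frac{1}{2}}\rangle$ is turned by \eqref{eq-lemd1} with $S=0$ into $\lambda\big(|u^{k-\frac{1}{2}}|_1^2+\frac{h^2}{12}\Vert w^{k-\frac{1}{2}}\Vert^2-\frac{h^4}{144}|w^{k-\frac{1}{2}}|_1^2\big)$. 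Summing over $n$ from $1$ to $k$ and multiplying by $2\tau_f$ collapses the first two groups into $E^k-E^0$ minus the dissipation sum, while the $\lambda$-terms assemble exactly the dissipation sum appearing in \eqref{eq-numinvariant}; hence $E^k=E^0$.

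Since every ingredient is already in place and the one structural subtlety (vanishing of the convective brackets for a general first argument) is immediate from Lemma \ref{lem-b}, I do not expect any real obstacle. The only step that calls for a little care is the bookkeeping of the $h^2$- and $h^4$-weighted contributions, so that after summation they regroup into precisely the form of $E^k$ written in \eqref{eq-numinvariant}.
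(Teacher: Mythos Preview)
Your proposal is correct and follows exactly the approach the paper intends: the Corollary itself merely asserts that the argument of Theorem \ref{thm-2} carries over verbatim, and your sketch does precisely that, correctly isolating the one new observation that $\langle \Psi(v,w),w\rangle=0$ from Lemma \ref{lem-b} applies with arbitrary first argument, so the linearized convective terms still vanish. The remaining bookkeeping with \eqref{eq-lemd1}, \eqref{eq-lemd3}, and \eqref{eq-dlttwo} at $S=0$ is identical to the paper's proof of Theorem \ref{thm-2}.
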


\begin{rmk}
    Benefiting from the inequality in Lemma \ref{lem-a}, we have 
    \begin{equation*}
        \mathring{E}^q \geq  \Vert (u_c)^q \Vert^2 + \mu \left( |(u_c)^q|_1^2 + \frac{h^2}{18}\Vert (w_c)^q \Vert^2 \right)+ 2\lambda\tau_c \sum_{n=1}^{q}\left( |(u_c)^{n-\frac{1}{2}}|_1^2 + \frac{h^2}{18}\Vert (w_c)^{n-\frac{1}{2}}\Vert^2 \right) \geq  \Vert (u_c)^q \Vert^2 \geq 0.
    \end{equation*}
    It implies that the boundedness of $\Vert (u_c)^q \Vert$, i.e., there exists a constant $C$ such that
    \begin{equation}\label{eq-numsoltbnd}
        \Vert (u_c)^q \Vert \leq C, \quad 0\leq q\leq N_c.
    \end{equation}
    Similarly, we also have $\Vert u^k \Vert \leq C$ for $0\leq k\leq N_f$.
\end{rmk}

\subsection{Unique solvability}\label{sec5}
In what follows, we shall prove the unique solvability of solutions of TTCD scheme. Since the Step \Rmnum{2} only involves interpolation, its unique solvability is clear. Thus, we only need to consider the unique solvability of Steps \Rmnum{1} and \Rmnum{3}.

\begin{thm}
    The Step \Rmnum{1} (NCD) \eqref{eq-pbm5a}-\eqref{eq-pbm5d} admits a unique solution.  
\end{thm}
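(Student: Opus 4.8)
The plan is to argue by induction on the coarse time level $q$: assuming $(u_c)^{q-1},(w_c)^{q-1}$ are already determined, I show the Step \Rmnum{1} scheme \eqref{eq-pbm5a}--\eqref{eq-pbm5d} fixes $(u_c)^q,(w_c)^q$ uniquely. Relation \eqref{eq-pbm5b} decouples: written as $(I+\tfrac{h^2}{12}\delta_{xx})(w_c)^q=\delta_{xx}(u_c)^q$, and using $\langle\delta_{xx}v,v\rangle=-|v|_1^2$ (Lemma \ref{lem-b}) together with $|v|_1^2\le\tfrac{4}{h^2}\|v\|^2$ (Lemma \ref{lem-a}), the operator $I+\tfrac{h^2}{12}\delta_{xx}$ obeys $\langle(I+\tfrac{h^2}{12}\delta_{xx})v,v\rangle\ge\tfrac23\|v\|^2$ and is thus symmetric positive definite on $\mathcal W_h$; hence $(w_c)^q$ is a well-defined linear image $\mathcal A(u_c)^q$ of $(u_c)^q$, and the base case $q=0$ is immediate since $(u_c)^0=\phi$. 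For $q\ge1$ I substitute $(w_c)^q=\mathcal A(u_c)^q$ and $(w_c)^{q-\frac12}=\mathcal A(u_c)^{q-\frac12}$ into \eqref{eq-pbm5a}, reducing that step to a single nonlinear equation $\Pi(v)=0$ for the unknown $v:=(u_c)^{q-\frac12}\in\mathcal W_h$, namely $\Pi(v)=\tfrac{2}{\tau_c}v-\tfrac{2\mu}{\tau_c}\mathcal A v+\Psi(v,v)-\tfrac{h^2}{2}\Psi(\mathcal A v,v)+\Delta_x v-\tfrac{h^2}{6}\Delta_x\mathcal A v-\lambda\mathcal A v-g$, where $g=\tfrac{2}{\tau_c}(u_c)^{q-1}-\tfrac{2\mu}{\tau_c}(w_c)^{q-1}$ is known; $\Pi$ is a continuous (polynomial) self-map of the finite-dimensional space $\mathcal W_h$, and $(u_c)^q=2v-(u_c)^{q-1}$.

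For existence I invoke the Brouwer fixed-point theorem in the form: if $\langle\Pi(v),v\rangle>0$ whenever $\|v\|=r$, then $\Pi$ has a zero in the ball of radius $r$. Pairing $\Pi(v)$ with $v$, the transport and convective terms vanish, $\langle\Psi(v,v),v\rangle=\langle\Psi(\mathcal A v,v),v\rangle=\langle\Delta_x v,v\rangle=0$ by Lemma \ref{lem-b} and $\langle\Delta_x\mathcal A v,v\rangle=0$ by \eqref{eq-lemd3} of Lemma \ref{lem-d} with $S=0$, while the dispersive and dissipative terms are non-negative, $-\tfrac{2\mu}{\tau_c}\langle\mathcal A v,v\rangle\ge0$ and $-\lambda\langle\mathcal A v,v\rangle\ge0$ by \eqref{eq-lemd2} of Lemma \ref{lem-d} with $S=0$; hence $\langle\Pi(v),v\rangle\ge\tfrac{2}{\tau_c}\|v\|^2-\|g\|\,\|v\|$, which is positive on the sphere $\|v\|=r$ once $r>\tfrac{\tau_c}{2}\|g\|$, yielding a solution $v^{*}$ and therefore $(u_c)^q$.

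For uniqueness I take two solutions $v_1,v_2$ of $\Pi(v)=0$ and set $\theta=v_1-v_2$. Subtracting, using bilinearity of $\Psi$ and linearity of $\mathcal A,\Delta_x$, and pairing with $\theta$: the terms $\langle\Psi(v_1,\theta),\theta\rangle$, $\langle\Psi(\mathcal A v_1,\theta),\theta\rangle$, $\langle\Delta_x\theta,\theta\rangle$, $\langle\Delta_x\mathcal A\theta,\theta\rangle$ drop by Lemmas \ref{lem-b}, \ref{lem-d}; the terms $-\tfrac{2\mu}{\tau_c}\langle\mathcal A\theta,\theta\rangle$, $-\lambda\langle\mathcal A\theta,\theta\rangle$ become $\tfrac{2\mu}{\tau_c}\mathcal B(\theta)$, $\lambda\mathcal B(\theta)$ with $\mathcal B(\theta):=|\theta|_1^2+\tfrac{h^2}{12}\|\mathcal A\theta\|^2-\tfrac{h^4}{144}|\mathcal A\theta|_1^2\ge|\theta|_1^2\ge0$ (by \eqref{eq-lemd1}--\eqref{eq-lemd2} with $S=0$); and the $g$-contributions cancel, leaving
\begin{equation*}
\frac{2}{\tau_c}\|\theta\|^2+\Big(\frac{2\mu}{\tau_c}+\lambda\Big)\mathcal B(\theta)=-\langle\Psi(\theta,v_2),\theta\rangle+\frac{h^2}{2}\langle\Psi(\mathcal A\theta,v_2),\theta\rangle .
\end{equation*}
Using Lemma \ref{lem-e}, Lemma \ref{lem-b}, and the a priori bounds $\|v_2\|\le C$, $|v_2|_1\le C$, $\|v_2\|_\infty\le C$ (valid for any already-constructed solution by the conservation identity of Theorem \ref{thm-2} and \eqref{eq-numsoltbnd}, together with Lemma \ref{lem-a}), I would bound $|\langle\Psi(\theta,v_2),\theta\rangle|\le C|\theta|_1\|\theta\|$ and, exploiting the factor $h^2$ to absorb the discrete second derivative hidden in $\mathcal A\theta$ via the relations among $\|\mathcal A\theta\|$, $|\mathcal A\theta|_1$, $|\theta|_1$ in Lemmas \ref{lem-d}--\ref{lem-f}, $\tfrac{h^2}{2}|\langle\Psi(\mathcal A\theta,v_2),\theta\rangle|\le\tfrac{\lambda}{4}\mathcal B(\theta)+C|\theta|_1\|\theta\|$; a Young inequality $C|\theta|_1\|\theta\|\le\tfrac{\lambda}{4}\mathcal B(\theta)+C\|\theta\|^2$ then absorbs the gradient part into the left side and yields $\tfrac{2}{\tau_c}\|\theta\|^2\le C\|\theta\|^2$ with $C$ independent of $\tau_c$, whence $\theta=0$ provided $\tau_c$ is small enough — a restriction that is harmless and that careful bookkeeping of the constants may remove entirely. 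Uniqueness of $(u_c)^q$, and then of $(w_c)^q$, follows, closing the induction.

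I expect the genuine difficulty to be this last estimate. In the conservation proof of Theorem \ref{thm-2} the nonlinearity vanishes identically through $\langle\Psi(v,w),w\rangle=0$, but in the uniqueness argument the perturbation $\theta$ does not occupy the annihilating slot, so $\langle\Psi(\theta,v_2),\theta\rangle$ and the compact correction $\tfrac{h^2}{2}\langle\Psi(\mathcal A\theta,v_2),\theta\rangle$ must be bounded by hand and then dominated by the dissipation $\lambda\mathcal B(\theta)$ and the $\tfrac{1}{\tau_c}$-term; getting the $h$-dependence of the $\mathcal A\theta$ term right, so that no mesh restriction (or only an innocuous one) is needed, is the delicate point. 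The remaining ingredients — the positive-definite decoupling of $w$, the Brouwer coercivity computation, and the level-by-level bookkeeping following Remark \ref{rmk-1} — are routine given the discrete identities of Section \ref{sec2}.
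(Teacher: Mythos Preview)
Your proposal is correct and follows essentially the same route as the paper: existence via the Browder/Brouwer coercivity argument after rewriting the step in terms of the half-level unknown $v=(u_c)^{q-\frac12}$, and uniqueness via the energy method applied to the difference of two solutions. The paper carries out the existence computation in the same way (obtaining $\langle\mathscr F(u_c),u_c\rangle\ge\tfrac{2}{\tau_c}\|u_c\|(\|u_c\|-\|(u_c)^{q-1}\|)$ and invoking the Browder lemma from \cite{akrivis1991fully}), but for uniqueness it gives no details and simply refers to \cite{wang2021pointwise}; your sketch of the uniqueness step, including the identification of the residual terms $\langle\Psi(\theta,v_2),\theta\rangle$ and $\tfrac{h^2}{2}\langle\Psi(\mathcal A\theta,v_2),\theta\rangle$ and their absorption via the a priori bounds furnished by the conservation identity, is exactly the kind of argument that reference supplies, and your acknowledged smallness condition on $\tau_c$ is the expected one.
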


\begin{proof}
    we first prove the existence of solution.
    Denote $(u_c)_{p}:=(u_c)_{p}^{q-\frac{1}{2}}$, $(w_c)_{p}:=(w_c)_{p}^{q-\frac{1}{2}}$. It is easy to know that

    \begin{equation}\label{eq-newuw}
        (u_c)^{q}_{p} = 2(u_c)_{p}-(u_c)^{q-1}_{p},\quad  (w_c)^{q}_{p} = 2(w_c)_{p}-(w_c)^{q-1}_{p},\quad 1\leq p\leq M.
    \end{equation}
    By means of the equality relation \eqref{eq-pbm5b} and \eqref{eq-pbm5d}, we can determine the values of $(u_c)^0$ and $(w_c)^0$. Furthermore, suppose $(u_c)^{q-1},(w_c)^{q-1}$ are known, 
    and substitute the equality \eqref{eq-newuw} into \eqref{eq-pbm5a}-\eqref{eq-pbm5d}, then we get a new system about $u_c$ and $w_c$ as follows:
    \begin{equation}\label{eq-pbmuni1}
        \left\{
        \begin{aligned}
        &\frac{2}{\tau_{c}}\left((u_c)_p - (u_c)^{q-1}_p \right) - \frac{2\mu}{\tau_{c}}\left((w_c)_p - (w_c)^{q-1}_p \right) + \Psi\left((u_c)_p,(u_c)_p\right) - \frac{h^2}{2}\Psi\left((w_c)_p,(u_c)_p\right)   \\
        &\quad + \Delta_x (u_c)_p - \frac{h^2}{6}\Delta_x (w_c)_p - \lambda (w_c)_p=0, \quad 1\leq p\leq M,  \\ 
        &(w_c)_p = \delta_{xx} (u_c)_p - \frac{h^2}{12}\delta_{xx} (w_c)_p,\quad 1\leq p\leq M, \\ 
        &(u_c)_p = (u_c)_{p+M}, \quad (w_c)_p = (w_c)_{p+M}, \quad 1\leq p\leq M.
        \end{aligned}
        \right.
    \end{equation}
    Next, we define the nonlinear operator $\mathscr{F}(u_c):\mathcal{W}_h \to \mathcal{W}_h$:
    \begin{equation*}
        \begin{aligned}
            \mathscr{F}\left((u_c)_p\right)=&
            \frac{2}{\tau_{c}}\left((u_c)_p - (u_c)^{q-1}_p \right) - \frac{2\mu}{\tau_{c}}\left((w_c)_p - (w_c)^{q-1}_p \right) + \Psi\left((u_c)_p,(u_c)_p\right) - \frac{h^2}{2}\Psi\left((w_c)_p,(u_c)_p\right) \\ 
            &+  \Delta_x (u_c)_p - \frac{h^2}{6}\Delta_x (w_c)_p - \lambda (w_c)_p, \quad 1\leq p\leq M, 
        \end{aligned}
    \end{equation*} 
    and take an inner product of $\mathscr{F}(u_c)$ with $u_c$ as follows: 
    \begin{equation*}
        \begin{aligned}
            \langle \mathscr{F}\left(u_c\right),  u_c \rangle &= \frac{2}{\tau_{c}}\left( \Vert u_c \Vert^2 - \langle (u_c)^{q-1},  u_c \rangle \right) - \frac{2\mu}{\tau_{c}} \langle w_c -  (w_c)^{q-1},  u_c \rangle - \lambda \langle w_c,u_c\rangle \\ 
            &+ \langle \Psi\left(u_c,u_c\right),  u_c \rangle - \frac{h^2}{2}\langle \Psi\left(w_c,u_c\right),  u_c \rangle  + \langle \Delta_x u_c,u_c\rangle - \frac{h^2}{6} \langle\Delta_x w_c,u_c\rangle.  \\
        \end{aligned}
    \end{equation*}
    
    According to the Browder theorem [Lemma 3.1, \citenum{akrivis1991fully}], if there exists a positive constant $\alpha$ such that for every $u_c \in\mathcal{W}_h$ with $\Vert u_c \Vert=\alpha$, it holds $\langle \mathscr{F}(u_c),  u_c \rangle\geq 0$, then the system \eqref{eq-pbmuni1} possesses at least one solution.
    By the Lemma \ref{lem-b}, and let $S=0$ in \eqref{eq-lemd2} of Lemma \ref{lem-d}, we yield
    \begin{equation*}
        \begin{aligned}
            \langle \mathscr{F}(u_c),  u_c \rangle &\geq \frac{2}{\tau_{c}}\left[ \Vert u_c \Vert^2 - \Vert (u_c)^{q-1} \Vert \cdot \Vert u_c \Vert \right] + \frac{2\mu}{\tau_{c}}\left( |u_c|_{1}^2 + \frac{h^2}{18}\Vert w_c-(w_c)^{q-1} \Vert^2 \right) + \lambda \left( |u_c|_{1}^2 + \frac{h^2}{18}\Vert w_c \Vert^2 \right) \\
            &\geq \frac{2}{\tau_{c}}\left[ \Vert u_c \Vert (\Vert u_c \Vert - \Vert (u_c)^{q-1} \Vert) \right].
        \end{aligned}
    \end{equation*}
    when $\alpha = \Vert (u_c)^{q-1} \Vert$, then for any $\Vert u_c \Vert=\alpha$, we have $\langle \mathscr{F}(u_c), u_c \rangle \geq 0$. It means that the system NCD scheme \eqref{eq-pbm5a}-\eqref{eq-pbm5d} has at least one solution.

    Assume the system \eqref{eq-pbmuni1} exists two different solutions, then we can easily prove the uniqueness of solution by combining the energy method with contradiction. The details of uniqueness can refer to [Theorem 3.4, \citenum{wang2021pointwise}].  This completes the proof.
\end{proof}

\begin{thm}
The Step \Rmnum{3} \eqref{eq-pbm6a}-\eqref{eq-pbm6d} admits a unique solution.
\end{thm}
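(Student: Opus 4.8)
The plan is to argue by induction on the fine time level $k$, exploiting that Step \Rmnum{3} is a \emph{linear} system in the unknowns $(u^k,w^k)$ once the interpolated coefficients $(u_f)^{k-\frac12}$, $(w_f)^{k-\frac12}$ produced by Step \Rmnum{2} are treated as given data. Since each time level amounts to a finite-dimensional square linear system, existence and uniqueness are equivalent, so it suffices to show that the associated homogeneous system has only the trivial solution. The base case is immediate: $u^0=\phi(x_p)$ by \eqref{eq-pbm6d}, and $w^0$ is then uniquely determined from \eqref{eq-pbm6b} because $I+\frac{h^2}{12}\delta_{xx}$ is invertible; indeed, by Lemma \ref{lem-a}, $\langle (I+\frac{h^2}{12}\delta_{xx})v,v\rangle=\|v\|^2-\frac{h^2}{12}|v|_1^2\geq\frac23\|v\|^2$, so the operator is positive definite.

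For the inductive step, assume $u^{k-1},w^{k-1}$ are already uniquely determined. Setting the level-$(k-1)$ data and the initial data to zero reduces \eqref{eq-pbm6a}--\eqref{eq-pbm6b} to the homogeneous system for $(u^k,w^k)$:
\begin{equation*}
\begin{cases}
\dfrac{1}{\tau_f}u^k-\dfrac{\mu}{\tau_f}w^k+\dfrac12\Psi\!\left((u_f)^{k-\frac12},u^k\right)-\dfrac{h^2}{4}\Psi\!\left((w_f)^{k-\frac12},u^k\right)+\dfrac12\Delta_x u^k-\dfrac{h^2}{12}\Delta_x w^k-\dfrac{\lambda}{2}w^k=0,\\[2mm]
w^k=\delta_{xx}u^k-\dfrac{h^2}{12}\delta_{xx}w^k,
\end{cases}
\end{equation*}
using bilinearity of $\Psi$ and $u^{k-\frac12}=\frac12 u^k$, $w^{k-\frac12}=\frac12 w^k$. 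Taking the discrete inner product of the first equation with $u^k$, I would annihilate the convection-type terms via Lemma \ref{lem-b}: $\langle\Psi(v,u^k),u^k\rangle=0$ for \emph{any} grid function $v$ (in particular $v=(u_f)^{k-\frac12}$ and $v=(w_f)^{k-\frac12}$), $\langle\Delta_x u^k,u^k\rangle=0$, and, by \eqref{eq-lemd3} of Lemma \ref{lem-d} with $S=0$, $\langle\Delta_x w^k,u^k\rangle=0$. For the dispersion and dissipation terms I would invoke \eqref{eq-lemd2} of Lemma \ref{lem-d} with $S=0$, giving $-\langle w^k,u^k\rangle\geq|u^k|_1^2+\frac{h^2}{18}\|w^k\|^2\geq0$, so that $-\frac{\mu}{\tau_f}\langle w^k,u^k\rangle$ and $-\frac{\lambda}{2}\langle w^k,u^k\rangle$ are both nonnegative. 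What remains is $\frac{1}{\tau_f}\|u^k\|^2\leq0$, forcing $u^k=0$; substituting back into the second equation and using invertibility of $I+\frac{h^2}{12}\delta_{xx}$ yields $w^k=0$. Hence the homogeneous system has only the trivial solution, the level-$k$ system is uniquely solvable, and the induction closes.

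The argument is genuinely routine precisely because the scheme is linearized; the one point that deserves care — and the only place a nonlinear scheme would behave differently — is that the frozen-coefficient nonlinear terms $\Psi((u_f)^{k-\frac12},u^k)$ and $\Psi((w_f)^{k-\frac12},u^k)$ must be tested against their \emph{second} argument $u^k$ so that the skew-symmetry identity $\langle\Psi(v,w),w\rangle=0$ applies verbatim, which is exactly why Step \Rmnum{3} was constructed with $(u_f)$, $(w_f)$ placed in the first slot of $\Psi$. One could alternatively cite the uniqueness argument of [Theorem 3.4, \citenum{wang2021pointwise}], but the direct energy computation above is short enough to present in full.
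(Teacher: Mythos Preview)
Your proposal is correct and follows essentially the same route as the paper: induction on $k$, reduction to the homogeneous linear system at each step, testing against $u^k$, killing the $\Psi$ and $\Delta_x$ terms via Lemma~\ref{lem-b} and \eqref{eq-lemd3}, and using \eqref{eq-lemd2} with $S=0$ for the $\langle w^k,u^k\rangle$ contributions. The only cosmetic difference is that the paper reads off $w^k=0$ directly from the combined inequality $\tfrac{1}{\tau_f}\|u^k\|^2+(\tfrac{\mu}{\tau_f}+\tfrac{\lambda}{2})(|u^k|_1^2+\tfrac{h^2}{18}\|w^k\|^2)\leq0$, whereas you first deduce $u^k=0$ and then invoke invertibility of $I+\tfrac{h^2}{12}\delta_{xx}$; both are fine.
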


\begin{proof}
    Firstly, the $u^0$ and $w^0$ can be computed by applying \eqref{eq-pbm6b} and \eqref{eq-pbm6d}. Moreover, suppose $u^{k-1},w^{k-1}$ are known, then the problem \eqref{eq-pbm6a}-\eqref{eq-pbm6d} is a nonhomogeneous linear system about $u^k$, where the coefficient matrix is a $M \times M$ matrix. Now, considering the following corresponding homogeneous system,
    \begin{subequations}
        \begin{numcases}{}
            \frac{1}{\tau_f}u_p^{k} - \frac{\mu}{\tau_f}w_p^{k} + \frac{1}{2}\Psi\left((u_f)_p^{k-\frac{1}{2}},u_p^k\right)- \frac{h^2}{4}\Psi\left((w_f)_p^{k-\frac{1}{2}},u_p^k\right) \notag  \\
            \quad +  \frac{1}{2} \Delta_x u_p^k - \frac{h^2}{12}\Delta_x w_p^k - \frac{\lambda}{2} w_p^k = 0, \quad 1\leq p\leq M, \label{eq-Lpbm1a} \\
            w_p^{k} = \delta_{xx} u_p^k - \frac{h^2}{12}\delta_{xx} w_p^k , \quad 1\leq p\leq M. \label{eq-Lpbm1b}
        \end{numcases}
    \end{subequations}
Taking an inner product of \eqref{eq-Lpbm1a} with $u^k$, and using Lemma \ref{lem-b}, we have
\begin{equation*}
    \frac{1}{\tau_f}\Vert u^k \Vert^2 - \left(\frac{\mu}{\tau_f}+\frac{\lambda}{2}\right)\langle w^k,u^k \rangle-\frac{h^2}{12}\langle \Delta_x w^k,u^k \rangle = 0.
\end{equation*}
Let $S=0$ in \eqref{eq-lemd2} and \eqref{eq-lemd3} of Lemma \ref{lem-d}, then we get
\begin{equation*}
    \frac{1}{\tau_f}\Vert u^k \Vert^2 + \left(\frac{\mu}{\tau_f}+\frac{\lambda}{2}\right)\left( |u^k|_{1}^2  + \frac{h^2}{18}\Vert w^k \Vert^2\right)\leq 0,
\end{equation*}
it mean that $u^k=0$ and $w^k=0$. Thus, the corresponding nonhomogeneous linear system admits a unique solution, and the proof is completed.
\end{proof}

\section{Convergence and stability analysis}\label{sec6}
This section is divided into three parts to analyze the error estimates of the TTCD scheme in the maximum norm. Finally, the stability result of the TTCD scheme is presented.

\subsection{Error estimate for Step \Rmnum{1}}
Now we proceed to prove the convergence of the scheme \eqref{eq-pbm5a}-\eqref{eq-pbm5d}.
Define the following notation:
\begin{equation*}
(e_c)_p^q = U_p^q - (u_c)_p^q, \quad (\rho_c)_p^q = W_p^q-(w_c)_p^q , \quad 1\leq p\leq M, \ 0\leq q \leq N_c,
\end{equation*}
and the constant
\begin{equation}\label{eq-notan2}
C_0 = \max_{(x,t)\in \mathbb{R}\times [0,T]}{{|u(x,t)|,|u_x(x,t)|,|u_{xx}(x,t)|,|u_{xxx}(x,t)|}}.
\end{equation}

Taking $\eta = c$ in scheme \eqref{eq-pbm3a}-\eqref{eq-pbm3b}, and subtracting the scheme \eqref{eq-pbm5a}-\eqref{eq-pbm5d} from it, we obtain the error system
\begin{subequations}\label{eq-errpbm1}
\begin{numcases}{}
    \delta^{c}_t (e_c)_p^{q-\frac{1}{2}} - \mu\delta^{c}_t (\rho_c)_p^{q-\frac{1}{2}}  + \Psi\left(U_p^{q-\frac{1}{2}},U_p^{q-\frac{1}{2}}\right) - \Psi\left((u_c)_p^{q-\frac{1}{2}},(u_c)_p^{q-\frac{1}{2}}\right) \notag \\
    \quad - \frac{h^2}{2} \left[\Psi\left(W_p^{q-\frac{1}{2}},U_p^{q-\frac{1}{2}}\right) - \Psi\left((w_c)_p^{q-\frac{1}{2}},(u_c)_p^{q-\frac{1}{2}}\right)\right] + \Delta_x (e_c)_p^{q-\frac{1}{2}} - \frac{h^2}{6}\Delta_x (\rho_c)_p^{q-\frac{1}{2}}  \notag \\
    \quad - \lambda (\rho_c)_p^{q-\frac{1}{2}}  = (R_c)_p^{q-\frac{1}{2}}, \quad 1\leq p\leq M, \quad 0\leq q\leq N_{c},  \label{eq-errpbm1a}\\
    (\rho_c)_p^q = \delta_{xx} (e_c)_p^q - \frac{h^2}{12}\delta_{xx} (\rho_c)_p^q + (Q_c)_p^q,\quad 1\leq p\leq M, \quad 0\leq q\leq N_{c}, \label{eq-errpbm1b} \\
    (e_c)_p^q = (e_c)_{p+M}^q, \quad (\rho_c)_p^q = (\rho_c)_{p+M}^q, \quad 1\leq p\leq M, \quad 0\leq q\leq N_{c}. \label{eq-errpbm1c} 
\end{numcases}
\end{subequations}

\begin{thm}\label{thm-1} 
       Suppose that $\{u(x,t), w(x,t) \mid (x,t) \in \Omega \times [0,T] \}$ is the exact solution of problem \eqref{eq-bbmbnew}-\eqref{eq-twordfm}, and let $\{(u_c)_p^q, (w_c)p^q \mid 1 \leq p \leq M, \; 0 \leq q \leq N_c \}$ be the numerical solution of the coarse-grid problem \eqref{eq-pbm5a}-\eqref{eq-pbm5d}.
If $2C_1 \tau_c < 1/3$ and $\tau_c^2 + h^4 \leq 1/C$, where $C_1$ is defined in \eqref{eq-twoconsta}, then we have
\begin{equation}\label{eq-cvg1}
\lvert (e_c)^q \rvert_{1} \leq C (\tau_c^2 + h^4), \qquad 0 \leq q \leq N_c.
\end{equation}
\end{thm}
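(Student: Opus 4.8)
The plan is to apply the discrete energy method to the error system \eqref{eq-errpbm1a}--\eqref{eq-errpbm1c}, combined with a discrete Gronwall inequality and an induction argument to control the nonlinear terms. First I would take the inner product of \eqref{eq-errpbm1a} with $(e_c)^{q-\frac{1}{2}}$. The linear terms are handled by the earlier lemmas: $\langle \delta^c_t(e_c)^{q-\frac12},(e_c)^{q-\frac12}\rangle$ telescopes to $\tfrac{1}{2\tau_c}(\|(e_c)^q\|^2-\|(e_c)^{q-1}\|^2)$; the term $-\mu\langle\delta^c_t(\rho_c)^{q-\frac12},(e_c)^{q-\frac12}\rangle$ is treated by \eqref{eq-dlttwo} of Lemma \ref{lem-f} with $S=(Q_c)$, producing the telescoping $|(e_c)|_1^2$, $\|(\rho_c)\|^2$, $|(\rho_c)|_1^2$ combination plus $\langle(Q_c)^{l-\frac12},\delta^c_t(\rho_c)^{l-\frac12}\rangle$-type remainders; the term $-\lambda\langle(\rho_c)^{q-\frac12},(e_c)^{q-\frac12}\rangle$ is handled via \eqref{eq-lemd2} of Lemma \ref{lem-d} with $S=(Q_c)$, yielding a nonnegative $\lambda(|(e_c)|_1^2+\tfrac{h^2}{18}\|(\rho_c)\|^2)$ contribution up to $(Q_c)$ terms; and $\langle\Delta_x(e_c)^{q-\frac12},(e_c)^{q-\frac12}\rangle=0$ while $\langle\Delta_x(\rho_c)^{q-\frac12},(e_c)^{q-\frac12}\rangle$ is controlled by \eqref{eq-lemd3}. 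The truncation terms $\langle(R_c)^{q-\frac12},(e_c)^{q-\frac12}\rangle$ and the $(Q_c)$ remainders are bounded using \eqref{eq-ErrEst}, Cauchy--Schwarz, and Lemma \ref{lem-a}.

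The core difficulty is the nonlinear difference $\langle\Psi(U^{q-\frac12},U^{q-\frac12})-\Psi((u_c)^{q-\frac12},(u_c)^{q-\frac12}),(e_c)^{q-\frac12}\rangle$ together with the $h^2$-scaled term involving $\Psi(W^{q-\frac12},U^{q-\frac12})-\Psi((w_c)^{q-\frac12},(u_c)^{q-\frac12})$. I would split these bilinearly, e.g. $\Psi(U,U)-\Psi(u_c,u_c)=\Psi(e_c,U)+\Psi(u_c,e_c)$, and use the skew-symmetry $\langle\Psi(v,w),w\rangle=0$ from Lemma \ref{lem-b} to kill the $\langle\Psi(u_c,e_c),(e_c)\rangle$ piece; the surviving $\langle\Psi(e_c,U),(e_c)\rangle$ is then estimated by Lemma \ref{lem-e} together with the bounds $C_0$ on the exact solution (via \eqref{eq-notan2}) and $|\cdot|_1$, producing a bound of the form $C(|(e_c)^{q-\frac12}|_1^2+|(\rho_c)^{q-\frac12}|_1^2+\dots)$; here is where the constant $C_1$ of \eqref{eq-twoconsta} enters, and the smallness condition $2C_1\tau_c<1/3$ is exactly what is needed so that, after summing $n=1$ to $q$ and isolating the $q$-th term, the coefficient of $|(e_c)^q|_1^2$ stays positive and bounded below. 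The terms scaled by $h^2/2$ and $h^2/6$ are absorbed into the $\tfrac{h^2}{18}\|(\rho_c)\|^2$ reservoir coming from Lemmas \ref{lem-d} and \ref{lem-f}, using $h\le L$ and Lemma \ref{lem-a} to trade $\|\rho_c\|$ against $h|\rho_c|_1$ where necessary; this is the delicate bookkeeping step that makes the compact ($h^4$) terms harmless.

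After collecting everything, multiplying by $2\tau_c$ and summing over $n$ from $1$ to $q$, and using $(e_c)^0=0$, $(\rho_c)^0=O(h^4)$ (from \eqref{eq-errpbm1b} at $q=0$ with $(e_c)^0=0$), I expect an inequality of the schematic form
\begin{equation*}
\mathcal{E}^q \le C(\tau_c^2+h^4)^2 + C\tau_c\sum_{n=1}^{q}\mathcal{E}^n,
\end{equation*}
where $\mathcal{E}^q \sim \|(e_c)^q\|^2+|(e_c)^q|_1^2+\tfrac{h^2}{18}\|(\rho_c)^q\|^2$ is the discrete energy. The smallness hypotheses guarantee the $n=q$ term can be moved to the left with a positive coefficient, after which the discrete Gronwall inequality gives $\mathcal{E}^q\le C(\tau_c^2+h^4)^2$ uniformly in $q$, hence $|(e_c)^q|_1\le C(\tau_c^2+h^4)$. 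One subtlety requiring care: the nonlinear estimates implicitly need an a priori bound on $|(u_c)^{q-\frac12}|_1$ or $\|(u_c)^{q-\frac12}\|_\infty$; I would either invoke the conservation bound \eqref{eq-numsoltbnd} (which controls $\|(u_c)^q\|$ but not $|(u_c)^q|_1$) or run the whole argument as an induction on $q$, assuming \eqref{eq-cvg1} holds up to level $q-1$ so that $|(u_c)^{q-1}|_1 \le |U^{q-1}|_1 + C(\tau_c^2+h^4) \le C$, and closing the induction at level $q$ using the condition $\tau_c^2+h^4\le 1/C$. The main obstacle is precisely this interlocking of the nonlinear bound, the $h^2$-weighted compact terms, and the induction hypothesis; once the right combination of Lemmas \ref{lem-a}, \ref{lem-d}, \ref{lem-e}, \ref{lem-f} is assembled, the remainder is routine.
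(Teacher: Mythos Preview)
Your overall plan is sound and would prove the estimate, but it differs from the paper's proof in one structural choice: you take the inner product of the error equation with $(e_c)^{q-\frac12}$, whereas the paper tests against $\delta^c_t(e_c)^{q-\frac12}$. This swaps the roles of the $\mu$- and $\lambda$-terms. In your version the $\mu\langle\delta^c_t(\rho_c)^{q-\frac12},(e_c)^{q-\frac12}\rangle$ term (via \eqref{eq-dlttwo}) produces the telescoping energy and the $\lambda$-term supplies dissipation at level $q-\tfrac12$; in the paper it is the $\lambda\langle(\rho_c)^{q-\frac12},\delta^c_t(e_c)^{q-\frac12}\rangle$ term (via \eqref{eq-dltone}) that telescopes, while the $\mu$-term gives control of $|\delta^c_t(e_c)^{q-\frac12}|_1^2$. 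Your choice of test function is actually advantageous for the nonlinear terms: the skew-symmetry $\langle\Psi(v,w),w\rangle=0$ kills $\langle\Psi((u_c)^{q-\frac12},(e_c)^{q-\frac12}),(e_c)^{q-\frac12}\rangle$ and $\langle\Psi((w_c)^{q-\frac12},(e_c)^{q-\frac12}),(e_c)^{q-\frac12}\rangle$ outright, so the surviving pieces $\langle\Psi((e_c)^{q-\frac12},U^{q-\frac12}),(e_c)^{q-\frac12}\rangle$ and $h^2\langle\Psi((\rho_c)^{q-\frac12},U^{q-\frac12}),(e_c)^{q-\frac12}\rangle$ depend only on the exact solution bounds $C_0$ and the error quantities --- no $\|(u_c)^{q-1}\|_\infty$ bound is required, and hence no induction and no hypothesis $\tau_c^2+h^4\le 1/C$ are actually needed in your route (the ``subtlety'' you flag at the end is a non-issue with your test function). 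By contrast, since the paper tests against $\delta^c_t(e_c)^{q-\frac12}$, the skew-symmetry does not apply directly; the paper instead writes $(e_c)^{q-\frac12}=(e_c)^{q-1}+\tfrac{\tau_c}{2}\delta^c_t(e_c)^{q-\frac12}$ to recover a partial cancellation, and this is precisely why the induction hypothesis on $|(e_c)^{q-1}|_1$ and the smallness condition $\tau_c^2+h^4\le 1/C$ enter. The specific constant $C_1$ in \eqref{eq-twoconsta} is an artifact of the paper's bookkeeping with that test function; your argument would produce a different (but equally harmless) Gronwall constant. The paper's choice does have the benefit of matching the Step~\Rmnum{3} analysis, where the linearized nonlinearity $\Psi((u_f),u)$ no longer enjoys the clean cancellation against $e^{k-\frac12}$ and testing with $\delta^f_t e^{k-\frac12}$ becomes more natural.
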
   

\begin{proof}
If $q=0$, \eqref{eq-cvg1} is obviously valid, since the initial value is given. By taking an inner product of $(\rho_c)^0$ with \eqref{eq-errpbm1b}, and utilizing Lemmas \ref{lem-a} and \ref{lem-b}, we have 
$$
    \Vert (\rho_c)^0 \Vert^2 = - \frac{h^2}{12}\left\langle \delta_{xx} (\rho_c)^0, (\rho_c)^0 \right\rangle + \left\langle (Q_c)^0, (\rho_c)^0 \right\rangle ~ \Longrightarrow ~ \Vert (\rho_c)^0 \Vert^2 \leq  \frac{9}{4}\Vert (Q_c)^0 \Vert^2.
$$
Then, matching the estimates \eqref{eq-ErrEst} for $\eta=c$ , we get
\begin{equation}\label{eq-rhoes0}
    \left\Vert (\rho_c)^0 \right\Vert^2 \leq \frac{9}{4}L\left(Ch^4\right)^2.
\end{equation}
Now, we assume \eqref{eq-cvg1} holds for  $0\leq q \leq n-1$ $(1 \leq n \leq N_c)$. When $\tau_c^2+h^4\leq 1/C$
and using Lemma \ref{lem-a}, we have 
\begin{equation}\label{eq-errall1}
    \left|(e_c)^{q}\right|_{1} \leq 1, \quad \left\Vert (e_c)^{q} \right\Vert \leq \frac{L}{\sqrt{6}}, \quad \left\Vert (e_c)^{q} \right\Vert_{\infty} \leq \frac{\sqrt{L}}{2}, \quad  0\leq q\leq n-1.
\end{equation}

In what follows, we begin to verify that \eqref{eq-cvg1} holds for $q=n$. Taking an inner product of \eqref{eq-errpbm1a} with $\delta^c_t (e_c)^{q-\frac{1}{2}}$, we yield
\begin{equation}\label{eq-inproduct2}
    \begin{aligned}
        &\Vert \delta^c_t (e_c)^{q-\frac{1}{2}} \Vert^2 - \mu \left\langle \delta^c_t(\rho_c)^{q-\frac{1}{2}} , \delta^c_t (e_c)^{q-\frac{1}{2}} \right\rangle 
        +  \left\langle\Psi\left(U^{q-\frac{1}{2}},U^{q-\frac{1}{2}}\right) - \Psi\left((u_c)^{q-\frac{1}{2}},(u_c)^{q-\frac{1}{2}}\right), \delta^c_t (e_c)^{q-\frac{1}{2}} \right\rangle \\
        &- \frac{ h^2}{2} \left\langle \Psi\left(W^{q-\frac{1}{2}},U^{q-\frac{1}{2}}\right) - \Psi\left((w_c)^{q-\frac{1}{2}},(u_c)^{q-\frac{1}{2}}\right), \delta^c_t (e_c)^{q-\frac{1}{2}}\right\rangle - \lambda \left\langle (\rho_c)^{q-\frac{1}{2}},\delta^c_t (e_c)^{q-\frac{1}{2}} \right\rangle \\
        &+\left\langle \Delta_x (e_c)^{q-\frac{1}{2}}, \delta^c_t (e_c)^{q-\frac{1}{2}} \right\rangle -\frac{h^2}{6} \left\langle \Delta_x (\rho_c)^{q-\frac{1}{2}}, \delta^c_t (e_c)^{q-\frac{1}{2}}\right\rangle = \left\langle (R_c)^{q-\frac{1}{2}},  \delta^c_t (e_c)^{q-\frac{1}{2}}\right\rangle, \, 1\leq q\leq n.
    \end{aligned}
\end{equation}

Firstly, we estimate the term containing the operator $\Delta_x $. By the Cauchy-Schwarz inequality and Young's inequality, we have 
\begin{equation}\label{eq-alphaes1}
    \begin{aligned}
        -\left\langle \Delta_x (e_c)^{q-\frac{1}{2}}, \delta^c_t (e_c)^{q-\frac{1}{2}} \right\rangle 
        &\leq \frac{1}{8}\Vert \delta^c_t (e_c)^{q-\frac{1}{2}} \Vert^2 + 2|(e_c)^{q-\frac{1}{2}}|_{1}^2, \\
        \frac{h^2}{6}\left\langle \Delta_x (\rho_c)^{q-\frac{1}{2}}, \delta^c_t (e_c)^{q-\frac{1}{2}} \right\rangle &\leq \frac{1}{8}\Vert \delta^c_t (e_c)^{q-\frac{1}{2}} \Vert^2 + \frac{h^4}{18}|(\rho_c)^{q-\frac{1}{2}}|_{1}^2.
    \end{aligned}
\end{equation}

Secondly, according to \eqref{eq-lemd2} in Lemma \ref{lem-d}, we get
\begin{equation*}
    \begin{aligned}
    \mu\left\langle \delta^c_t(\rho_c)^{q-\frac{1}{2}} , \delta^c_t (e_c)^{q-\frac{1}{2}} \right\rangle \leq& -\mu|\delta^c_t (e_c)^{q-\frac{1}{2}}|_{1}^2 -\frac{\mu h^2}{18}\Vert \delta^c_t(\rho_c)^{q-\frac{1}{2}} \Vert^2  \\
    &+ \frac{\mu h^2}{12}\left\langle \delta^c_t (\rho_c)^{q-\frac{1}{2}}, \delta^c_t (Q_c)^{q-\frac{1}{2}} \right\rangle + \mu\left\langle \delta^c_t (Q_c)^{q-\frac{1}{2}}, \delta^c_t (e_c)^{q-\frac{1}{2}} \right\rangle.
    \end{aligned}
\end{equation*}
By applying Lemma \ref{lem-a}, Cauchy-Schwarz inequality and Young's inequality, the results as follows: 
\begin{equation*}
    \begin{aligned}
        -&\mu|\delta^c_t (e_c)^{q-\frac{1}{2}}|_{1}^2 \leq - \frac{6\mu}{L^2}\Vert \delta^c_t (e_c)^{q-\frac{1}{2}} \Vert^2 ,\\
        &\mu\left\langle \delta^c_t (Q_c)^{q-\frac{1}{2}}, \delta^c_t (e_c)^{q-\frac{1}{2}} \right\rangle  \leq \frac{6\mu}{L^2}\Vert \delta^c_t (e_c)^{q-\frac{1}{2}} \Vert^2 + \frac{L^2\mu}{24}\Vert \delta^c_t (Q_c)^{q-\frac{1}{2}} \Vert^2, \\
        &\frac{\mu h^2}{12} \left\langle \delta^c_t (\rho_c)^{q-\frac{1}{2}}, \delta^c_t (Q_c)^{q-\frac{1}{2}} \right\rangle \leq \frac{\mu h^2}{18} \Vert\delta^c_t (\rho_c)^{q-\frac{1}{2}}\Vert^2 + \frac{\mu h^2}{32} \Vert\delta^c_t (Q_c)^{q-\frac{1}{2}}\Vert^2. 
    \end{aligned}
\end{equation*}
Thus 
\begin{equation}\label{eq-mues1}
    \mu\left\langle \delta^c_t (\rho_c)^{q-\frac{1}{2}} , \delta^c_t (e_c)^{q-\frac{1}{2}} \right\rangle \leq \mu\left( \frac{h^2}{32} + \frac{L^2}{24} \right) \Vert \delta^c_t (Q_c)^{q-\frac{1}{2}} \Vert^2.
\end{equation}

Next, we estimate the terms containing the operator $\Psi$. Note that $(e_c)^{q-\frac{1}{2}}=(e_c)^{q-1}+ \frac{\tau_c}{2}\delta^c_t (e_c)^{q-\frac{1}{2}} $.
with the help of Lemma \ref{lem-b}, it holds that 
\begin{equation*}
        \left\langle\Psi(~ \cdot ~,  (e_c)^{q-\frac{1}{2}} ), \delta^c_t (e_c)^{q-\frac{1}{2}}\right\rangle 
        = \left\langle\Psi( ~\cdot~,  (e_c)^{q-1} ), \delta^c_t (e_c)^{q-\frac{1}{2}}\right\rangle.
\end{equation*}
Sequentially applying the definition of $\Psi$, Lemmas \ref{lem-a} and \ref{lem-e}, \eqref{eq-notan2}, and Cauchy-Schwarz inequality, we can conclude that
\begin{equation*}
    \begin{aligned}
        -&\left\langle \Psi\left(U^{q-\frac{1}{2}},U^{q-\frac{1}{2}}\right) - \Psi\left((u_c)^{q-\frac{1}{2}},(u_c)^{q-\frac{1}{2}}\right), \delta^c_t (e_c)^{q-\frac{1}{2}} \right\rangle \\
        \leq&  C_0 \left( |(e_c)^{q-\frac{1}{2}}|_{1} + \Vert  (e_c)^{q-\frac{1}{2}} \Vert \right) \Vert \delta^c_t (e_c)^{q-\frac{1}{2}} \Vert \\
        &+ \frac{1}{3}\left( 2\Vert (e_c)^{q-\frac{1}{2}} \Vert_{\infty} \cdot |(e_c)^{q-1}|_{1} + \Vert (e_c)^{q-1} \Vert_{\infty} \cdot |(e_c)^{q-\frac{1}{2}}|_{1} \right)\Vert \delta^c_t (e_c)^{q-\frac{1}{2}} \Vert, 
    \end{aligned}
\end{equation*}
and 
\begin{equation*}
    \begin{aligned}
        &\frac{ h^2}{2}\left\langle \Psi\left(W^{q-\frac{1}{2}},U^{q-\frac{1}{2}}\right) - \Psi\left((w_c)^{q-\frac{1}{2}},(u_c)^{q-\frac{1}{2}}\right), \delta^c_t (e_c)^{q-\frac{1}{2}} \right\rangle \\
        \leq &  \frac{ h^2}{6}\left( 2C_0|(e_c)^{q-\frac{1}{2}}|_1 + C_0 \Vert (e_c)^{q-\frac{1}{2}} \Vert + 2C_0 \Vert (\rho_c)^{q-\frac{1}{2}} \Vert + C_0 |(\rho_c)^{q-\frac{1}{2}}|_1\right) \Vert \delta^c_t (e_c)^{q-\frac{1}{2}} \Vert \\
        &+ \frac{ h^2}{6}\left( 2\Vert (\rho_c)^{q-\frac{1}{2}} \Vert_{\infty} \cdot |(e_c)^{q-1}|_1 + \Vert (e_c)^{q-1} \Vert_{\infty} \cdot |(\rho_c)^{q-\frac{1}{2}}|_1\right) \Vert \delta^c_t (e_c)^{q-\frac{1}{2}} \Vert, \\ 
    \end{aligned}
\end{equation*}
where the derivation details of this two inequalities can refer to [(3.49) and (3.50), \citenum{wang2021pointwise}].
Following the flexible application of Cauchy-Schwarz inequality, Young's inequality, and \eqref{eq-errall1}, we yield
\begin{equation}\label{eq-Psies1}
    \begin{aligned}
        &-\left\langle \Psi\left(U^{q-\frac{1}{2}},U^{q-\frac{1}{2}}\right) - \Psi\left((u_c)^{q-\frac{1}{2}},(u_c)^{q-\frac{1}{2}}\right), \delta^c_t (e_c)^{q-\frac{1}{2}} \right\rangle \\
        \leq & C_0  \left( |(e_c)^{q-\frac{1}{2}}|_1  + \Vert  (e_c)^{q-\frac{1}{2}} \Vert \right) \Vert \delta^c_t (e_c)^{q-\frac{1}{2}} \Vert  + \frac{ \sqrt{L}}{2} |(e_c)^{q-\frac{1}{2}}|_1 \cdot \Vert \delta^c_t (e_c)^{q-\frac{1}{2}} \Vert \\
        \leq &  \frac{1}{4} \Vert \delta^c_t (e_c)^{q-\frac{1}{2}} \Vert^2 + 2\left(  C_0 + \frac{\sqrt{L}}{2} \right)^2 |(e_c)^{q-\frac{1}{2}}|_1^2 + 2C_0^2  \Vert  (e_c)^{q-\frac{1}{2}} \Vert^2 \\
        \leq&   \frac{1}{4} \Vert \delta^c_t (e_c)^{q-\frac{1}{2}} \Vert^2 + \left( 4 C_0^2  +  L + \frac{C_0^2 L^2}{3}\right) |(e_c)^{q-\frac{1}{2}}|_1^2 .
    \end{aligned}
\end{equation}
Similarly,  we have
\begin{equation}\label{eq-Psies2}
    \begin{aligned}
        &\frac{h^2}{2}\left\langle \Psi\left(W^{q-\frac{1}{2}},U^{q-\frac{1}{2}}\right) - \Psi\left((w_c)^{q-\frac{1}{2}},(u_c)^{q-\frac{1}{2}}\right), \delta^c_t (e_c)^{q-\frac{1}{2}} \right\rangle  \\
        \leq & \frac{ h^2}{6}\left( 2C_0 + \frac{C_0 L}{\sqrt{6}} \right)|(e_c)^{q-\frac{1}{2}}|_1 \cdot \Vert \delta^c_t (e_c)^{q-\frac{1}{2}} \Vert + \frac{ h^2}{6}\left( 2C_0 + \frac{2C_0}{h} + \frac{3\sqrt{L}}{h} \right)\Vert (\rho_c)^{q-\frac{1}{2}} \Vert \cdot \Vert \delta^c_t (e_c)^{q-\frac{1}{2}} \Vert \\
        \leq & \frac{1}{4} \Vert \delta^c_t (e_c)^{q-\frac{1}{2}} \Vert^2 + \frac{ h^4}{18}\left( 2C_0 + \frac{C_0 L}{\sqrt{6}} \right)^2|(e_c)^{q-\frac{1}{2}}|_1^2 + \frac{ h^4}{18}\left(\frac{4C_0}{h} + \frac{3\sqrt{L}}{h} \right)^2 \Vert (\rho_c)^{q-\frac{1}{2}} \Vert^2 \\
        \leq & \ \frac{1}{4} \Vert \delta^c_t (e_c)^{q-\frac{1}{2}} \Vert^2 + \left( \frac{4C_0^2 }{9} + \frac{C_0^2 L^2}{54} \right) |(e_c)^{q-\frac{1}{2}}|_1^2 + h^2\left( \frac{16C_0^2 }{9} + L  \right)\Vert (\rho_c)^{q-\frac{1}{2}} \Vert^2.
    \end{aligned}
\end{equation}

Finally, by substituting \eqref{eq-alphaes1}-\eqref{eq-Psies2} into \eqref{eq-inproduct2}, and invoking Lemma \ref{lem-f}, we obtain
\begin{equation}\label{eq-productes1}
    \begin{split}
        &\Vert \delta^c_t (e_c)^{q-\frac{1}{2}} \Vert^2 + \frac{\lambda}{2\tau_c}\left[ \left( |(e_c)^q|_{1}^2 - |(e_c)^{q-1}|_{1}^2 \right) + \frac{h^2}{12}\left( \Vert (\rho_c)^q \Vert^2 - \Vert (\rho_c)^{q-1} \Vert^2\right) - \frac{h^4}{144}\left( |(\rho_c)^q|_{1}^2 - |(\rho_c)^{q-1}|_{1}^2 \right) \right] \\
        \leq & \frac{3}{4}\Vert \delta^c_t (e_c)^{q-\frac{1}{2}} \Vert^2 + \left(L + \frac{40 C_0^2 }{9} + \frac{19 C_0^2 L^2}{54} \right) |(e_c)^{q-\frac{1}{2}}|_1^2 +  h^2\left( \frac{16C_0^2  }{9} + L  \right)\Vert (\rho_c)^{q-\frac{1}{2}} \Vert^2\\
        &+ \frac{\lambda h^2}{12}\left\langle (\rho_c)^{q-\frac{1}{2}},\delta^c_t (Q_c)^{q-\frac{1}{2}} \right\rangle + \lambda\left\langle (Q_c)^{q-\frac{1}{2}}, \delta^c_t (e_c)^{{q-\frac{1}{2}}} \right\rangle + \left\langle (R_c)^{q-\frac{1}{2}},  \delta^c_t (e_c)^{q-\frac{1}{2}}\right\rangle \\
        &+ \mu\left( \frac{h^2}{32} + \frac{L^2}{24} \right) \Vert \delta^c_t (Q_c)^{q-\frac{1}{2}} \Vert
        + 2|(e_c)^{q-\frac{1}{2}}|_1^2 + \frac{ h^4}{18}|(\rho_c)^{q-\frac{1}{2}}|_1^2\\
        \leq & \Vert \delta^c_t (e_c)^{q-\frac{1}{2}} \Vert^2 + \left(L + \frac{40 C_0^2  }{9} + \frac{19 C_0^2 L^2 }{54} + 2 \right) |(e_c)^{q-\frac{1}{2}}|_1^2 + \frac{h^2 }{9} \left(16C_0^2 + 9L + 3 \right)\Vert (\rho_c)^{q-\frac{1}{2}} \Vert^2 \\
        &+ \left( \frac{\lambda^2}{64} + \frac{\mu }{32} + \frac{\mu L^2}{24} \right) \Vert \delta^c_t (Q_c)^{q-\frac{1}{2}}\Vert^2 
        + 2\lambda^2\Vert (Q_c)^{q-\frac{1}{2}}\Vert^2 + 2\Vert (R_c)^{q-\frac{1}{2}}\Vert^2, \quad 1\leq q\leq n.  \\
    \end{split}
\end{equation}

According to Lemma \ref{lem-a}, we have
\begin{equation*}
    \frac{h^2}{9}\Vert (\rho_c)^{q-\frac{1}{2}} \Vert^2 = \frac{h^2}{18} \left(\Vert (\rho_c)^{q} \Vert^2 + \Vert (\rho_c)^{q-1}\Vert^2 \right)
    \leq \frac{h^2}{12} \left(\Vert (\rho_c)^q \Vert^2 + \Vert (\rho_c)^{q-1} \Vert^2\right)- \frac{h^4}{144}\left(|(\rho_c)^q|_{1}^2 + |(\rho_c)^{q-1}|_{1}^2\right).
\end{equation*}
Thus, if we set 
\begin{equation*}
    G^q = |(e_c)^q|_{1}^2 + \frac{h^2}{12}\Vert (\rho_c)^q \Vert^2 - \frac{h^4}{144}|(\rho_c)^q|_{1}^2,\quad 0 \leq q \leq N_c,
\end{equation*}
and use the truncation errors \eqref{eq-ErrEst} for $\eta=c$,
then \eqref{eq-productes1} can be simplified as 
\begin{equation}\label{eq-simproductes1}
   \begin{aligned}
        \frac{1}{2\tau_c}(G^q - G^{q-1}) 
        \leq C_1(G^q+G^{q-1}) +C_2(\tau_c^2+h^4)^2,  \quad 1 \leq q \leq n,
   \end{aligned} 
\end{equation}
where the constants 
\begin{equation}\label{eq-twoconsta}
    \begin{aligned}
    &C_1 = \max\left\{ \frac{L}{2\lambda} + \frac{20 C_0^2}{9\lambda} + \frac{19 C_0^2 L^2 }{108\lambda} + \frac{1}{\lambda}~,~ \frac{16C_0^2}{\lambda} + \frac{9L}{\lambda} + \frac{3}{\lambda}\right\}, \\
    &C_2 = \left( \frac{\lambda}{64} + \frac{\mu }{32\lambda} + \frac{\mu L^2}{24\lambda} + 2\lambda + \frac{2}{\lambda}   \right)C^2 L.
    \end{aligned}
\end{equation}
For the index $q$ of \eqref{eq-simproductes1}, by summing up from $1$ to $n$, we yield
\begin{equation*}
    \begin{aligned}
        G^{n} - G^0 
        &\leq 2C_1\tau_c G^{n} + 4C_1\tau_c \sum_{q=0}^{n-1} G^q + 2C_2 \tau_c \left( \tau_c^2 + h^4 \right)^2.
    \end{aligned}
\end{equation*}
Here, we also need to analyze the positivity of $G^0$. According to Lemma \ref{lem-a} and \eqref{eq-rhoes0}, it holds that 
\begin{equation}\label{eq-esG0a}
       0 \leq \frac{h^2}{18}\Vert (\rho_c)^0 \Vert^2 \leq G^0 = \frac{h^2}{12}\Vert (\rho_c)^0 \Vert^2 - \frac{h^4}{144}|(\rho_c)^0|_{1}^2 \leq \Vert (\rho_c)^0 \Vert^2 \leq \frac{9}{4}L\left(Ch^4\right)^2.
\end{equation}
Moreover, when $2C_1\tau_c<1/3$, we also have 
\begin{equation*}
    G^{n} 
    \leq 6C_1\tau_c \sum_{q=0}^{n-1} G^q + \frac{3}{2}G^0 + \frac{C_2}{2C_1}\left( \tau_c^2 + h^4 \right)^2.
\end{equation*}
Applying the Grownwall inequality [Lemma, \citenum{Sloan1986time}] and Lemma \ref{lem-a}, then we obtain 
\begin{equation*}
    |(e_c)^{n}|_{1}^2 \leq G^{n} \leq  \exp(6C_1 T)\cdot\left[ \frac{3}{2}G^0 + \frac{C_2}{2C_1} \left( \tau_c^2 + h^4 \right)^2 \right] \leq C^2\left( \tau_c^2 + h^4 \right)^2,
\end{equation*}
which implies that the proof of \eqref{eq-cvg1} has been completed.
\end{proof}

\begin{corly}
    According to Lemma \ref{lem-a} and \eqref{eq-cvg1}, we can derive the $L^2$ norm and maximum norm error estimates on the coarse grid, i.e.,
        \begin{align}
            &\Vert (e_c)^q \Vert \leq \frac{L}{\sqrt{6}}|(e_c)^q|_{1} \leq \frac{L C}{\sqrt{6}} (\tau_c^2 + h^4), \quad 0\leq q \leq N_c, \label{eq-errL2} \\
            &\Vert (e_c)^q \Vert_{\infty} \leq \frac{\sqrt{L}}{2}|(e_c)^q|_{1} \leq \frac{C\sqrt{L}}{2} (\tau_c^2 + h^4),\quad 0\leq q \leq N_c. \label{eq-errmax} 
        \end{align}
\end{corly}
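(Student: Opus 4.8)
The plan is to obtain both estimates directly from Theorem~\ref{thm-1} together with the discrete embedding inequalities of Lemma~\ref{lem-a}, with no new energy argument required. First I would note that for each $0\le q\le N_c$ the error grid function $(e_c)^q = U^q-(u_c)^q$ lies in $\mathcal{W}_h$, being the difference of two spatially $M$-periodic grid functions (equivalently, by the periodicity in \eqref{eq-errpbm1c}); consequently the inequalities of Lemma~\ref{lem-a} may be applied with the choice $w=(e_c)^q$.

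Next I would specialise the two norm-comparison estimates from Lemma~\ref{lem-a}, namely $\Vert w\Vert\le \tfrac{L}{\sqrt 6}\,|w|_1$ and $\Vert w\Vert_\infty\le \tfrac{\sqrt L}{2}\,|w|_1$, to $w=(e_c)^q$, and then insert the $|\cdot|_1$-error bound $|(e_c)^q|_1\le C(\tau_c^2+h^4)$ already established in \eqref{eq-cvg1}. Chaining the two inequalities in each case yields $\Vert (e_c)^q\Vert\le \tfrac{LC}{\sqrt 6}(\tau_c^2+h^4)$ and $\Vert (e_c)^q\Vert_\infty\le \tfrac{C\sqrt L}{2}(\tau_c^2+h^4)$, which are exactly \eqref{eq-errL2} and \eqref{eq-errmax}.

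Since this is purely a corollary of the preceding theorem, there is no genuine obstacle; the only point I would record carefully is that the constant appearing here is the same generic $C$ produced in Theorem~\ref{thm-1}, up to the fixed multiplicative factors $L/\sqrt 6$ and $\sqrt L/2$, so it remains independent of the step sizes $\tau_c$ and $h$ under the standing hypotheses $2C_1\tau_c<1/3$ and $\tau_c^2+h^4\le 1/C$. No summation in $q$ or discrete Gr\"onwall step is needed beyond what was already carried out in the proof of Theorem~\ref{thm-1}.
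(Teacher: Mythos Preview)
Your proposal is correct and matches the paper's approach exactly: the corollary is stated as an immediate consequence of Lemma~\ref{lem-a} applied to $(e_c)^q\in\mathcal{W}_h$ together with the bound \eqref{eq-cvg1}, and the paper provides no further argument beyond this. Your observation that the constants remain independent of the step sizes is the only point worth recording, and you have done so.
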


\subsection{Error estimate for Step \Rmnum{2}}
Before stating the error estimates for the interpolation formulas \eqref{eq-timeintepo}-\eqref{eq-stwintepo}, we first introduce the following notation:
\begin{equation*}
    (e_f)_p^k =  U_p^k - (u_f)_p^k, \quad (\rho_f)_p^k = W_p^k-(w_f)_p^k , \quad 1\leq p\leq M, \ 0\leq k \leq N_f.
\end{equation*}

\begin{thm}
    Suppose $\{u(x,t),w(x,t) \mid (x,t)\in \Omega\times [0,T]  \}$ is the exact solution of problem \eqref{eq-bbmbnew}-\eqref{eq-twordfm} and $\{(u_f)_p^k, (w_f)_p^k \mid 1\leq p\leq M, ~ 0\leq k \leq N_f \}$ is the rough solution on the fine grid obtained by the interpolation formulas \eqref{eq-timeintepo}-\eqref{eq-stwintepo}.
    If $u(x,t) \in C^{4,2}(\bar{\Omega}\times [0,T])$, then
    \begin{equation}\label{eq-cvg2}
        \|(e_f)^k\| \leq C (\tau_c^2 + h^4), \quad \|(e_f)^k\|_{\infty} \leq \widetilde{C}, \quad  0\leq k \leq N_f,
    \end{equation}
    where the positive constants $C$, $\widetilde{C} $ are independent of $\tau_c$ and $h$.
\end{thm}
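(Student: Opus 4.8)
The plan is to turn the explicit temporal interpolation formula \eqref{eq-timeintepo} into an error decomposition in which every piece is either the classical remainder of linear interpolation in time or a quantity already controlled by Theorem \ref{thm-1}. Fix $0\le k\le N_f$ and write $k=m\beta_\tau+r$ with integers $0\le m\le N_c$ and $0\le r\le\beta_\tau-1$ (so $m\le N_c-1$ whenever $r\ge 1$, hence $(e_c)^{m+1}$ is always defined). By \eqref{eq-timeintepo} and the identity $(u_f)_p^{m\beta_\tau}=(u_c)_p^m$ recorded after it,
\begin{equation*}
 (u_f)_p^k=\Big(1-\tfrac{r}{\beta_\tau}\Big)(u_c)_p^{m}+\tfrac{r}{\beta_\tau}(u_c)_p^{m+1},
\end{equation*}
which for $r=0$ reduces to $(u_c)_p^{m}$. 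Introducing the linear-in-time interpolant of the exact solution $\widehat U_p^{k}:=\big(1-\tfrac{r}{\beta_\tau}\big)U_p^{m}+\tfrac{r}{\beta_\tau}U_p^{m+1}$, where $U_p^{m},U_p^{m+1}$ are the nodal values of $u$ at the coarse times $(t_c)_m,(t_c)_{m+1}$, we split
\begin{equation*}
 (e_f)_p^k=\big(U_p^k-\widehat U_p^k\big)+\Big(1-\tfrac{r}{\beta_\tau}\Big)(e_c)_p^{m}+\tfrac{r}{\beta_\tau}(e_c)_p^{m+1}.
\end{equation*}

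For the first bracket, $u(x_p,\cdot)\in C^2[0,T]$ is part of the hypothesis $u\in C^{4,2}$, so the standard remainder for linear Lagrange interpolation on $[(t_c)_m,(t_c)_{m+1}]$ gives $|U_p^k-\widehat U_p^k|\le\tfrac18\tau_c^2\max_{[0,T]}|u_{tt}(x_p,\cdot)|\le C\tau_c^2$ uniformly in $p,k$; multiplying by $h$ and summing over $p$ yields $\|U^k-\widehat U^k\|\le C\tau_c^2$ and $\|U^k-\widehat U^k\|_\infty\le C\tau_c^2$. For the remaining two brackets the coefficients $1-\tfrac{r}{\beta_\tau}$ and $\tfrac{r}{\beta_\tau}$ lie in $[0,1]$ and sum to $1$, so by the triangle inequality and the estimates \eqref{eq-errL2}--\eqref{eq-errmax} of the Corollary to Theorem \ref{thm-1},
\begin{equation*}
 \Big\|\Big(1-\tfrac{r}{\beta_\tau}\Big)(e_c)^{m}+\tfrac{r}{\beta_\tau}(e_c)^{m+1}\Big\|\le\max\big\{\|(e_c)^m\|,\|(e_c)^{m+1}\|\big\}\le C(\tau_c^2+h^4),
\end{equation*}
and likewise for $\|\cdot\|_\infty$. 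Adding the two contributions and absorbing $\tau_c^2\le\tau_c^2+h^4$ gives $\|(e_f)^k\|\le C(\tau_c^2+h^4)$, and the parallel computation gives $\|(e_f)^k\|_\infty\le C(\tau_c^2+h^4)$, which is in particular bounded by a step-size-independent constant $\widetilde C$; this proves \eqref{eq-cvg2}.

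The computation is short precisely because the interpolation acts only in the temporal variable, so the spatial discrete norms pass through it unchanged and the coarse-grid bounds of Theorem \ref{thm-1} transfer verbatim to the fine grid; there is no new energy argument to run. The one point that genuinely requires care is the assertion that $\widetilde C$ does not depend on $\tau_c$ or $h$: this rests on the uniform-in-$q$ estimates $\|(e_c)^q\|,\|(e_c)^q\|_\infty\le C(\tau_c^2+h^4)$, which combine Theorem \ref{thm-1} with the discrete embedding in Lemma \ref{lem-a}, and hence carry over the mesh restrictions $2C_1\tau_c<1/3$ and $\tau_c^2+h^4\le 1/C$ under which that theorem was proved. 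Alternatively, the bare boundedness $\|(e_f)^k\|_\infty\le\widetilde C$ follows directly from $\|(e_f)^k\|_\infty\le\|U^k\|_\infty+\|(u_f)^k\|_\infty\le C_0+\max_{q}\|(u_c)^q\|_\infty$ together with the max-norm boundedness of $(u_c)^q$ obtained in the same way, which is in fact all that is needed when $(u_f)^k,(w_f)^k$ later serve as coefficients in the linearized scheme of Step \Rmnum{3}.
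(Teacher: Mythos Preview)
Your proof is correct and follows essentially the same route as the paper: decompose $(e_f)^k$ into the Lagrange remainder of the linear-in-time interpolant of the exact solution plus a convex combination of the coarse-grid errors $(e_c)^m,(e_c)^{m+1}$, then invoke Theorem~\ref{thm-1} and its Corollary. The only cosmetic difference is that the paper reaches $\|(e_f)^k\|_\infty$ by first bounding $|(e_f)^k|_1$ and applying the embedding in Lemma~\ref{lem-a}, whereas you bound the max-norm directly via \eqref{eq-errmax}.
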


\begin{proof}
Assume that $u(x,t) \in C^{4,2}(\bar{\Omega}\times [0,T])$. For the fixed spatial grid node $x_{p}$, $1\leq p\leq M$, considering the truncation error of linear Lagrange interpolation in temporal direction, we easily yield
\begin{equation}\label{eq-tmintp}
    \begin{aligned}
    U_{p}^{(q-1)\beta_{\tau}+ r} 
    = &\left(1-\frac{r}{\beta_{\tau}}\right)U_{p}^{(q-1)\beta_{\tau}} + \frac{r}{\beta_{\tau}}U_{p}^{q\beta_{\tau}} + \frac{u_{tt}\left(x_{p},\xi\right)}{2}\left[(t_f)_{(q-1)\beta_{\tau}+ r}-(t_c)_{q-1}\right] \\
    &\cdot \left[(t_f)_{(q-1)\beta_{\tau}+ r}-(t_c)_{q}\right],
    \quad  \xi\in\left((t_c)_{(q-1)},(t_c)_{q}\right). 
    \end{aligned}
\end{equation}
Subtracting \eqref{eq-timeintepo} from \eqref{eq-tmintp}, then we have
\begin{equation*}
    \begin{aligned}
    (e_f)_{p}^{(q-1)\beta_{\tau}+ r} 
    =& \left(1-\frac{r}{\beta_{\tau}}\right)(e_c)_{p}^{q-1} + \frac{r}{\beta_{\tau}}(e_c)_{p}^{q} \\
    &+\frac{u_{tt}\left(x_{p},\xi\right)}{2}\left[(t_f)_{(q-1)\beta_{\tau}+ r}-(t_c)_{q-1}\right]\left[(t_f)_{(q-1)\beta_{\tau}+ r}-(t_c)_{q}\right].
    \end{aligned}
\end{equation*} 
Applying the triangle inequality, it holds that
    \begin{align*}
    &\|(e_f)^{(q-1)\beta_{\tau}+ r}\| \leq \left(1-\frac{r}{\beta_{\tau}}\right)\|(e_c)^{q-1}\| + \frac{r}{\beta_{\tau}}\|(e_c)^{q}\| + \frac{CL}{2}\tau_c^2,  \\
    &|(e_f)^{(q-1)\beta_{\tau}+ r}|_{1} 
    \leq \left(1-\frac{r}{\beta_{\tau}}\right)|(e_c)^{q-1}|_{1} + \frac{r}{\beta_{\tau}}|(e_c)^{q}|_{1} + \frac{CL}{2}\tau_c^2 .
    \end{align*}
After combining the above estimates  with \eqref{eq-errL2} and \eqref{eq-cvg1}, we obtain
\begin{align}
    &\|(e_f)^{k}\| \leq \frac{LC}{\sqrt{6}} (\tau_c^2 + h^4) +\frac{CL}{2}\tau_c^2 \leq C(\tau_c^2 + h^4), \ 0\leq k\leq N_f, \label{eq-tmintperrL2} \\
    &\|(e_f)^{k}\|_{\infty} \leq \frac{\sqrt{L}}{2}|(e_f)^{k}|_{1} \leq \frac{C \sqrt{L}}{2} (\tau_c^2 + h^4) +  \frac{C\sqrt{L^3}}{4}\tau_c^2  \leq \widetilde{C}, \quad 0\leq k\leq N_f. \label{eq-tmintperrmax} 
\end{align}
where $\widetilde{C}>0$ is independent of $\tau_c$ and $h$.
This completes the proof.
\end{proof}

Now, we take $\eta=f$ in \eqref{eq-pbm3b}, and subtract \eqref{eq-stwintepo} from \eqref{eq-pbm3b}, then we yield
\begin{equation}\label{eq-tworelerr}
    (\rho_f)_p^{k} = \delta_{xx} (e_f)_p^k - \frac{h^2}{12}\delta_{xx} (\rho_f)_p^k + (Q_{f})_p^k, \quad 1\leq p\leq M, \ 0\leq k \leq N_{f},
\end{equation}
and it can be rewritten as the following vector form,
\begin{equation*}
    (\rho_f)^{k} = \left(\mathcal{I}+\frac{h^2}{12}\delta_{xx}\right)^{-1}\delta_{xx}(e_f)^{k} + \left(I+\frac{h^2}{12}\delta_{xx}\right)^{-1}(Q_f)^k,
\end{equation*}
where $\mathcal{I}$ is the identity operator. Denote $A=\mathcal{I}+\frac{h^2}{12}\delta_{xx}$. It is easy to check that $A$ is a circulant matrix, then its eigenvalues $(\sigma_{A})_p = 1-\frac{1}{3}\sin^2\left(\frac{p\pi}{M}\right)$ can be calculated by discrete Fourier transform, see \cite{gray2006toeplitz}. Thus, the eigenvalues of $A^{-1}$ satisfy $1\leq (\sigma_{A^{-1}})_p \leq \frac{3}{2}$. Based on this conclusion and Lemma \ref{lem-a}, we have  
\begin{equation}\label{eq-rhoL2st2}
    \begin{aligned}
        \| (\rho_f)^{k}  \| 
        &\leq \| A^{-1}  \left(\delta_{xx}(e_f)^{k} + (Q_f)^k\right)\| \leq \| A^{-1}\|_{spec}\left(\|\delta_{xx}(e_f)^{k}\| + \|(Q_f)^k\| \right) \\
        &\leq \frac{3}{2} \left(\frac{4}{h^2}\|(e_f)^{k}\| + \|(Q_f)^k\|\right) = \frac{6}{h^2}\|(e_f)^{k}\| + \frac{3}{2}\|(Q_f)^k\|.
    \end{aligned}
\end{equation}
Finally, by using the definition of the operator $\delta_{xx}$, \eqref{eq-tworelerr} can be rewritten as 
\begin{equation*}
    \frac{5}{6}(\rho_f)^{k}_p = \frac{1}{h^2}\left( (e_f)^k_{p+1} -2(e_f)^{k}_p + (e_f)^k_{p-1} \right)  - \frac{1}{12}\left( (\rho_f)^{k}_{p-1}  + (\rho_f)^{k}_{p+1}\right) + (Q_f)_p^k.
\end{equation*}
Therefore, under the periodic boundary condition, we can derive
\begin{equation*}
    \frac{5}{6}|(\rho_f)^{k}_p| \leq \frac{4}{h^2} \|(e_f)^k\|_{\infty} + \frac{1}{6}\|(\rho_f)^k\|_{\infty} + |(Q_f)_p^k|, \quad 1\leq p\leq M.
\end{equation*} 
we know that $|(Q_f)_p^k| \leq \widehat{C}h^4$, $\|(e_f)^k\|_{\infty}\leq \widetilde{C}$ from \eqref{eq-ErrEst} and \eqref{eq-cvg2}, respectively.
Since the above inequality holds for all $p$, there exists a constant $\widetilde{C}>0$ such that
\begin{equation}\label{eq-rhomaxst2}
    \left\|(\rho_f)^k\right\|_{\infty}\leq \frac{6}{h^2}\left(\left\|(e_f)^k\right\|_{\infty} +  \frac{3}{2}\widehat{C}h^6 \right)
    \leq \frac{6}{h^2} \widetilde{C}.
\end{equation}

\subsection{Error estimate for Step \Rmnum{3}}
Finally, we analyze the convergence of the scheme \eqref{eq-pbm6a}-\eqref{eq-pbm6d}.
Denote 
\begin{equation*}
    e_p^k =  U_p^k - u_p^k, \quad \rho_p^k = W_p^k-w_p^k , \quad 1\leq p\leq M, \quad 0\leq k \leq N_f.
\end{equation*}

Take $\eta=f$ in scheme \eqref{eq-pbm3a}-\eqref{eq-pbm3b}. Subtracting the scheme \eqref{eq-pbm6a}-\eqref{eq-pbm6d} from \eqref{eq-pbm3a}-\eqref{eq-pbm3b}, we derive the following error system 
\begin{subequations}\label{eq-errpbm1}
\begin{numcases}{}
    \delta^{f}_t e_p^{k-\frac{1}{2}} - \mu\delta^{f}_t \rho_p^{k-\frac{1}{2}}  + \Psi\left(U_p^{k-\frac{1}{2}},U_p^{k-\frac{1}{2}}\right) - \Psi\left((u_f)_p^{k-\frac{1}{2}},u_p^{k-\frac{1}{2}}\right) \notag \\
    \quad - \frac{h^2}{2} \left[\Psi\left(W_p^{k-\frac{1}{2}},U_p^{k-\frac{1}{2}}\right) - \Psi\left((w_f)_p^{k-\frac{1}{2}},u_p^{k-\frac{1}{2}}\right)\right] + \Delta_x e_p^{k-\frac{1}{2}} - \frac{h^2}{6}\Delta_x \rho_p^{k-\frac{1}{2}}  \notag \\
    \quad - \lambda \rho_p^{k-\frac{1}{2}}  = (R_f)_p^{k-\frac{1}{2}}, \quad 1\leq p\leq M, \quad 0\leq k\leq N_{f},  \label{eq-errjbm2a}\\
    \rho_p^k = \delta_{xx} e_p^k - \frac{h^2}{12}\delta_{xx} \rho_p^k + (Q_f)_p^k,\quad 1\leq p\leq M, \quad 0\leq k\leq N_{f}, \label{eq-errjbm2b} \\
    e_p^k = e_{p+M}^k, \quad \rho_p^k = \rho_{p+M}^k, \quad 1\leq p\leq M, \quad 0\leq k\leq N_{f}. \label{eq-errjbm2c} 
\end{numcases}
\end{subequations}

\begin{thm}
    Suppose $\{u(x,t),w(x,t) \mid (x,t)\in \Omega\times [0,T]  \}$ is the exact solution of problem \eqref{eq-bbmbnew}-\eqref{eq-twordfm} and $\{u_p^k, w_p^k \mid 1\leq p\leq M, ~ 0\leq k \leq N_f \}$ is the corrected solution on the fine grid evaluated by the problem \eqref{eq-pbm6a}-\eqref{eq-pbm6d}.
    If $2\tau_f C_3 < 1/3$, then
    \begin{equation}\label{eq-cvg3}
        |e^k|_{1} \leq C \left(\tau_c^2 + \tau_f^2 + h^4\right), \quad  0\leq k \leq N_f.
    \end{equation}
\end{thm}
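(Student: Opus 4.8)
The strategy mirrors the proof of Theorem~\ref{thm-1} for Step~\Rmnum{1}, but with one crucial difference: the nonlinear terms now involve the \emph{rough} interpolated solution $(u_f)^{k-\frac12}$ and $(w_f)^{k-\frac12}$ in the first argument of $\Psi$, rather than the exact solution. This forces us to split each $\Psi$-difference using the identity $\Psi(W,U)-\Psi((w_f),u) = \Psi(W-w_f,U) + \Psi(w_f, U-u) = \Psi(\rho_f,U) + \Psi(w_f,e)$, and similarly for the other term. The plan is: (i) take the inner product of the error equation \eqref{eq-errjbm2a} with $\delta^f_t e^{k-\frac12}$; (ii) handle the $\Delta_x$ terms and the $\mu\langle\delta^f_t\rho,\delta^f_t e\rangle$ term exactly as in \eqref{eq-alphaes1}--\eqref{eq-mues1}, using Lemma~\ref{lem-d}; (iii) estimate the $\Psi$-differences, now with the above splitting; (iv) invoke Lemma~\ref{lem-f} to telescope the time-difference terms into an energy-type quantity $G^k = |e^k|_1^2 + \frac{h^2}{12}\|\rho^k\|^2 - \frac{h^4}{144}|\rho^k|_1^2$; (v) apply discrete Gr\"onwall.

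\textbf{The main obstacle.} The delicate point is controlling the extra terms $\Psi(\rho_f,U)$ and, for the $\frac{h^2}{2}\Psi$ part, $\frac{h^2}{2}\Psi(\rho_f^{(w)}, \cdot)$-type contributions, because $\|(\rho_f)^k\|_\infty$ and $\|(\rho_f)^k\|$ are only bounded by $C h^{-2}$ (see \eqref{eq-rhoL2st2} and \eqref{eq-rhomaxst2}), not by a constant. When such a factor is multiplied by $h^2$ (from the $\frac{h^2}{2}$ coefficient in front of the $\Psi$-term involving $w$), the $h^{-2}$ is absorbed and a bounded contribution remains --- this is exactly why the compact scheme is designed with that $h^2/2$ prefactor. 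For the terms without the $h^2$ prefactor, one instead uses that $\Psi(\rho_f,U)$ pairs against $\delta^f_t e^{k-\frac12}$ and, after Cauchy--Schwarz and Lemma~\ref{lem-e}, produces a factor $\|(e_f)^{k-\frac12}\|$ or $|(e_f)^{k-\frac12}|_1$ times $\|\delta^f_t e^{k-\frac12}\|$; here \eqref{eq-tmintperrL2} gives $\|(e_f)^{k}\| \le C(\tau_c^2+h^4)$, which is exactly the desired order (absorbed into the $(\tau_c^2+\tau_f^2+h^4)^2$ right-hand side via Young's inequality). So the bookkeeping has to be arranged so that every appearance of $\rho_f$ either carries a compensating $h^2$ or is actually an $e_f$ in disguise.

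\textbf{Assembling the estimate.} After these manipulations one arrives at an inequality of the form
\begin{equation*}
    \frac{1}{2\tau_f}\left(G^k - G^{k-1}\right) \le C_3\left(G^k + G^{k-1}\right) + C_4\left(\tau_c^2 + \tau_f^2 + h^4\right)^2, \quad 1 \le k \le N_f,
\end{equation*}
where the source term now picks up a $\tau_f^2$ from the truncation error $(R_f)^{k-\frac12}$ (which is $\mathcal{O}(\tau_f^2+h^4)$) together with the $\tau_c^2$ inherited through $(u_f)^{k-\frac12}$ via \eqref{eq-tmintperrL2}. One checks the initial term $G^0$ is $\mathcal{O}(h^8)$ exactly as in \eqref{eq-esG0a} (since $\rho^0=(\rho_f)^0$ satisfies the same relation). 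Then, under the stepsize restriction $2\tau_f C_3 < 1/3$, summing over $k$, moving the $G^k$ term to the left, and invoking the discrete Gr\"onwall inequality [Lemma, \citenum{Sloan1986time}] gives $G^k \le \exp(6C_3 T)\big[\tfrac32 G^0 + \tfrac{C_4}{2C_3}(\tau_c^2+\tau_f^2+h^4)^2\big]$, and finally $|e^k|_1^2 \le G^k \le C^2(\tau_c^2+\tau_f^2+h^4)^2$ by Lemma~\ref{lem-a}, which is \eqref{eq-cvg3}. As with Step~\Rmnum{1}, the $L^2$ and maximum-norm estimates follow immediately from Lemma~\ref{lem-a}.
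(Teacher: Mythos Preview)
Your plan is correct and matches the paper's proof essentially step for step: inner product with $\delta^f_t e^{k-\frac12}$, handling of the $\mu$ and $\Delta_x$ terms via Lemmas~\ref{lem-d} and~\ref{lem-f}, the $\Psi$-splittings (your $\Psi(\rho_f,U)+\Psi(w_f,e)$ is algebraically identical to the paper's $\Psi(W,e)+\Psi(\rho_f,U)-\Psi(\rho_f,e)$, since $w_f=W-\rho_f$), the use of \eqref{eq-tmintperrL2}, \eqref{eq-rhoL2st2}, \eqref{eq-rhomaxst2} to control the interpolation errors, and the Gr\"onwall closure. One small slip: in your ``main obstacle'' paragraph you write ``For the terms without the $h^2$ prefactor, one instead uses that $\Psi(\rho_f,U)$\ldots produces a factor $\|(e_f)^{k-\frac12}\|$'' --- you mean $\Psi(e_f,U)$ there, since the first $\Psi$-difference involves $e_f$, not $\rho_f$; with that correction the sentence is right and matches the paper.
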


\begin{proof}
    Due to the initial value is known, so \eqref{eq-cvg3} clearly holds when $k=0$. Similar to \eqref{eq-rhoes0}, we also get
    \begin{equation}\label{eq-rhoes00}
       \| \rho^0 \|^2 \leq \frac{9}{4}L\left(Ch^4\right)^2.
    \end{equation}
    Now, we prove that \eqref{eq-cvg3} holds for $k=n$. Taking an inner product of \eqref{eq-errjbm2a} with $\delta^f_t e^{k-\frac{1}{2}}$, we have
    \begin{equation}\label{eq-inproduct3}
        \begin{aligned}
            &\Vert \delta^f_t e^{k-\frac{1}{2}} \Vert^2 - \mu \left\langle \delta^f_t\rho^{k-\frac{1}{2}} , \delta^f_t e^{k-\frac{1}{2}} \right\rangle 
            +  \left\langle\Psi\left(U^{k-\frac{1}{2}},U^{k-\frac{1}{2}}\right) - \Psi\left((u_f)^{k-\frac{1}{2}},u^{k-\frac{1}{2}}\right), \delta^f_t e^{k-\frac{1}{2}} \right\rangle \\
            &- \frac{ h^2}{2} \left\langle \Psi\left(W^{k-\frac{1}{2}},U^{k-\frac{1}{2}}\right) - \Psi\left((w_f)^{k-\frac{1}{2}},u^{k-\frac{1}{2}}\right), \delta^f_t e^{k-\frac{1}{2}}\right\rangle - \lambda \left\langle \rho^{k-\frac{1}{2}},\delta^f_t e^{k-\frac{1}{2}} \right\rangle \\
            &+\left\langle \Delta_x e^{k-\frac{1}{2}}, \delta^f_t e^{k-\frac{1}{2}} \right\rangle -\frac{h^2}{6} \left\langle \Delta_x \rho^{k-\frac{1}{2}}, \delta^f_t e^{k-\frac{1}{2}}\right\rangle = \left\langle (R_f)^{k-\frac{1}{2}},  \delta^f_t e^{k-\frac{1}{2}}\right\rangle, \quad 1\leq k\leq n.
        \end{aligned}
    \end{equation}
    Since some inner product terms in \eqref{eq-inproduct3} are consistent with \eqref{eq-inproduct2}, some details will be omitted. 
    For the second term, we employ \eqref{eq-lemd2} in Lemma \ref{lem-d} to get
    \begin{equation*}
        \begin{aligned}
        \mu \left\langle \delta^f_t\rho^{k-\frac{1}{2}} , \delta^f_t e^{k-\frac{1}{2}} \right\rangle 
        \leq& - \mu|\delta^f_t e^{k-\frac{1}{2}} |_1^2 - \frac{\mu h^2}{18}\| \delta^f_t\rho^{k-\frac{1}{2}}  \|^2 \\
        &+ \frac{\mu h^2}{12}\left\langle \delta^f_t\rho^{k-\frac{1}{2}}, \delta_t^f(Q_f)^{k-\frac{1}{2}} \right\rangle + \mu\left\langle  \delta_t^f(Q_f)^{k-\frac{1}{2}}, \delta^f_t e^{k-\frac{1}{2}} \right\rangle.
        \end{aligned}
    \end{equation*}
    Using the Cauchy-Schwarz inequality, Young's inequality and Lemma \ref{lem-a}, we have
    \begin{equation*}
        \begin{aligned}
            &\frac{\mu h_f^2}{12}\left\langle \delta^f_t\rho^{k-\frac{1}{2}}, \delta_t^f(Q_f)^{k-\frac{1}{2}} \right\rangle \leq \frac{\mu h^2}{18} \| \delta^f_t \rho^{k-\frac{1}{2}} \|^2 + \frac{\mu h^2}{32} \| \delta_t^f (Q_f)^{k-\frac{1}{2}} \|^2, \\
            &\mu\left\langle  \delta_t^f(Q_f)^{k-\frac{1}{2}}, \delta^f_t e^{k-\frac{1}{2}} \right\rangle \leq \frac{\mu L}{\sqrt{6}}\|\delta_t^f(Q_f)^{k-\frac{1}{2}}\| \cdot |\delta^f_t e^{k-\frac{1}{2}}|_1 \leq \frac{\mu}{2} |\delta^f_t e^{k-\frac{1}{2}}|_1^2 + \frac{\mu L^2}{12} \| \delta_t^f(Q_f)^{k-\frac{1}{2}} \|^2.
        \end{aligned}
    \end{equation*}
    This suggests that
    \begin{equation}\label{eq-mues2}
        \mu \left\langle \delta^f_t\rho^{k-\frac{1}{2}} , \delta^f_t e^{k-\frac{1}{2}} \right\rangle \leq -\frac{\mu}{2} |\delta^f_t e^{k-\frac{1}{2}}|_1^2 + \mu\left(\frac{ h_f^2}{32} + \frac{L^2}{12} \right) \| \delta_t^f (Q_f)^{k-\frac{1}{2}} \|^2 .
    \end{equation}    
    Regarding the terms containing operator $\Delta^f_x$, we directly use \eqref{eq-alphaes1}, that is,
    \begin{equation}\label{eq-Deltaes1}
        \begin{aligned}
            -\left\langle \Delta_x e^{k-\frac{1}{2}}, \delta^f_t e^{k-\frac{1}{2}} \right\rangle 
            &\leq \frac{1}{8}\Vert \delta^f_t e^{k-\frac{1}{2}} \Vert^2 + 2|e^{k-\frac{1}{2}}|_{1}^2, \\
            \frac{h^2}{6}\left\langle \Delta_x \rho^{k-\frac{1}{2}}, \delta^f_t e^{k-\frac{1}{2}} \right\rangle &\leq \frac{1}{8}\Vert \delta^f_t e^{k-\frac{1}{2}} \Vert^2 + \frac{h^4}{18}|\rho^{k-\frac{1}{2}}|_{1}^2.
        \end{aligned}
    \end{equation}
    Afterwards, we focus on the estimates of the inner product term containing the operator $\Psi$. 
    Recall the notation $(e_f)_p^k =  U_p^k - (u_f)_p^k$. 
    By combining the definition of $\Psi$ with Lemmas \ref{lem-b} and \ref{lem-e}, we get 
    \begin{equation*}
        \begin{aligned}
            &-\left\langle\Psi\left(U^{k-\frac{1}{2}},U^{k-\frac{1}{2}}\right) - \Psi\left((u_f)^{k-\frac{1}{2}},u^{k-\frac{1}{2}}\right), \delta^f_t e^{k-\frac{1}{2}} \right\rangle \\ 
            = &- \left\langle\Psi\left(U^{k-\frac{1}{2}},e^{k-\frac{1}{2}}\right) + \Psi\left((e_f)^{k-\frac{1}{2}},U^{k-\frac{1}{2}}\right) - \Psi\left((e_f)^{k-\frac{1}{2}},e^{k-\frac{1}{2}}\right), \delta^f_t e^{k-\frac{1}{2}} \right\rangle \\
            = &-\frac{1}{3}\left\langle 2U^{k-\frac{1}{2}} \Delta_x e^{k-\frac{1}{2}} +   \frac{1}{2}D_{+} e^{k-\frac{1}{2}} \; \delta_x^{+\frac{1}{2}} U^{k-\frac{1}{2}} + \frac{1}{2}D_{-} e^{k-\frac{1}{2}} \; \delta_x^{-\frac{1}{2}} U^{k-\frac{1}{2}} , \delta_t^f e^{k-\frac{1}{2}} \right\rangle \\
            &- \frac{1}{3}\left\langle (e_f)^{k-\frac{1}{2}} \Delta_x U^{k-\frac{1}{2}} , \delta^f_t e^{k-\frac{1}{2}} \right\rangle + \frac{1}{3}\left\langle (e_f)^{k-\frac{1}{2}} U^{k-1}  , \Delta_x\left(\delta^f_t e^{k-\frac{1}{2}}\right) \right\rangle\\ 
            &+ \frac{1}{3}\left\langle (e_f)^{k-\frac{1}{2}} \Delta_x e^{k-\frac{1}{2}} , \delta^f_t e^{k-\frac{1}{2}} \right\rangle - \frac{1}{3}\left\langle  (e_f)^{k-\frac{1}{2}} e^{k-\frac{1}{2}}  , \Delta_x\left(\delta^f_t e^{k-\frac{1}{2}}\right) \right\rangle.
        \end{aligned}
    \end{equation*}
    Furthermore, by flexibly applying the Cauchy-Schwarz inequality, Young's inequality, \eqref{eq-notan2}, \eqref{eq-cvg2} and Lemma \ref{lem-a}, we derive
    \begin{equation}\label{eq-Psies3}
        \begin{split}
            &-\left\langle\Psi\left(U^{k-\frac{1}{2}},U^{k-\frac{1}{2}}\right) - \Psi\left((u_f)^{k-\frac{1}{2}},u^{k-\frac{1}{2}}\right), \delta^f_t e^{k-\frac{1}{2}} \right\rangle  \\
            \leq& \left(\frac{2C_0}{3}|e^{k-\frac{1}{2}} |_1 + \frac{C_0}{3}\| e^{k-\frac{1}{2}}\|\right)\|\delta^f_t e^{k-\frac{1}{2}}\| 
            + \frac{C_0}{3}\| (e_f)^{k-\frac{1}{2}}\| \left(\|\delta^f_t e^{k-\frac{1}{2}}\| + |\delta^f_t e^{k-\frac{1}{2}}|_1 \right)  \\
            &+ \frac{1}{3}\| (e_f)^{k-\frac{1}{2}}\|_{\infty} \left( |e^{k-\frac{1}{2}}|_{1} \cdot \| \delta^f_t e^{k-\frac{1}{2}}\| + \|e^{k-\frac{1}{2}}\| \cdot | \delta^f_t e^{k-\frac{1}{2}}|_{1} \right)  \\
            \leq& \left(\frac{2C_0}{3} + \frac{C_0 L}{3\sqrt{6}}\right)|e^{k-\frac{1}{2}} |_1 \cdot \|\delta^f_t e^{k-\frac{1}{2}}\| + \frac{C_0}{3}\| (e_f)^{k-\frac{1}{2}}\| \left(\|\delta^f_t e^{k-\frac{1}{2}}\| + |\delta^f_t e^{k-\frac{1}{2}}|_1 \right)  \\
            &+ \frac{\widetilde{C}}{3}  |e^{k-\frac{1}{2}}|_{1} \cdot \| \delta^f_t e^{k-\frac{1}{2}}\| + + \frac{\widetilde{C}L}{3\sqrt{6}}|e^{k-\frac{1}{2}}|_{1} \cdot | \delta^f_t e^{k-\frac{1}{2}}|_{1} \\
            \leq& \frac{1}{4}\|\delta^f_t e^{k-\frac{1}{2}}\|^2 + \left(\frac{16C_0^2}{9} + \frac{2C_0^2L^2}{27} + \frac{4\widetilde{C}^2}{9} + \frac{\widetilde{C}^2 L^2}{27\mu}\right)|e^{k-\frac{1}{2}}|_{1}^2 
            + \frac{\mu}{4}|\delta^f_t e^{k-\frac{1}{2}}|_{1}^2 + \left(\frac{4C_0^2}{9} + \frac{2C_0^2}{9\mu} \right)\| (e_f)^{k-\frac{1}{2}}\|^2. 
        \end{split}    
    \end{equation}
    Similarly, we have
    \begin{equation*}
        \begin{aligned}
            &\frac{ h^2}{2}\left\langle\Psi\left(W^{k-\frac{1}{2}},U^{k-\frac{1}{2}}\right) - \Psi\left((w_f)^{k-\frac{1}{2}},u^{k-\frac{1}{2}}\right), \delta^f_t e^{k-\frac{1}{2}} \right\rangle \\ 
            = &\frac{ h^2}{2} \left\langle\Psi\left(W^{k-\frac{1}{2}},e^{k-\frac{1}{2}}\right) + \Psi\left((\rho_f)^{k-\frac{1}{2}},U^{k-\frac{1}{2}}\right) - \Psi\left((\rho_f)^{k-\frac{1}{2}},e^{k-\frac{1}{2}}\right), \delta^f_t e^{k-\frac{1}{2}} \right\rangle \\
            = &\frac{h^2}{6}\left\langle 2W^{k-\frac{1}{2}} \Delta_x e^{k-\frac{1}{2}} +   \frac{1}{2}D_{+} e^{k-\frac{1}{2}} \; \delta_x^{+\frac{1}{2}} W^{k-\frac{1}{2}} + \frac{1}{2}D_{-} e^{k-\frac{1}{2}} \; \delta_x^{-\frac{1}{2}} W^{k-\frac{1}{2}} , \delta^f_t e^{k-\frac{1}{2}} \right\rangle \\
            &+\frac{h^2}{6} \left\langle (\rho_f)^{k-\frac{1}{2}} \Delta_x U^{k-\frac{1}{2}} , \delta^f_t e^{k-\frac{1}{2}} \right\rangle - \frac{h^2}{6}\left\langle (\rho_f)^{k-\frac{1}{2}} U^{k-\frac{1}{2}}  , \Delta_x\left(\delta^f_t e^{k-\frac{1}{2}}\right) \right\rangle\\ 
            &- \frac{h^2}{6}\left\langle (\rho_f)^{k-\frac{1}{2}} \Delta_x e^{k-\frac{1}{2}} , \delta^f_t e^{k-\frac{1}{2}} \right\rangle + \frac{h^2}{6}\left\langle  (\rho_f)^{k-\frac{1}{2}} e^{k-\frac{1}{2}}  , \Delta_x\left(\delta^f_t e^{k-\frac{1}{2}}\right) \right\rangle,
        \end{aligned}
    \end{equation*}
    and using Cauchy-Schwarz inequality, Young's inequality, \eqref{eq-notan2}, \eqref{eq-rhomaxst2}, Lemma \ref{lem-a} again, it follows that
    \begin{equation}\label{eq-Psies4}
        \begin{split}
            &\frac{ h^2}{2}\left\langle\Psi\left(W^{k-\frac{1}{2}},U^{k-\frac{1}{2}}\right) - \Psi\left((w_f)^{k-\frac{1}{2}},u^{k-\frac{1}{2}}\right), \delta^f_t e^{k-\frac{1}{2}} \right\rangle \\
            \leq& \left(\frac{C_0 h^2}{3}|e^{k-\frac{1}{2}} |_1 + \frac{C_0 h^2}{6}\| e^{k-\frac{1}{2}}\|\right)\|\delta^f_t e^{k-\frac{1}{2}}\|  + \frac{C_0 h^2}{6}\| (\rho_f)^{k-\frac{1}{2}}\| \left( \|\delta^f_t e^{k-\frac{1}{2}}\| + |\delta^f_t e^{k-\frac{1}{2}}|_1 \right) \\
            &+ \frac{h^2}{6}\| (\rho_f)^{k-\frac{1}{2}}\|_{\infty} \left( |e^{k-\frac{1}{2}}|_{1} \cdot \| \delta^f_t e^{k-\frac{1}{2}}\| +  \|e^{k-\frac{1}{2}}\| \cdot | \delta^f_t e^{k-\frac{1}{2}}|_{1}\right) \\
            \leq& \left(\frac{C_0 h^2}{3} + \frac{C_0 L h^2}{6\sqrt{6}}\right)|e^{k-\frac{1}{2}} |_1 \cdot \|\delta^f_t e^{k-\frac{1}{2}}\| + \frac{C_0 h^2}{6}\| (\rho_f)^{k-\frac{1}{2}}\| \left( \|\delta^f_t e^{k-\frac{1}{2}}\| + |\delta^f_t e^{k-\frac{1}{2}}|_1 \right) \\
            &+ \widetilde{C}  |e^{k-\frac{1}{2}}|_{1} \cdot \| \delta^f_t e^{k-\frac{1}{2}}\| + \frac{\widetilde{C}L }{\sqrt{6}}|e^{k-\frac{1}{2}}|_{1} \cdot | \delta^f_t e^{k-\frac{1}{2}}|_{1} \\
            \leq& \frac{1}{4}\|\delta^f_t e^{k-\frac{1}{2}}\|^2 + \left(\frac{4C_0^2}{9} + \frac{C_0^2L^2}{54} + 4\widetilde{C}^2 + \frac{\widetilde{C}^2 L^2}{3\mu}  \right)|e^{k-\frac{1}{2}}|_{1}^2  
            +\frac{\mu}{4}|\delta^f_t e^{k-\frac{1}{2}}|_{1}^2 + \left( \frac{C_0^2 }{18\mu} + \frac{C_0^2 }{9} \right)h^4 \| (\rho_f)^{k-\frac{1}{2}}\|^2.
        \end{split}    
    \end{equation}

    Finally, by applying Lemmas \ref{lem-a}, \ref{lem-f}, and substituting \eqref{eq-mues2}-\eqref{eq-Psies3} into \eqref{eq-inproduct3}, it holds that
    \begin{equation}\label{eq-lastes1}
        \begin{aligned}
            &\Vert \delta^f_t e^{k-\frac{1}{2}} \Vert^2 + \frac{\lambda}{2\tau_f}\left[ \left(|e^{k}|_1^2 - |e^{k-1}|_1^2\right) + \frac{h^2}{12}\left(\|\rho^{k}\|^2 - \|\rho^{k-1}\|^2\right) - \frac{h^4}{144}\left( |\rho^{k}|_1^2 - |\rho^{k-1}|_1^2 \right)\right] \\
            \leq &\frac{3}{4}\Vert \delta^f_t e^{k-\frac{1}{2}} \Vert^2 + \left( \frac{20C_0^2}{9} + \frac{5C_0^2L^2}{54} + \frac{40\widetilde{C}^2}{9} + \frac{10\widetilde{C}^2 L^2}{27\mu} + 2\right) |e^{k-\frac{1}{2}}|_1^2 + \frac{2h^2}{9} \|\rho^{k-\frac{1}{2}}\|   \\
            &+ \left(\frac{4C_0^2}{9} + \frac{2C_0^2}{9\mu} \right)\| (e_f)^{k-\frac{1}{2}}\|^2 
            +\left( \frac{C_0^2}{18\mu} + \frac{C_0^2}{9} \right)h^4 \| (\rho_f)^{k-\frac{1}{2}}\|^2 + \mu\left(\frac{ h^2}{32} + \frac{L^2}{12} \right) \| \delta_t^f (Q_f)^{k-\frac{1}{2}} \|^2  \\
            &+ \frac{\lambda h^2}{12}\left\langle \rho^{k-\frac{1}{2}},\delta_t^f (Q_f)^{k-\frac{1}{2}} \right\rangle + \lambda\left\langle (Q_f)^{k-\frac{1}{2}},\delta_t^f e^{k-\frac{1}{2}} \right\rangle + \left\langle (R_f)^{k-\frac{1}{2}},  \delta^f_t e^{k-\frac{1}{2}}\right\rangle .
        \end{aligned}
    \end{equation}
    we now apply Cauchy-Schwarz inequality and Young's inequality to the remaining inner product terms in order to eliminate $\Vert \delta^f_t e^{k-\frac{1}{2}} \Vert^2$. Then, substituting \eqref{eq-rhoL2st2}, we yield
    \begin{equation*}
        \begin{split}
            &\frac{\lambda}{2\tau_f}\left[ \left(|e^{k}|_1^2 - |e^{k-1}|_1^2\right) + \frac{h^2}{12}\left(\|\rho^{k}\|^2 - \|\rho^{k-1}\|^2\right) - \frac{h^4}{144}\left( |\rho^{k}|_1^2 - |\rho^{k-1}|_1^2 \right)\right] \\
            \leq &\left( \frac{20C_0^2}{9} + \frac{5C_0^2L^2}{54} + \frac{40\widetilde{C}^2}{9} + \frac{10\widetilde{C}^2 L^2}{27\mu} + 2\right) |e^{k-\frac{1}{2}}|_1^2 + \frac{3h^2}{9}\|\rho^{k-\frac{1}{2}}\| + \left(\frac{76C_0^2}{9} + \frac{38C_0^2}{9\mu} \right)\| (e_f)^{k-\frac{1}{2}}\|^2 \\
            &+ \mu\left(\frac{\lambda^2 h^2}{64} + \frac{ h^2}{32} + \frac{L^2}{12} \right) \| \delta_t^f (Q_f)^{k-\frac{1}{2}} \|^2 + \left(\frac{C_0^2 h^4}{4\mu} + \frac{C_0^2 h^4}{2} + 2 \lambda^2\right)\|  (Q_f)^{k-\frac{1}{2}} \|^2 + 2\|  (R_f)^{k-\frac{1}{2}} \|^2 .
        \end{split}
    \end{equation*}
    Set
    $
        G^k = |e^k|_1^2 + \frac{h^2}{12}\| \rho^k\|^2 - \frac{h^4}{144}|\rho^k|_1^2.
    $
    Then, by taking $\eta=f$ in \eqref{eq-ErrEst}, and using \eqref{eq-cvg2}, we derive
    \begin{equation}\label{eq-simproductes2}
        \frac{1}{2\tau_f}\left(G^k - G^{k-1}\right) \leq C_3\left(G^k + G^{k-1}\right) + C_4\left( \tau_c^2 + h^4 \right)^2 + C_5\left( \tau_f^2 + h^4 \right)^2,\quad  1\leq k\leq n.
    \end{equation}
    where the constants are denoted as 
    \begin{equation*}
        \begin{aligned}
            &C_3 = \max\left\{ \frac{10C_0^2}{9\lambda} + \frac{5C_0^2L^2}{108\lambda} + \frac{20\widetilde{C}^2}{9\lambda} + \frac{5\widetilde{C}^2 L^2}{27\mu\lambda} + \frac{1}{\lambda} ~,~ \frac{3}{\lambda} \right\}, \quad
            C_4 = \left(\frac{76 C_0^2}{9\lambda} + \frac{38C_0^2}{9\mu\lambda}\right)C^2, \\
            &C_5 = \left(\frac{\mu\lambda}{64} +\frac{\mu}{32\lambda} + \frac{\mu L^2}{12\lambda}  + \frac{C_0^2 }{4\mu\lambda} + \frac{C_0^2}{2\lambda} + 2\lambda + \frac{2}{\lambda}  \right)C^2L.
        \end{aligned}
    \end{equation*}
    For the inequality \eqref{eq-simproductes2}, summing up over the index $k$ from $1$ to $n$, we get
    \begin{equation*}
        G^n - G^{0} \leq 2\tau_f C_3 G^n + 4\tau_f C_3 \sum_{k=0}^{n-1}G^k + 2\tau_fC_4\left( \tau_c^2 + h^4 \right)^2 + 2\tau_fC_5\left( \tau_f^2 + h^4 \right)^2.
    \end{equation*} 
    Obviously, $0\leq G^{0} \leq  \frac{9}{4}L(Ch^4)^2$ can be derived from \eqref{eq-rhoes00} and Lemma \ref{lem-a}. Thus, when $2\tau_f C_3 < 1/3$, we have
    \begin{equation*}
         G^n  \leq 6\tau_f C_3 \sum_{k=0}^{n-1}G^k + \frac{3}{2}G^0 + \frac{C_4}{2C_3}\left( \tau_c^2 + h^4 \right)^2 + \frac{C_5}{2C_3}\left( \tau_f^2 + h^4 \right)^2.
    \end{equation*}
    Using the Grownwall inequality and Lemma \ref{lem-a}, it holds that
    \begin{equation*}
        |e^n|_1^2 \leq G^n \leq \exp\left( 6 C_3 T \right)\left[\frac{3}{2}G^0 + \frac{C_4}{2C_3}\left( \tau_c^2 + h^4 \right)^2 + \frac{C_5}{2C_3}\left( \tau_f^2 + h^4 \right)^2 \right] \leq C^2 \left(\tau_c^2 + \tau_f^2 + h^4 \right)^2, 
    \end{equation*}
    which implies the proof of \eqref{eq-cvg3} has been completed.
\end{proof}

\begin{rmk}
Some scholars have employed Newton linearization (i.e., Taylor expansion) to design Step \Rmnum{3}, which inevitably introduces truncation errors that must be accounted for, as discussed in \cite{qiu2020time}.
As a result, the right-hand side of the corresponding inequality would include terms like $\|(e_f)^{k-\frac{1}{2}}\|^4$, ultimately leading to an error estimate on the order of $\mathcal{O}(\tau_f^2 + \tau_c^4 + h^4)$.

In contrast, the linearized scheme \eqref{eq-pbm6a}-\eqref{eq-pbm6d} introduced in Step \Rmnum{3} of our work does not rely on Taylor expansion, thereby avoiding the truncation errors associated with such linearization. Given that $\tau_c = \beta_{\tau} \tau_f$ and $\beta_{\tau} \geq 1$, our error estimate $\mathcal{O}(\tau_f^2 + \tau_c^2 + h^4)$ presented in \eqref{eq-cvg3} remains both valid and computationally feasible.

\end{rmk}

\begin{corly}
    The $L^2$ norm and maximum norm error estimates on the fine grid as follows:
        \begin{align}
            &\Vert e^k \Vert \leq \frac{L}{\sqrt{6}}|e^k|_{1} \leq \frac{L C}{\sqrt{6}} \left(\tau_c^2 + \tau_f^2 + h^4\right), \quad 0\leq k \leq N_f, \label{eq-errL2b} \\
            &\Vert e^k \Vert_{\infty} \leq \frac{\sqrt{L}}{2}|e^k|_{1} \leq \frac{C\sqrt{L}}{2} \left(\tau_c^2 + \tau_f^2 + h^4\right),\quad 0\leq k \leq N_f. \label{eq-errmaxb}  
        \end{align}
\end{corly}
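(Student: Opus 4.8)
The plan is to derive both estimates directly from the discrete embedding inequalities of Lemma~\ref{lem-a} combined with the $|\cdot|_1$-seminorm bound \eqref{eq-cvg3} established for the corrected fine-grid error. Since all the genuinely analytic work — the energy identity of Lemma~\ref{lem-f}, the treatment of the $\Psi(\cdot,\cdot)$ and $\Delta_x$ terms, the Cauchy--Schwarz/Young manipulations, and the discrete Grönwall step — has already been completed in proving \eqref{eq-cvg3}, I expect no real obstacle here: the argument is essentially a two-line consequence, and the ``hard part'' has been absorbed upstream.

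First I would note that $e^k = U^k - u^k$ lies in $\mathcal{W}_h$ for each $k$, because both $U^k$ and the numerical solution $u^k$ are $M$-periodic in the spatial index by \eqref{eq-pbm6c}. Hence Lemma~\ref{lem-a} applies with $w = e^k$, yielding $\|e^k\| \le \tfrac{L}{\sqrt 6}\,|e^k|_1$ from the third inequality of that lemma and $\|e^k\|_\infty \le \tfrac{\sqrt L}{2}\,|e^k|_1$ from the first; these are precisely the left-hand inequalities in \eqref{eq-errL2b} and \eqref{eq-errmaxb}.

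Next I would substitute the convergence bound $|e^k|_1 \le C(\tau_c^2 + \tau_f^2 + h^4)$ from \eqref{eq-cvg3} — valid for $0 \le k \le N_f$ under the step-size restriction $2\tau_f C_3 < 1/3$ assumed in the corresponding theorem — into the two displays above, and absorb the fixed geometric prefactors $\tfrac{L}{\sqrt 6}$ and $\tfrac{\sqrt L}{2}$ into the generic constant $C$. This produces $\|e^k\| \le \tfrac{LC}{\sqrt 6}(\tau_c^2 + \tau_f^2 + h^4)$ and $\|e^k\|_\infty \le \tfrac{C\sqrt L}{2}(\tau_c^2 + \tau_f^2 + h^4)$ for all $0 \le k \le N_f$, which is the claim.

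The only subtlety worth flagging is the bookkeeping of constants: following the convention fixed after \eqref{eq-ErrEst}, $C$ is understood to change from occurrence to occurrence while remaining independent of $\tau_c$, $\tau_f$, and $h$, so the chain of inequalities in the statement should be read with that in mind. Beyond this, no new estimate is required, and the proof is complete.
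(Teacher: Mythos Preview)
Your proposal is correct and matches the paper's approach exactly: the corollary is stated without proof in the paper, being an immediate consequence of the embedding inequalities in Lemma~\ref{lem-a} applied to the $|\cdot|_1$ bound \eqref{eq-cvg3}, just as you describe. The only comment is that your careful justification that $e^k\in\mathcal{W}_h$ and your remark on the generic-constant convention are more explicit than anything the paper bothers to say, but this is harmless.
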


\subsection{Stability}

In what follows, we analyze the stability of Step \Rmnum{1} and Step \Rmnum{3} in TTCD scheme. 
In Step \Rmnum{1}, by \eqref{eq-cvg1} and Lemma \ref{lem-a}, the boundedness of numerical solution $(u_c)_p^q$ can be derived as 
\begin{equation} 
    \Vert (u_c)^q \Vert_{\infty} \leq \frac{\sqrt{L}}{2} |(u_c)^q|_1 \leq \frac{\sqrt{L}}{2}\left(|U^q|_1 + |(e_c)^q|_1\right) \leq \frac{\sqrt{L}}{2}\left(C_0 \sqrt{L} + C(\tau_c^2 + h^4)\right) \leq C.
\end{equation}
Suppose $\left\{ (\tilde{u}_c)^q_p , (\tilde{w}_c)^q_p \right\}$ is the solution of 
\begin{subequations}\label{eq-pbm7}
\begin{numcases}{} 
    \delta^c_t (\tilde{u}_c)_p^{q-\frac{1}{2}} - \mu\delta^c_t (\tilde{w}_c)_p^{q-\frac{1}{2}} +\Psi\left((\tilde{u}_c)_p^{q-\frac{1}{2}},(\tilde{u}_c)_p^{q-\frac{1}{2}}\right) - \frac{h^2}{2}\Psi\left((\tilde{w}_c)_p^{q-\frac{1}{2}},(\tilde{u}_c)_p^{q-\frac{1}{2}}\right)   \notag \\
    \quad+  \Delta_x (\tilde{u}_c)_p^{q-\frac{1}{2}} - \frac{h^2}{6}\Delta_x (\tilde{w}_c)_p^{q-\frac{1}{2}}  - \lambda (\tilde{w}_c)_p^{q-\frac{1}{2}} = r_p^{q-\frac{1}{2}}, \quad 1\leq p\leq M, \ 1\leq q \leq N_c, \label{eq-pbm7a}\\
    (\tilde{w}_c)_p^{q} = \delta_{xx} (\tilde{u}_c)_p^q - \frac{h^2}{12}\delta_{xx} (\tilde{w}_c)_p^q , \quad 1\leq p\leq M, \ 0\leq q \leq N_c, \label{eq-pbm7b} \\
    (\tilde{u}_c)_p^q = (\tilde{u}_c)_{p+M}^q, \quad (\tilde{w}_c)_p^q = (\tilde{w}_c)_{p+M}^q, \quad 1\leq p\leq M, \ 0\leq q \leq N_c, \label{eq-pbm7c} \\
    (\tilde{u}_c)_p^0=\phi\left(x_p\right) +  \zeta\left(x_p\right),\quad 1\leq p\leq M, \label{eq-pbm7d}
\end{numcases}
\end{subequations}
where $r_p^{q-\frac{1}{2}}$ and $\zeta\left(x_p\right)$ denote the perturbation about source term and initial value, respectively.
Denote 
\begin{equation*}
    (\tilde{\varepsilon}_c)_p^{q-\frac{1}{2}} = (\tilde{u}_c)_p^{q-\frac{1}{2}}-(u_c)_p^{q-\frac{1}{2}},\quad  
    (\tilde{\sigma}_c)_p^{q-\frac{1}{2}} = (\tilde{w}_c)_p^{q-\frac{1}{2}}-(w_c)_p^{q-\frac{1}{2}}.
\end{equation*}
Subtracting the system \eqref{eq-pbm5a}-\eqref{eq-pbm5d} from \eqref{eq-pbm7a}-\eqref{eq-pbm7d}, the resulting perturbation system is as follows:
\begin{subequations}\label{eq-ptbpbm1}
\begin{numcases}{} 
    \delta^c_t (\tilde{\varepsilon}_c)_p^{q-\frac{1}{2}} - \mu\delta^c_t (\tilde{\sigma}_c)_p^{q-\frac{1}{2}} +\Psi\left((\tilde{u}_c)_p^{q-\frac{1}{2}},(\tilde{u}_c)_p^{q-\frac{1}{2}}\right) - \Psi\left((u_c)_p^{q-\frac{1}{2}},(u_c)_p^{q-\frac{1}{2}}\right) \notag \\
    \quad - \frac{ h^2}{2}\left[\Psi\left((\tilde{w}_c)_p^{q-\frac{1}{2}},(\tilde{u}_c)_p^{q-\frac{1}{2}}\right)  - \Psi\left((w_c)_p^{q-\frac{1}{2}},(u_c)_p^{q-\frac{1}{2}}\right) \right]  
    +  \Delta_x (\tilde{\varepsilon}_c)_p^{q-\frac{1}{2}} - \frac{h^2}{6}\Delta_x (\tilde{\sigma}_c)_p^{q-\frac{1}{2}} \notag \\ 
    \quad - \lambda (\tilde{\sigma}_c)_p^{q-\frac{1}{2}} = r_p^{q-\frac{1}{2}}, \ 1\leq p\leq M, \ 1\leq q \leq N_c, \label{eq-ptbpbm1a}\\
    (\tilde{\sigma}_c)_p^{q} = \delta_{xx} (\tilde{\varepsilon}_c)_p^q - \frac{h^2}{12}\delta_{xx} (\tilde{\sigma}_c)_p^q , \quad 1\leq p\leq M, \ 0\leq q \leq N_c, \label{eq-ptbpbm1b} \\
    (\tilde{\varepsilon}_c)_p^q = (\tilde{\varepsilon}_c)_{p+M}^q, \quad (\tilde{\sigma}_c)_p^q = (\tilde{\sigma}_c)_{p+M}^q, \quad 1\leq p\leq M, \ 0\leq q \leq N_c, \label{eq-ptbpbm1c} \\
    (\tilde{\varepsilon}_c)_p^0=\zeta\left(x_p\right),\quad 1\leq p\leq M. \label{eq-ptbpbm1d}
\end{numcases}
\end{subequations}

In Step \Rmnum{3}, we can also obtain $\Vert u^k \Vert_{\infty} \leq \widehat{C}$. 
Similar to \eqref{eq-pbm7a}-\eqref{eq-pbm7d}, a difference scheme can be also established,  which concerns initial value perturbation $\zeta\left(x_p\right)$ and source perturbation $\hat{r}_p^{k-\frac{1}{2}}$ for $1\leq p\leq M$, $0\leq k \leq N_f$. If the solution to this scheme is set as $\left\{ \tilde{u}_p^k , \tilde{w}_p^k \right\}$, and denote $\tilde{\varepsilon}_p^{k-\frac{1}{2}} = \tilde{u}_p^k - u_p^k$, $\tilde{\sigma}_p^{k-\frac{1}{2}} = \tilde{w}_p^k - w_p^k$, then a perturbation system analogous to \eqref{eq-ptbpbm1a}-\eqref{eq-ptbpbm1d} is obtained, where $\left\{ u_p^k , w_p^k \right\}$ is the solution of problem \eqref{eq-pbm6a}-\eqref{eq-pbm6d}.

By taking an inner product of \eqref{eq-ptbpbm1a} with $(\tilde{\varepsilon}_c)^{q-\frac{1}{2}}$, and using the similar proof of Theorem \ref{thm-1}, we can derive the stability of Step \Rmnum{1}, and the stability of Step \Rmnum{3} can also be easily yielded.

\begin{thm}\label{thm-3}
Suppose $\{(\tilde{\varepsilon}_c)_p^{q},(\tilde{\sigma}_c)_p^{q}\}$ is the solution of perturbation problem \eqref{eq-ptbpbm1a}-\eqref{eq-ptbpbm1d}, and  $\{\tilde{\varepsilon}_p^{k},\tilde{\sigma}_p^{k}\}$ is the solution of perturbation problem corresponding to Step \Rmnum{3} .Then we have 
\begin{equation}\label{eq-stb1}
    \Vert (\tilde{\varepsilon}_c)^{q} \Vert^2 \leq C \left( \sum_{n=1}^{q}\Vert r^{n-\frac{1}{2}}\Vert^2 + \Vert \zeta \Vert^2  \right), \quad 0\leq q \leq N_c,
\end{equation}
and 
\begin{equation}
      \Vert \tilde{\varepsilon}^{k} \Vert^2 \leq C \left( \sum_{n=1}^{k}\Vert \hat{r}^{n-\frac{1}{2}}\Vert^2 + \Vert \zeta \Vert^2  \right), \quad 0\leq k \leq N_f.
\end{equation}
\end{thm}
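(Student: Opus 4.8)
The plan is to prove \eqref{eq-stb1} by the discrete energy method, following the proofs of Theorems \ref{thm-1} and \ref{thm-2}; the estimate for Step \Rmnum{3} will then fall out of a strictly simpler version of the same argument, so I would carry out only Step \Rmnum{1} in detail. The starting point is to take the inner product of the perturbation equation \eqref{eq-ptbpbm1a} with $(\tilde{\varepsilon}_c)^{q-\frac12}$. The linear terms are handled as in Theorem \ref{thm-2}: $\langle\delta^c_t(\tilde{\varepsilon}_c)^{q-\frac12},(\tilde{\varepsilon}_c)^{q-\frac12}\rangle=\frac{1}{2\tau_c}(\Vert(\tilde{\varepsilon}_c)^q\Vert^2-\Vert(\tilde{\varepsilon}_c)^{q-1}\Vert^2)$; the term $-\mu\langle\delta^c_t(\tilde{\sigma}_c)^{q-\frac12},(\tilde{\varepsilon}_c)^{q-\frac12}\rangle$ is rewritten, via \eqref{eq-dlttwo} of Lemma \ref{lem-f} with $S=0$ (legitimate thanks to \eqref{eq-ptbpbm1b}), as $\frac{\mu}{2\tau_c}$ times the telescoping difference of $|(\tilde{\varepsilon}_c)^n|_1^2+\frac{h^2}{12}\Vert(\tilde{\sigma}_c)^n\Vert^2-\frac{h^4}{144}|(\tilde{\sigma}_c)^n|_1^2$ between $n=q$ and $n=q-1$; the term $-\lambda\langle(\tilde{\sigma}_c)^{q-\frac12},(\tilde{\varepsilon}_c)^{q-\frac12}\rangle$ becomes, by \eqref{eq-lemd2} of Lemma \ref{lem-d} with $S=0$, a nonnegative dissipative contribution; and both $\Delta_x$ terms vanish, namely $\langle\Delta_x(\tilde{\varepsilon}_c)^{q-\frac12},(\tilde{\varepsilon}_c)^{q-\frac12}\rangle=0$ by Lemma \ref{lem-b} and $\langle\Delta_x(\tilde{\sigma}_c)^{q-\frac12},(\tilde{\varepsilon}_c)^{q-\frac12}\rangle=0$ by \eqref{eq-lemd3} of Lemma \ref{lem-d} with $S=0$. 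This dictates the choice of discrete energy
\[
\mathcal{E}^q=\Vert(\tilde{\varepsilon}_c)^q\Vert^2+\mu\Big(|(\tilde{\varepsilon}_c)^q|_1^2+\tfrac{h^2}{12}\Vert(\tilde{\sigma}_c)^q\Vert^2-\tfrac{h^4}{144}|(\tilde{\sigma}_c)^q|_1^2\Big),
\]
which, by the inequalities of Lemma \ref{lem-a} exactly as in the Remark after Theorem \ref{thm-2}, satisfies $\mathcal{E}^q\ge\Vert(\tilde{\varepsilon}_c)^q\Vert^2\ge0$.

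The step I expect to be the main obstacle is the pair of nonlinear $\Psi$-difference terms. Writing $(\tilde{u}_c)=(u_c)+(\tilde{\varepsilon}_c)$ and $(\tilde{w}_c)=(w_c)+(\tilde{\sigma}_c)$ and exploiting the bilinearity of $\Psi$, one has at level $q-\frac12$ the identity $\Psi((\tilde{u}_c),(\tilde{u}_c))-\Psi((u_c),(u_c))=\Psi((u_c),(\tilde{\varepsilon}_c))+\Psi((\tilde{\varepsilon}_c),(u_c))+\Psi((\tilde{\varepsilon}_c),(\tilde{\varepsilon}_c))$, and likewise for the $h^2$-weighted term with $(\tilde{w}_c)$. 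Pairing with $(\tilde{\varepsilon}_c)^{q-\frac12}$, every summand of the type $\Psi(\,\cdot\,,(\tilde{\varepsilon}_c)^{q-\frac12})$ drops out by the identity $\langle\Psi(v,w),w\rangle=0$ of Lemma \ref{lem-b}, and the surviving pieces $\Psi((\tilde{\varepsilon}_c),(u_c))$, $\Psi((\tilde{\sigma}_c),(u_c))$ and $\Psi((\tilde{\sigma}_c),(\tilde{\varepsilon}_c))$ are bounded via Lemma \ref{lem-e}, the Cauchy--Schwarz and Young inequalities, and the a priori estimates $\Vert(u_c)^q\Vert_\infty\le C$ (established just above the theorem), $|(u_c)^q|_1\le C$ and $h\Vert(w_c)^q\Vert\le C$ (from the conservation Remark), producing a bound of the form $C(\mathcal{E}^q+\mathcal{E}^{q-1})$. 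The source term is trivial, $\langle r^{q-\frac12},(\tilde{\varepsilon}_c)^{q-\frac12}\rangle\le\frac12\Vert r^{q-\frac12}\Vert^2+\frac14(\mathcal{E}^q+\mathcal{E}^{q-1})$. Assembling all estimates yields, for $1\le q\le N_c$, an inequality of the form $\frac{1}{2\tau_c}(\mathcal{E}^q-\mathcal{E}^{q-1})\le C(\mathcal{E}^q+\mathcal{E}^{q-1})+\frac12\Vert r^{q-\frac12}\Vert^2$.

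To finish, I would sum this over $q$ from $1$ to the target index, absorb the $\mathcal{E}^q$ term into the left-hand side under the smallness condition $2C\tau_c<1$ (the analogue of $2C_1\tau_c<1/3$ in Theorem \ref{thm-1}), invoke the discrete Gr\"onwall inequality [\citenum{Sloan1986time}], and then use $\Vert(\tilde{\varepsilon}_c)^q\Vert^2\le\mathcal{E}^q$ together with $\mathcal{E}^0\le C\Vert\zeta\Vert^2$ (here $(\tilde{\varepsilon}_c)^0=\zeta$ and $(\tilde{\sigma}_c)^0$ is recovered from $\zeta$ through \eqref{eq-ptbpbm1b}) to arrive at \eqref{eq-stb1}. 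For Step \Rmnum{3} the argument is genuinely easier: that scheme is linear in $u^k$, so the nonlinear contributions in its perturbation system already appear in the form $\Psi\big((u_f)^{k-\frac12},\tilde{\varepsilon}^{k-\frac12}\big)$ and $\frac{h^2}{2}\Psi\big((w_f)^{k-\frac12},\tilde{\varepsilon}^{k-\frac12}\big)$, both of which vanish identically after pairing with $\tilde{\varepsilon}^{k-\frac12}$ by Lemma \ref{lem-b}; hence no a priori $L^\infty$ control of $u^k$ is even required, and the identical energy/Gr\"onwall mechanism (with $\tau_f$ and $C_3$ in place of $\tau_c$ and $C_1$) gives the second estimate. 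The only delicate bookkeeping in the whole proof is keeping the constants in the $\Psi$-term estimates of Step \Rmnum{1} independent of $h$, which is exactly what the discrete conservation law supplies through $\Vert(u_c)^q\Vert_\infty\le C$.
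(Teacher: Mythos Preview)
Your approach matches the paper's own sketch exactly: the paper instructs one to take the inner product of \eqref{eq-ptbpbm1a} with $(\tilde{\varepsilon}_c)^{q-\frac12}$ and then argue as in Theorem~\ref{thm-1}, which is precisely your plan, and your treatment of Step~\Rmnum{3} (where the $\Psi$ terms vanish outright) is correct and indeed simpler.

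Two small points. First, among your ``surviving pieces'' the term $\langle\Psi((\tilde{\sigma}_c)^{q-\frac12},(\tilde{\varepsilon}_c)^{q-\frac12}),(\tilde{\varepsilon}_c)^{q-\frac12}\rangle$ also vanishes by Lemma~\ref{lem-b}, so only $\Psi((\tilde{\varepsilon}_c),(u_c))$ and $\Psi((\tilde{\sigma}_c),(u_c))$ actually need estimating. Second, your claim $\mathcal{E}^0\le C\Vert\zeta\Vert^2$ does not hold with $C$ independent of $h$, because $\mathcal{E}^0$ contains the term $\mu|\zeta|_1^2$ (and $h^2\Vert(\tilde{\sigma}_c)^0\Vert^2$), neither of which is controlled by $\Vert\zeta\Vert^2$ alone. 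The energy argument genuinely delivers $\Vert(\tilde{\varepsilon}_c)^q\Vert^2\le C\big(\mathcal{E}^0+\sum_{n=1}^{q}\Vert r^{n-\frac12}\Vert^2\big)$, so the right-hand side should carry $\mathcal{E}^0$ (equivalently $\Vert\zeta\Vert^2+\mu|\zeta|_1^2$) rather than $\Vert\zeta\Vert^2$; this is an imprecision already present in the paper's stated theorem, and your proof simply inherits it.
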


\section{Numerical experiments}\label{sec7}
In this section, we will provide some numerical experiments to confirm the effectiveness of TTCD scheme \eqref{eq-pbm5}-\eqref{eq-pbm6}, and choose the standard NCD scheme \eqref{eq-pbm4a}-\eqref{eq-pbm4d} on the fine grid as the reference.
All examples are implemented on a computer with Intel Core i7-12700 CPU and 16GB of RAM,
the software environment is MATLAB R2023a. 

Assume $U_p^k$ is the exact solution of problem \eqref{eq-bbmb}-\eqref{eq-initcond} at point $(x_p,t_k)$, and $u_p^k$ is the numerical solution obtained by the scheme \eqref{eq-pbm6a}-\eqref{eq-pbm6d}.
To evaluate the convergence of the proposed scheme, we define the maximum norm error of the TTCD scheme as follows:
$ \mathrm{Error}_{\infty}^{\mathrm{\scriptscriptstyle TTCD}}\left(\tau_f,h\right)=\max_{\substack{1\leq p\leq M,\\ 0\leq k\leq N_f}} \left|u_{p}^{k}-U_{p}^{k}\right|,$
and
\begin{equation*}
    \mathrm{Rate}^{t}_{\mathrm{\scriptscriptstyle TTCD}} = \log_2\left(\frac{\mathrm{Error}_{\infty}^{\mathrm{\scriptscriptstyle TTCD}}\left(2\tau_f,h\right)}{\mathrm{Error}_{\infty}^{\mathrm{\scriptscriptstyle TTCD}}\left(\tau_f,h\right)}\right), \quad 
    \mathrm{Rate}^{s}_{\mathrm{\scriptscriptstyle TTCD}} = \log_2\left(\frac{\mathrm{Error}_{\infty}^{\mathrm{\scriptscriptstyle TTCD}}\left(\tau_f,2h\right)}{\mathrm{Error}_{\infty}^{\mathrm{\scriptscriptstyle TTCD}}\left(\tau_f,h\right)}\right).
\end{equation*} 
where $\mathrm{Rate}^{t}$ and $\mathrm{Rate}^{s}$ denote the temporal and spatial convergence rates, respectively. For the NCD scheme, the notations $\mathrm{Error}_{\infty}^{\mathrm{\scriptscriptstyle NCD}}$, $\mathrm{Rate}^{t}_{\mathrm{\scriptscriptstyle NCD}}$ and $\mathrm{Rate}^{s}_{\mathrm{\scriptscriptstyle NCD}}$ can be defined similarly. If the exact solution is unknown, the errors and convergence rates are assessed as follows:
\begin{equation*}
    \begin{aligned}
        &\mathrm{Error}_{\infty,t}^{\mathrm{\scriptscriptstyle TTCD}}\left(\tau_f,h\right)=\max_{\substack{1\leq p\leq M,\\ 0\leq k\leq N_f}} \left|u_{p}^{k}\left(\tau_f,h\right)-u_{p}^{2k}\left(\frac{\tau_f}{2},h\right)\right|,  \quad \mathrm{Rate}^{t,\star}_{\mathrm{\scriptscriptstyle TTCD}} = \log_2\left(\frac{\mathrm{Error}_{\infty,t}^{\mathrm{\scriptscriptstyle TTCD}}\left(2\tau_f,h\right)}{\mathrm{Error}_{\infty,t}^{\mathrm{\scriptscriptstyle TTCD}}\left(\tau_f,h\right)}\right), \\
        &\mathrm{Error}_{\infty,s}^{\mathrm{\scriptscriptstyle TTCD}}\left(\tau_f,h\right)=\max_{\substack{1\leq p\leq M,\\ 0\leq k\leq N_f}} \left|u_{p}^{k}\left(\tau_f,h\right)-u_{2p}^{k}\left(\tau_f,\frac{h}{2}\right)\right|,  \quad \mathrm{Rate}^{s,\star}_{\mathrm{\scriptscriptstyle TTCD}} = \log_2\left(\frac{\mathrm{Error}_{\infty,s}^{\mathrm{\scriptscriptstyle TTCD}}\left(\tau_f,2h\right)}{\mathrm{Error}_{\infty,s}^{\mathrm{\scriptscriptstyle TTCD}}\left(\tau_f,h\right)}\right).        
    \end{aligned}
\end{equation*}
Then we define $\mathrm{Error}_{\infty,t}^{\mathrm{\scriptscriptstyle NCD}}\left(\tau_f,h\right)$, $\mathrm{Error}_{\infty,s}^{\mathrm{\scriptscriptstyle NCD}}\left(\tau_f,h\right)$, $\mathrm{Rate}^{t,\star}_{\mathrm{\scriptscriptstyle NCD}}$ , $\mathrm{Rate}^{s,\star}_{\mathrm{\scriptscriptstyle NCD}}$ as the same way.

In this work, we use the fixed point iteration to solve the NCD scheme \eqref{eq-pbm4a}-\eqref{eq-pbm4d}.
The concrete iteration step as follows:
\begin{small}
\begin{equation*}
    \left\{
    \begin{aligned}
        &\frac{1}{\tau_{\eta}}\left( \textbf{u}^{l,m+1}-\textbf{u}^{l-1} \right)-\frac{\mu}{\tau_{\eta}}\left( \textbf{w}^{l,m+1}-\textbf{w}^{l-1} \right)+ \Psi\left(\textbf{u}^{l-\frac{1}{2}},\textbf{u}^{l-\frac{1}{2}}\right) - \frac{h^2}{2}\Psi\left(\textbf{w}^{l-\frac{1}{2}},\textbf{u}^{l-\frac{1}{2}}\right)  \\
        &\quad + \frac{1}{2} \left(\Delta_x \textbf{u}^{l,m+1} +   \Delta_x \textbf{u}^{l-1}\right) - \frac{h^2}{12}\left(\Delta_x \textbf{w}^{l,m+1} + \Delta_x \textbf{w}^{l-1} \right)-\frac{\lambda}{2}\left( \textbf{w}^{l,m+1} + \textbf{w}^{l-1}\right) = 0, \\
        &\Psi\left(\textbf{u}^{l-\frac{1}{2}},\textbf{u}^{l-\frac{1}{2}}\right) = \frac{1}{4}\left(\Psi\left(\textbf{u}^{l,m+1},\textbf{u}^{l,m}\right) + \Psi\left(\textbf{u}^{l,m+1},\textbf{u}^{l-1}\right) + \Psi\left(\textbf{u}^{l-1},\textbf{u}^{l,m+1}\right) + \Psi\left(\textbf{u}^{l-1},\textbf{u}^{l-1}\right)  \right), \\
        &\Psi\left(\textbf{w}^{l-\frac{1}{2}},\textbf{u}^{l-\frac{1}{2}}\right) = \frac{1}{4}\left(\Psi\left(\textbf{w}^{l,m+1},\textbf{u}^{l,m}\right) + \Psi\left(\textbf{w}^{l,m+1},\textbf{u}^{l-1}\right) + \Psi\left(\textbf{w}^{l-1},\textbf{u}^{l,m+1}\right) + \Psi\left(\textbf{w}^{l-1},\textbf{u}^{l-1}\right)  \right), \\
        &\textbf{w}^{l} = \left(\mathcal{I} + \frac{h^2}{12}\delta_{xx}\right)^{-1}\delta_{xx}\textbf{u}^{l}, \quad 1\leq l\leq N_{\eta}, \quad \eta = c,f,
    \end{aligned}
    \right.
\end{equation*}
\end{small}
where $\textbf{u}^{l}=(u^{l}_1,u^{l}_2,\cdots,u^{l}_M)^T$ and $\textbf{w}^{l}=(w^{l}_1,w^{l}_2,\cdots,w^{k}_M)^T$. Here, the number $m$ denotes the $m$-th iteration, $\mathcal{I}$ is a identity operator, and the initial iteration value is $\textbf{u}^{l,0}=\textbf{u}^{l-1}$. In each step, we just need to satisfy the following one of stop conditions
\begin{itemize}
\item Stop tolerance:  $\max\limits_{1\leq p\leq M}|\textbf{u}^{l,m+1}_p - \textbf{u}^{l,m}_p|\leq 10^{-12}$ .
\item Maximum iteration number: 200.
\end{itemize}
Note that $\eta$ denotes the time grid type. According to Remark \ref{rmk-1}, the iteration scheme of NCD scheme on the coarse or fine time grid can be readily obtained.

\begin{exm}[Manufactured Solution]\label{exm-1}
    we consider the equation \eqref{eq-bbmb} possess the exact solution $u(x,t) = e^{t}\sin(\pi x)$, $(x,t)\in[0,2]\times [0,1]$ and the period is $L=2$. Then its source term can be derived as
    \begin{equation}\label{eq-exm1source}
        f(x,t) = \left[1+(\mu+\lambda)\pi^2\right]e^{t}\sin(\pi x) + \frac{\pi}{2}e^{2t}\sin(2\pi x) + \pi e^{t} \cos(\pi x).
    \end{equation}
\end{exm}

In Tables \ref{tab:exm1-timeorder-a} and \ref{tab:exm1-timeorder-b}, we show the discrete maximum norm errors, convergence orders in time and CPU times for the TTCD scheme and standard NCD scheme, where the parameter $(\mu,\lambda)$ is chosen as $(1,1)$ and $(1,0.01)$. The numerical results display that both methods achieve second-order convergence in time, which is consistent with the theoretical analysis. Although the errors of the two methods are of the same order of magnitude,the computational cost of TTCD scheme is obviously less than that of NCD scheme. Especially for the large temporal step-size ratio $\beta_{\tau}$, TTCD scheme can save more running time.

In Table \ref{tab:exm1-spaceorder}, with the fixed coarse grid step $\tau_c=1/2500$ and $\beta_{\tau}=4$, we find that both schemes reach theoretical fourth-order convergence in space for various parameter $(\mu,\lambda)$, with errors that are almost identical. In particular, TTCD scheme clearly saves on calculation costs. Figure \ref{fig-exm1} shows the convergence rate of TTCD scheme for the different parameter $(\mu,\lambda)$, where the second-order temporal  and fourth-order spatial convergence can be observed intuitively. Figure \ref{fig-exm1-cpu} displays that the maximum norm error decreases as the CPU time increases for both schemes. Howerver, under the same accuracy, TTCD scheme requires less CPU time than NCD scheme.
In short, we can conclude  that TTCD scheme reduces the computational cost while maintaining the accuracy of NCD scheme.

\begin{table}[htbp]
\renewcommand{\arraystretch}{1.1}
\centering
\caption{The Maximum norm errors, temporal convergence rates and CPU times for Example \ref{exm-1} with $h=1/600$ and $\beta_{\tau}=2$.}
\label{tab:exm1-timeorder-a}
\resizebox{\textwidth}{!}{
\begin{tabular}{cccccccccc}
\toprule
\multirow{1}{*}{$(\mu,\lambda)$}
&$\tau_c$ & $\tau_f$ & $\mathrm{Error}_{\infty}^{\mathrm{\scriptscriptstyle TTCD}}$ & $\mathrm{Rate}^t_{\mathrm{\scriptscriptstyle TTCD}}$ & CPU(s) & $\mathrm{Error}_{\infty}^{\mathrm{\scriptscriptstyle NCD}}$ & $\mathrm{Rate}^t_{\mathrm{\scriptscriptstyle NCD}}$ & CPU(s) \\
\midrule
\multirow{4}{*}{$(1,1)$}
&1/8 & 1/16 & $4.5684 \times 10^{-04}$ & - & 10.59 & $4.3088 \times 10^{-04}$ & - & 14.90 \\

& 1/16  & 1/32 & $1.1421 \times 10^{-04}$ & 2.0000 & 19.81 & $1.0774 \times 10^{-04}$ & 1.9997 & 25.36 \\

&1/32  & 1/64 & $2.8551 \times 10^{-05}$ & 2.0000 & 36.46 & $2.6935 \times 10^{-05}$ & 2.0000 & 51.27 \\

&1/64 & 1/128 & $7.1381 \times 10^{-06}$ & 1.9999 & 70.32 & $6.7321 \times 10^{-06}$ & 2.0004 & 90.02 \\
\midrule
\multirow{4}{*}{$(1,0.01)$}
& 1/8 & 1/16 & $6.5858 \times 10^{-04}$ & - & 10.62 & $6.0428 \times 10^{-04}$ & - & 16.25 \\

& 1/16  & 1/32 & $1.6448 \times 10^{-04}$ & 2.0014 & 19.82 & $1.5108 \times 10^{-04}$ & 1.9999 & 25.95 \\

&1/32  & 1/64 & $4.1111 \times 10^{-05}$ & 2.0003 & 36.43 & $3.7772 \times 10^{-05}$ & 2.0000 & 48.34 \\

&1/64 & 1/128 & $1.0274 \times 10^{-05}$ & 2.0005 & 70.40 & $9.4449 \times 10^{-06}$ & 1.9997 & 85.74 \\
\bottomrule
\end{tabular}
}
\end{table}

\begin{table}[htbp]
\renewcommand{\arraystretch}{1.1}
\centering
\caption{The Maximum norm errors, temporal convergence rates and CPU times for Example \ref{exm-1} with $h=1/500$ and $\beta_{\tau}=3$.}
\label{tab:exm1-timeorder-b}
\resizebox{\textwidth}{!}{
\begin{tabular}{cccccccccc}
\toprule
\multirow{1}{*}{$(\mu,\lambda)$}
&$\tau_c$ & $\tau_f$ & $\mathrm{Error}_{\infty}^{\mathrm{\scriptscriptstyle TTCD}}$ & $\mathrm{Rate}^t_{\mathrm{\scriptscriptstyle TTCD}}$ & CPU(s) & $\mathrm{Error}_{\infty}^{\mathrm{\scriptscriptstyle NCD}}$ & $\mathrm{Rate}^t_{\mathrm{\scriptscriptstyle NCD}}$ & CPU(s) \\
\midrule
\multirow{4}{*}{$(1,1)$}
&1/6 & 1/18 & $5.4253 \times 10^{-04}$ & - & 7.42 & $3.4047 \times 10^{-04}$ & - & 12.68 \\

& 1/12  & 1/36 & $1.3568 \times 10^{-04}$ & 1.9995 & 13.29 & $8.5128 \times 10^{-05}$ & 1.9998 & 23.65 \\

&1/24  & 1/72 & $3.3920 \times 10^{-05}$ & 2.0000 & 25.11 & $2.1285 \times 10^{-05}$ & 1.9998 & 44.74 \\

&1/48 & 1/144 & $8.4842 \times 10^{-06}$ & 1.9993 & 47.38 & $5.3245 \times 10^{-06}$ & 1.9991 & 76.89 \\
\midrule
\multirow{4}{*}{$(1,0.01)$}
&1/6 & 1/18 & $7.9326 \times 10^{-04}$ & - & 7.47 & $4.7747 \times 10^{-04}$ & - & 12.36 \\

& 1/12  & 1/36& $1.9773 \times 10^{-04}$ & 2.0043 & 13.42 & $1.1937 \times 10^{-04}$ & 1.9999 & 21.86 \\

&1/24  & 1/72 & $4.9396 \times 10^{-05}$ & 2.0011 & 25.38 & $2.9847 \times 10^{-05}$ & 1.9998 & 41.80 \\

&1/48 & 1/144 & $1.2344 \times 10^{-05}$ & 2.0006 & 47.66 & $7.4554 \times 10^{-06}$ & 2.0012 & 72.75 \\
\bottomrule
\end{tabular}
}
\end{table}

\begin{table}[htbp]
\renewcommand{\arraystretch}{1.1}
\centering
\caption{The Maximum norm errors and spatial convergence rates for Example \ref{exm-1} with $\tau_c=1/2500$ and $\beta_{\tau}=4$.}
\label{tab:exm1-spaceorder}
\begin{tabular}{cccccccccc}
\toprule
\multirow{1}{*}{$(\mu,\lambda)$}
& $h$ & $\mathrm{Error}_{\infty}^{\mathrm{\scriptscriptstyle TTCD}}$ & $\mathrm{Rate}^s_{\mathrm{\scriptscriptstyle TTCD}}$& CPU(s)  & $\mathrm{Error}_{\infty}^{\mathrm{\scriptscriptstyle NCD}}$ & $\mathrm{Rate}^s_{\mathrm{\scriptscriptstyle NCD}}$ & CPU(s) \\
\midrule
\multirow{4}{*}{$(1,1)$}
& 1/6 & $1.8117 \times 10^{-03}$ & - & 2.33 & $1.8117 \times 10^{-03}$ & - & 4.21 \\

& 1/12 & $1.1796 \times 10^{-04}$ & 3.9410 & 3.34 & $1.1796 \times 10^{-04}$ & 3.9410 & 5.91 \\

& 1/24 & $7.5077 \times 10^{-06}$ & 3.9738 & 7.53 & $7.5087 \times 10^{-06}$ & 3.9736 & 12.28 \\

& 1/48 & $4.6693 \times 10^{-07}$ & 4.0071 & 21.54 & $4.6800 \times 10^{-07}$ & 4.0040 & 35.70  \\
\midrule
\multirow{4}{*}{$(1,0.01)$}
& 1/6 & $2.1496 \times 10^{-03}$ & -  & 2.35 & $2.1496 \times 10^{-03}$ & - & 4.16 \\

& 1/12 & $1.4475 \times 10^{-04}$ & 3.8924 & 3.28 & $1.4475 \times 10^{-04}$ & 3.8924 & 5.75 \\

& 1/24 & $9.1123 \times 10^{-06}$ & 3.9896 & 7.25 & $9.1139 \times 10^{-06}$ & 3.9894 & 12.09 \\

& 1/48 & $5.7014\times 10^{-07}$ & 3.9984 & 21.68 & $5.7188 \times 10^{-07}$ & 3.9943 & 36.45 \\
\bottomrule
\end{tabular}
\end{table}

\begin{figure}[htpb]
  \centering
  \subfloat[$h=\frac{1}{500}$ and $\beta_{\tau}=3$.]{
      \label{fig-exm1_a}   
      \includegraphics[width=0.4\textwidth]{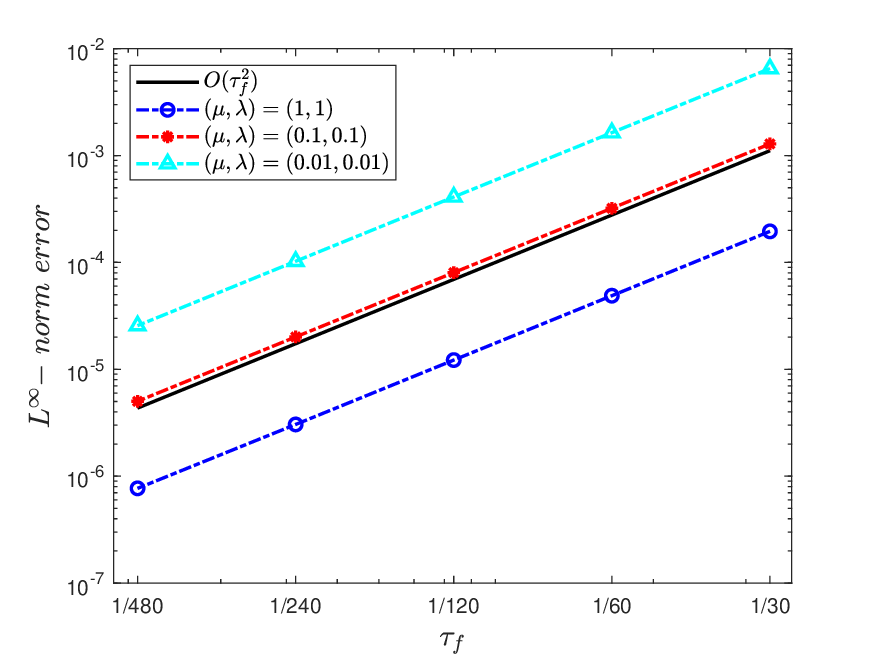}}
  \subfloat[$\tau_f=\frac{1}{6000}$ and $\beta_{\tau}=3$.]{
      \label{fig-exm1_b}
      \includegraphics[width=0.4\textwidth]{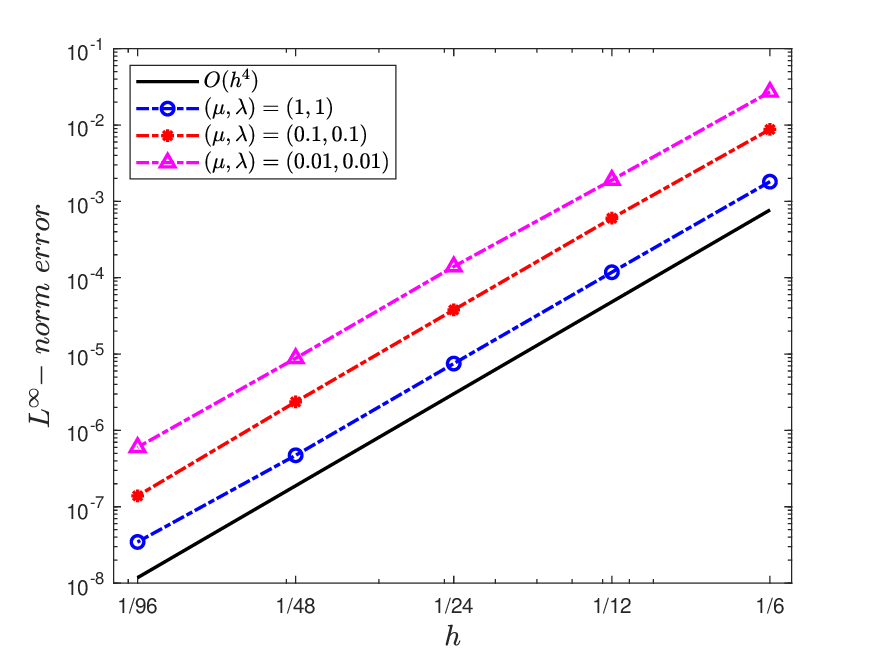}}
  \caption{The Maximum norm error of TTCD scheme for Example \ref{exm-1}.}
  \label{fig-exm1}
\end{figure}

\begin{figure}[htpb]
  \centering 
  \includegraphics[width=0.4\textwidth]{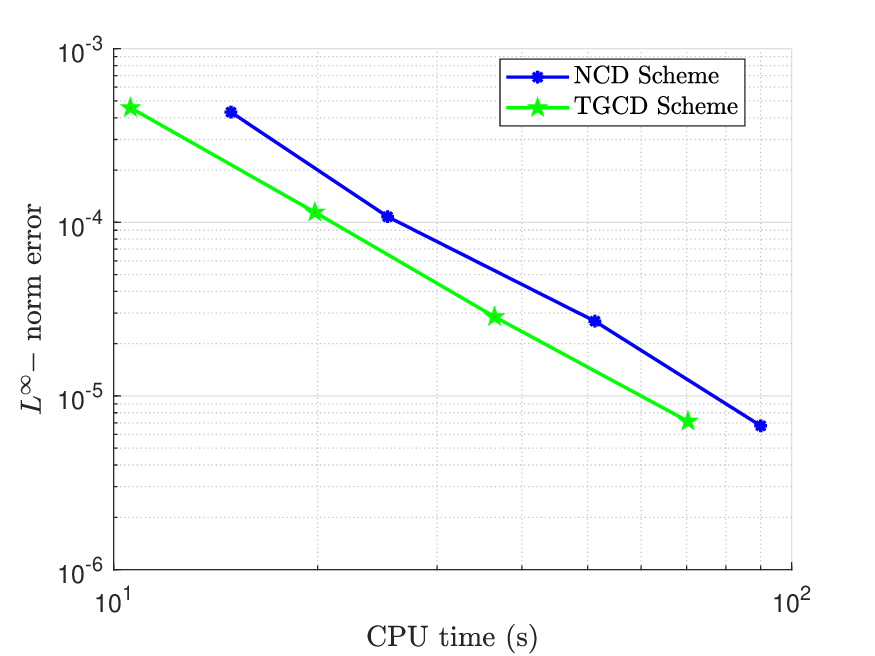}
  \caption{The CPU time for Example \ref{exm-1} with $(\mu,\lambda)=(1,1)$, $h=1/600$ and $\beta_{\tau}=2$.}
  \label{fig-exm1-cpu}
\end{figure}

\begin{exm}[Unknown Solution]\label{exm-2}
    In this example, we consider the following initial value condition 
    \begin{equation}\label{eq-exminitial1}
        u(x,0) = \frac{\sqrt{6}}{3} \mathrm{sech}^2\left(\frac{x}{3}\right),
    \end{equation}
    where the domain is $(x,t)\in[-30,30]\times[0,1]$ and the period is $L=60$. There has no exact solution here.
\end{exm}

\begin{table}[htbp]
\renewcommand{\arraystretch}{1.1}
\centering
\caption{The Maximum norm errors, temporal convergence rates and CPU times for Example \ref{exm-2} with $h=1/20$ and $\beta_{\tau}=4$.}
\label{tab:exm2-timeorder-b}
\resizebox{\textwidth}{!}{
\begin{tabular}{cccccccccc}
\toprule
\multirow{1}{*}{$(\mu,\lambda)$}
&$\tau_c$ & $\tau_f$ & $\mathrm{Error}_{\infty,t}^{\mathrm{\scriptscriptstyle TTCD}}$ & $\mathrm{Rate}^{t,\star}_{\mathrm{\scriptscriptstyle TTCD}}$ & CPU(s) & $\mathrm{Error}_{\infty,t}^{\mathrm{\scriptscriptstyle NCD}}$ & $\mathrm{Rate}^{t,\star}_{\mathrm{\scriptscriptstyle NCD}}$ & CPU(s) \\
\midrule
\multirow{4}{*}{$(1,1)$}
&1/10 & 1/40 & $1.0033 \times 10^{-05}$ & - & 15.83 & $4.4675 \times 10^{-06}$ & - & 30.14 \\

& 1/20  & 1/80 & $2.5088 \times 10^{-06}$ & 1.9997 & 31.61 & $1.1169 \times 10^{-06}$ & 1.9999 & 60.34 \\

&1/40  & 1/160 & $6.2720 \times 10^{-07}$ & 2.0000 & 57.68 & $2.7919 \times 10^{-07}$ & 2.0002 & 99.26 \\

&1/80 & 1/320 & $1.5684 \times 10^{-07}$ & 1.9996 & 115.85 & $6.9852 \times 10^{-08}$ & 1.9989 & 215.16 \\
\midrule
\multirow{4}{*}{$(1,0.01)$}
&1/10 & 1/40 & $1.5300 \times 10^{-05}$ & - & 16.01 & $4.4377 \times 10^{-06}$ & - & 30.69 \\

& 1/20  & 1/80 & $3.8278 \times 10^{-06}$ & 1.9989 & 31.97 & $1.1095 \times 10^{-06}$ & 1.9999 & 61.31 \\

&1/40  & 1/160 & $9.5713 \times 10^{-07}$ & 1.9997 & 57.84 & $2.7738 \times 10^{-07}$ & 1.9999 & 99.71 \\

&1/80 & 1/320 & $2.3923 \times 10^{-07}$ & 2.0003 & 116.33 & $6.9276 \times 10^{-08}$ & 2.0014 & 206.70 \\
\bottomrule
\end{tabular}
}
\end{table}

\begin{table}[htbp]
\renewcommand{\arraystretch}{1.1}
\centering
\caption{The Maximum norm errors and spatial convergence rates for Example \ref{exm-2} with $\tau_c=1/1000$ and $\beta_{\tau}=4$.}
\label{tab:exm2-spaceorder}
\begin{tabular}{cccccccccc}
\toprule
\multirow{1}{*}{$(\mu,\lambda)$}
& $h$ & $\mathrm{Error}_{\infty,s}^{\mathrm{\scriptscriptstyle TTCD}}$ & $\mathrm{Rate}^{s,\star}_{\mathrm{\scriptscriptstyle TTCD}}$& CPU(s)  & $\mathrm{Error}_{\infty,s}^{\mathrm{\scriptscriptstyle NCD}}$ & $\mathrm{Rate}^{s,\star}_{\mathrm{\scriptscriptstyle NCD}}$ & CPU(s) \\
\midrule
\multirow{4}{*}{$(1,1)$}
& 3/4 & $3.1652 \times 10^{-04}$ & - & 6.81 & $3.1652 \times 10^{-04}$ & - & 10.56 \\

& 3/8 & $2.2576 \times 10^{-05}$ & 3.8094 & 18.92 & $2.2576 \times 10^{-05}$ & 3.8095 & 28.30 \\

& 3/16 & $1.4247 \times 10^{-06}$ & 3.9860 & 65.48 & $1.4247 \times 10^{-06}$ & 3.9860 & 96.78 \\

& 3/32 & $8.9877 \times 10^{-08}$ & 3.9866 & 526.54 & $8.9848 \times 10^{-08}$ & 3.9871 & 936.86  \\
\midrule
\multirow{4}{*}{$(1,0.01)$}
& 3/4 & $5.4130 \times 10^{-04}$ & -  & 7.50 & $5.4130 \times 10^{-04}$ & - & 10.84 \\

& 3/8 & $3.9436 \times 10^{-05}$ & 3.7788 & 20.05 & $3.9436 \times 10^{-05}$ & 3.7788 & 29.44 \\

& 3/16 & $2.4927 \times 10^{-06}$ & 3.9837 & 69.66 & $2.4927 \times 10^{-06}$ & 3.9837 & 101.83 \\

& 3/32 & $1.5695 \times 10^{-07}$ & 3.9894 & 691.70 & $1.5698 \times 10^{-07}$ & 3.9891 & 1071.92 \\
\bottomrule
\end{tabular}
\end{table}

\begin{table}[htbp]
\renewcommand{\arraystretch}{1.1}
\centering
\setlength\tabcolsep{18pt}{ 
\caption{For Example \ref{exm-2}, the numerical conservation invariant $E^k$ defined in \eqref{eq-numinvariant} with mesh size $h=1/10$ and $\tau_f=1/1024$. }
\label{tab:exm2-invariant}
\begin{tabular}{ccccc}
\toprule
$t$ & $(\mu,\lambda)=(1,1)$ & $(\mu,\lambda)=(0.1,0.1)$ & $(\mu,\lambda)=(0.01,0.01)$ \\
\midrule
0 & 2.903703684187 & 2.690370368419 & 2.669037036842\\
1 & 2.903703684212 & 2.690370368490 & 2.669037036927\\
2 & 2.903703684227 & 2.690370368653 & 2.669037037266\\
3 & 2.903703684230 & 2.690370368908 & 2.669037038386\\
4 & 2.903703684220 & 2.690370369262 & 2.669037042923\\
5 & 2.903703684196 & 2.690370369724 & 2.669037062762\\
\bottomrule
\end{tabular}}
\end{table}

\begin{figure}[htpb]
  \centering
  \subfloat[$h=\frac{1}{15}$ and $\beta_{\tau}=3$.]{
      \label{fig-exm2_a}   
      \includegraphics[width=0.4\textwidth]{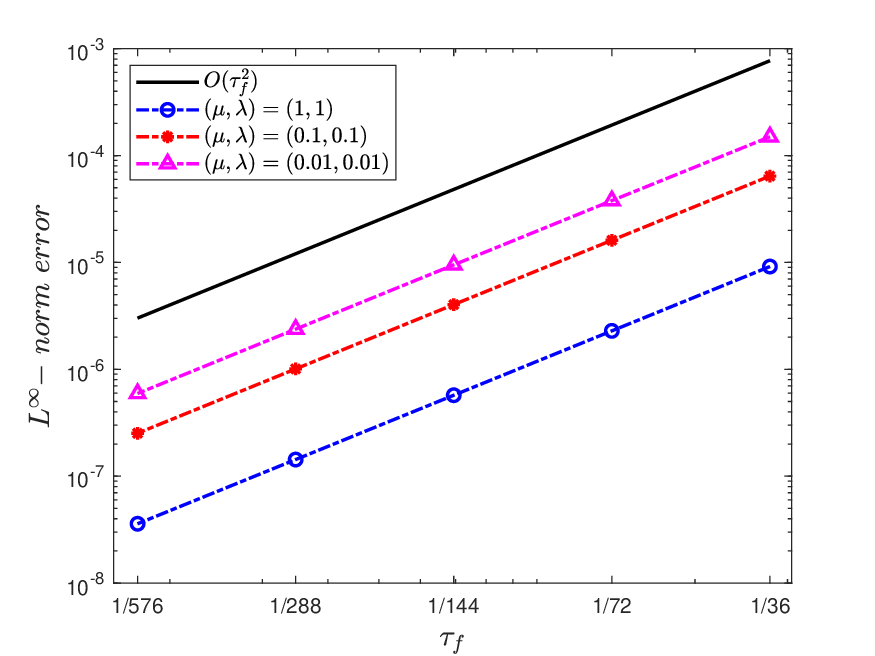}}
  \subfloat[$\tau_f=\frac{1}{1500}$ and $\beta_{\tau}=3$.]{
      \label{fig-exm2_b}
      \includegraphics[width=0.4\textwidth]{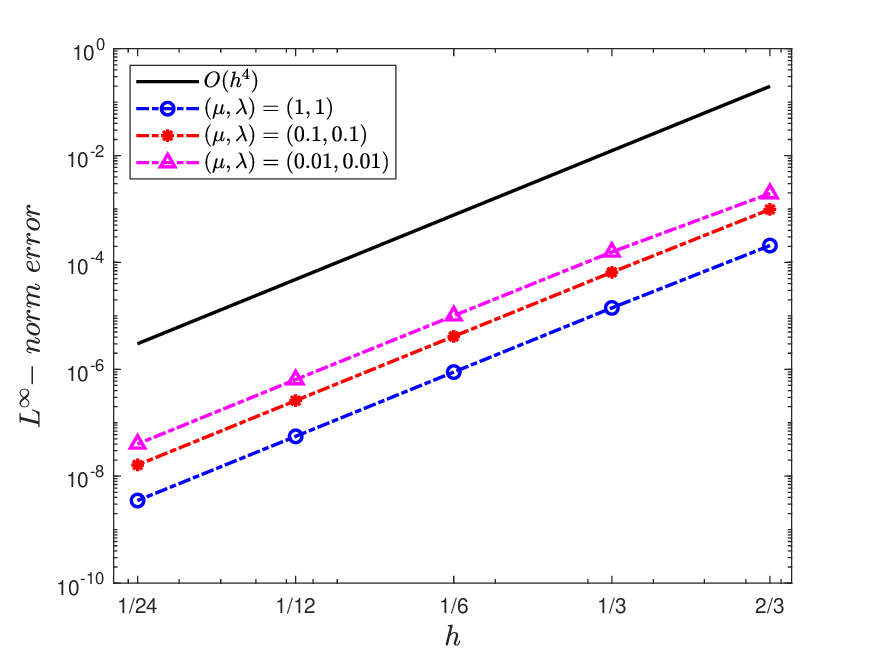}}
  \caption{The Maximum norm error of TTCD scheme for Example \ref{exm-2}.}
  \label{fig-exm2}
\end{figure}
\begin{figure}[htpb]
  \centering 
  \includegraphics[width=0.4\textwidth]{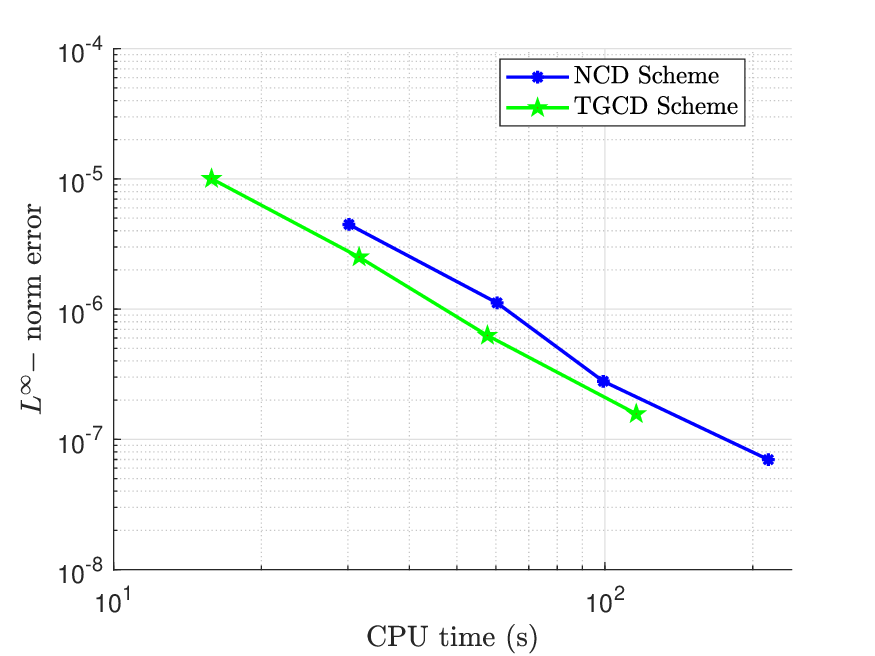}
  \caption{The CPU time for Example \ref{exm-2} with $(\mu,\lambda)=(1,1)$, $h=1/20$ and $\beta_{\tau}=4$.}
  \label{fig-exm2-cpu}
\end{figure}

By observing the Tables \ref{tab:exm2-timeorder-b} and \ref{tab:exm2-spaceorder}, we can see that both schemes achieve the temporal second-order and spatial fourth-order convergence, which verifies the theoretical results further. 
Note that the period is $L=60$ in this example, which means that, compared with Example \ref{exm-1}, we need more space points under the same space step size $h$. Comparing the CPU times of this two methods in Tables \ref{tab:exm2-timeorder-b} and \ref{tab:exm2-spaceorder}, the NCD scheme
is extremely time-consuming, and TTCD scheme can significantly reduce the running time without loss of accuracy. 

In Table \ref{tab:exm2-invariant}, we list the computed values of conservation invariant $E^k$ defined in \eqref{eq-numinvariant}. The results display that TTCD scheme can maintain
the conservation invariants approximately under various parameters $(\mu, \lambda)$, even for the larger space step size $h$.
Figure \ref{fig-exm2} clearly displays the sencond-order temporal and fourth-order spatial convergence of the TTCD scheme, even for the various parameter $(\mu,\lambda)$. In addition, Figure \ref{fig-exm2-cpu} presents the relationship between the maximum norm errors and CPU times, reflecting the fact that the TTCD scheme is more efficient than the NCD scheme.

\section{Conclusion}\label{sec8}
We have proposed a Temporal Two-Grid Compact Difference (TTCD) scheme based on the Crank-Nicolson method and a temporal two-grid algorithm for solving the one-dimensional BBMB equation. Using the energy method, the scheme was shown to achieve second-order accuracy in time and fourth-order accuracy in space in the maximum norm. Furthermore, the conservation property, unique solvability, and stability of the scheme have been rigorously established. Finally, two numerical examples confirm the theoretical results. The comparison of computational outcomes demonstrates that the TTCD scheme reduces CPU time while maintaining the same accuracy as the standard nonlinear compact difference scheme.

\bibliographystyle{alpha}
\bibliography{references}

\end{document}